\providecommand{\U}[1]{\protect\rule{.1in}{.1in}}
\newtheorem{theorem}{Theorem}[section]
\theoremstyle{plain}
\newtheorem{definition}{Definition}
\newtheorem{lemma}{Lemma}[section]
\newtheorem{problem}{Problem}
\newtheorem{remark}{Remark}
\numberwithin{equation}{section}
\begin{document}
\title[Concentration-compactness Principles and its application]{Concentration-compactness principle for Trudinger-Moser inequalities on
Heisenberg Groups and  existence of ground state solutions}
\author{Jungang Li}
\address{Jungang Li\\Department of Mathematics\\ University of Connecticut\\Storrs, CT 06269, USA\\E-mail: jungang.li@uconn.edu}

\author{GUOZHEN LU}
\address{Guozhen Lu\\Department of Mathematics\\University of Connecticut\\Storrs, CT 06269, USA\\E-mail: guozhen.lu@uconn.edu}
\author{MAOCHUN ZHU}
\address{Maochun Zhu\\Faculty of Science\\Jiangsu University\\Zhenjiang, 212013, China\\E-mail: zhumaochun2006@126.com }

\keywords{Trudinger-Moser inequality; Heisenberg group; Concentration-compactness Principles; Mountain-Pass theorem; exponential growth; Q-subLaplacian; ground state solution}
\thanks{The research of the second author was partly supported by a US NSF grant and a Simons Fellowship from the Simons Foundation and the research of the third author was partly supported by Natural Science Foundation of China (11601190), Natural Science Foundation of
Jiangsu Province (BK20160483) and Jiangsu University Foundation Grant (16JDG043).\\Corresponding author: Guozhen Lu, Email: guozhen.lu@uconn.edu}

\subjclass[2010]{46E35;35J92; 35H20}

\begin{abstract}
Let $\mathbb{H}^{n}=\mathbb{C}^{n}\times\mathbb{R}$ be the $n$-dimensional
Heisenberg group, $Q=2n+2$ be the homogeneous dimension of $\mathbb{H}^{n}$.
We extend the well-known concentration-compactness principle on finite domains in the Euclidean spaces of \ P. L. Lions 
to the setting of the Heisenberg group $\mathbb{H}^{n}$. Furthermore, we also obtain the
corresponding concentration-compactness principle for the Sobolev space
$HW^{1,Q}\left(  \mathbb{H}^{n}\right)  $ on the entire Heisenberg group $\mathbb{H}^{n}$.

Our results improve the sharp Trudinger-Moser inequality  on domains of finite measure in $\mathbb{H}^{n}$  by Cohn and the second author  \cite{Lu}    and the corresponding one  on the whole space $\mathbb{H}^n$ by Lam and the second author
  \cite{Lam}. All the proofs of the concentration-compactness principles in the literature even in the Euclidean spaces use the rearrangement argument and the Poly\'a-Szeg\"{o} inequality. Due to the absence of the Poly\'a-Szeg\"{o} inequality   on the Heisenberg group, we will develop a different argument. Our approach is surprisingly simple and general and can be
easily applied to other settings where
symmetrization argument does not work. As an application of the concentration-compactness principle, we establish the
existence of ground state solutions for a class of $Q$- Laplacian subelliptic
equations on $\mathbb{H}^{n}:$%
\[
-\mathrm{div}\left(  \left\vert \nabla_{\mathbb{H}}u\right\vert ^{Q-2}%
\nabla_{\mathbb{H}}u\right)  +V\left(  \xi\right)  \left\vert u\right\vert
^{Q-2}u=\frac{f\left(  u\right)  }{\rho\left(  \xi\right)  ^{\beta}}%
\]
with nonlinear terms $f$ of maximal exponential growth $\exp\left(  \alpha
t^{\frac{Q}{Q-1}}\right)  $ as $t\rightarrow+\infty$.
 \end{abstract}

\maketitle
\section{Introduction}

Let $\Omega\subseteq$ $%
\mathbb{R}
^{n}$ and $W_{0}^{1,q}\left(  \Omega\right)  $ be the usual Sobolev space,
that it, the completion of $C_{0}^{\infty}\left(  \Omega\right)  $ with the
norm%
\[
\left\Vert u\right\Vert _{W^{1,q}\left(  \Omega\right)  }=\left(  \int
_{\Omega}\left(  \left\vert u\right\vert ^{q}+\left\vert \nabla u\right\vert
^{q}\right)  dx\right)  ^{\frac{1}{q}}.
\]
If $1\le q<n$, the classical Sobolev embedding says that $W_{0}^{1,q}\left(
\Omega\right)  \hookrightarrow L^{s}\left(  \Omega\right)  $ for $1\leq s\leq
q^{\ast}$, where $q^{\ast}:=\frac{nq}{n-q}$. When $q=n$, it is known that
\[
W_{0}^{1,n}\left(  \Omega\right)  \hookrightarrow L^{s}\left(  \Omega\right)
\text{ for any }n\leq s<+\infty\text{, }%
\]
but $\ W_{0}^{1,n}\left(  \Omega\right)  \varsubsetneq L^{\infty}\left(
\Omega\right)  $. When $\Omega$ is of finite measure, the analogue of the Sobolev embedding is the well-known
Trudinger's inequality, which was established independently by Yudovi\v{c} \cite{Yu}, Poho\v{z}aev \cite{Po}, and Trudinger \cite{Tru}.  In 1971, Moser   sharpened in \cite{moser} Trudinger's inequality, and proved the following  inequality:
\begin{equation}
\underset{\left\Vert \nabla u\right\Vert _{L^n\left(  \Omega\right)  }\leq
1}{\underset{u\in W_{0}^{1,n}\left(  \Omega\right)  }{\sup}}\int_{\Omega
}e^{\alpha\left\vert u\right\vert ^{\frac{n}{n-1}}}dx<\infty\text{ iff
}\alpha\leq\alpha_{n}=n\omega_{n-1}^{\frac{1}{n-1}}, \label{moser-tru}%
\end{equation}
where $\omega_{n-1}$ is the $n-1$ dimensional surface measure of the unit ball
in $%
\mathbb{R}
^{n}$ and $|\Omega|<\infty$. Inequality (\ref{moser-tru}) is known as the \textit{Trudinger-Moser inequality}.
In 1985, Lions \cite{lions} established the
Concentration-Compactness Principle associated with (\ref{moser-tru}), which
tells us that, if $\left\{  u_{k}\right\}  $ is a sequence of functions in
$W_{0}^{1,n}\left(  \Omega\right)  $\ with $\left\Vert \nabla u_{k}\right\Vert
_{n}=1$ such that $u_{k}\rightharpoonup u$ weakly in $W^{1,n}\left(
\Omega\right)  $, then for any $0<p<M_{n,u}:=\left(  1-\left\Vert \nabla
u\right\Vert _{n}^{n}\right)  ^{-1/\left(  n-1\right)  }$, one has%
\begin{equation}
\underset{k}{\sup}\int_{\Omega}e^{\alpha_{n}p\left\vert u_{k}\right\vert
^{\frac{n}{n-1}}}dx<\infty. \label{lions}%
\end{equation}
This conclusion gives more precise information and is stronger than (\ref{moser-tru}) when
$u_{k}\rightharpoonup u\neq0$ weakly in $W_{0}^{1,n}\left(  \Omega\right)  $.

\medskip

When $\left\vert \Omega\right\vert =+\infty$, the inequality (\ref{moser-tru})
is meaningless. In this case, the first related inequalities have been
considered by D.M. Cao \cite{cao} in the case $N=2$ and for any dimension by
do \'{O} \cite{J.M. do1} and Adachi-Tanaka \cite{Adachi-Tanaka}. For two-weighted subcritical Trudinger-Moser inequalities, see \cite{INW, DongLu}. 
 Note that,
unlike  (\ref{moser-tru}), all these results have been proved in the
subcritical growth case, that is $\alpha<\alpha_{n}$. In \cite{ruf}, Ruf
showed that in the case $N=2$, the exponent $\alpha_{2}=4\pi$ becomes
admissible if the Dirichlet norm $\int_{\Omega}\left\vert \nabla u\right\vert
^{2}dx$ is replaced by $W^{1,2}$ norm $\int_{\Omega}\left(  \left\vert
u\right\vert ^{2}+\left\vert \nabla u\right\vert ^{2}\right)  dx$. Later, Y.X.
Li and Ruf \cite{liruf} established the same critical inequality  as in \cite{ruf} in arbitrary dimensions.
These critical and subcritical inequalities have been proved to be equivalent  in \cite{LLZ}.

\medskip

While there has been much progress for Trudinger-Moser type inequalities and the concentration-compactness phenomenon on the Euclidean spaces, much less is known on the Heisenberg group.   We recall that most
of the proofs for  Trudinger-Moser inequalities in the Euclidean space are
based on the rearrangement argument. When one considers the  Trudinger-Moser
inequalities in the\ subelliptic setting, one often attempts to use the radial
non-increasing rearrangement $u^{\ast}$ of functions $u$. Unfortunately, it is
not true that the $L^{p}$ norm of the subelliptic gradient of the
rearrangement of a function is dominated by the $L^{p}$ norm of the
subelliptic gradient of the function. In other words, the
P\'{o}lya-Szeg\"{o}\ type inequality in the subelliptic setting like%
\begin{equation}
\left\Vert \nabla_{\mathbb{H}^n}u^{\ast}\right\Vert _{L^{p}}\leq\left\Vert
\nabla_{\mathbb{H}^n}u\right\Vert _{L^{p}  } \label{poly}%
\end{equation}
is not available. Actually, from the  work of D. Jerison and J. Lee \cite{Jerison} on sharp $L^2$ to $L^{\frac{2Q}{Q-2}}$ inequality on the Heisenberg group with applications to the solution to the CR Yamabe problem, we
know that this inequality fails to hold for the case $p=2$ in Heisenberg groups.

\medskip

The sharp Trudinger-Moser inequality on Heisenberg groups was due to
Cohn and the second author \cite{Lu}  and has been extended to  the Heisenberg type groups and   Carnot groups in \cite{Lu2} and \cite{Balogh}  and  with  singular weights  in   \cite{Lam2}. 
  Furthermore,  Lam and the
second author developed in \cite{Lam, Lam1} a rearrangement-free argument by considering the level sets of the functions
under consideration, this argument enables them to deduce the global Trudinger-Moser inequalities on the entire space
from the local ones on the level sets (see also \cite{lu-yang1} for adaptation of such an argument). Therefore, both sharp critical and subcritical Trudinger-Moser inequalities are established on the entire Heisenberg group in \cite{Lam, Lam3}.

\medskip

More recently, \v{C}ern\'{y} et al. in \cite{Cerny} discover a new approach to
obtain and sharpen Lions's concentration compactness principles (\ref{lions}) as well as fill in a gap in \cite{lions}. This
approach was further extended  to study the Concentration-compactness
principle for the whole space $\mathbb{R}^{n}$ by do \'{O} et al. in
\cite{do}. Their results can be stated as follows: let $\left\{
u_{k}\right\}  $ be a sequence of functions in $W_{0}^{1,n}\left(
\mathbb{R}^{n}\right)  $\ with $\left\Vert u_{k}\right\Vert _{W^{1,n}\left(
\mathbb{R}^{n}\right)  }=1$ such that $u_{k}\rightharpoonup u$ weakly in
$W^{1,n}\left(  \mathbb{R}^{n}\right)  $, then for any $0<p<\tilde{M}%
_{n,u}:=\left(  1-\left\Vert u\right\Vert _{W^{1,n}\left(  \mathbb{R}%
^{n}\right)  }\right)  ^{-1/\left(  n-1\right)  }$,%
\begin{equation}
\underset{k}{\sup}\int_{%
\mathbb{R}
^{n}}e^{\alpha_{n}p\left\vert u_{k}\right\vert ^{\frac{n}{n-1}}}dx<\infty.
\label{do}%
\end{equation}
Furthermore, $\tilde{M}_{n,u}$ is sharp in the sense that there exists a
sequence $\left\{  u_{k}\right\}  $ satisfying $\left\Vert u_{k}\right\Vert
_{W^{1,n}\left(  \mathbb{R}^{n}\right)  }=1$ and $u_{k}\rightharpoonup u$
weakly in $W^{1,n}\left(  \mathbb{R}^{n}\right)  $ such that the supremum
(\ref{do}) is infinite for $p\geq\tilde{M}_{n,u}$\footnote{The sequence
$\left\{  u_{k}\right\}  $ constructed in \cite{do} cannot show that the
supremum (\ref{do}) is infinite for $p=\tilde{M}_{n,u}$ (see Remark \ref{sharpness})}. We also note a recent work 
on sharp Trudinger-Moser type inequalities in the spirit of Lions' work on the whole spaces \cite{LamLuTang}. 

Nevertheless, we mention that arguments of \cite{Cerny}  and \cite{do} still rely on the Poly\'a-Szeg\"{o} inequality in the Euclidean spaces and such an inequality is not available in the subelliptic setting. 

\medskip

Now, it is fairly natural to ask whether the Concentration-compactness
principles (\ref{lions}) and (\ref{do}) still holds for the subelliptic
setting in spite of its absence of the Poly\'a-Szeg\"{o} inequality in such a setting. In this paper, we will give an affirmative answer to this question.
More precisely, we first prove a concentration-compactness principle for
domains with finite measure on Heisenberg groups (Theorem \ref{Theorem1}), and
then prove the concentration-compactness principle for the Horizontal Sobolev
space $HW^{1,Q}\left(  \mathbb{H}^{n}\right)  $--Theorem \ref{Theorem2} (for
definition of $HW^{1,Q}\left(  \mathbb{H}^{n}\right)  $ see Section
\ref{Hesen}). Theorem \ref{Theorem1} sharpens the Trudinger-Moser inequality
by Cohn and the second author \cite{Lu} and recent one of Lam et al.
\cite{Lam2}, Theorem \ref{Theorem2} improves the sharp Trudinger-Moser
inequality by Lam and the second author \cite{Lam}.

\medskip

In the proof of the Concentration-compactness principles on Heisenberg groups,
we carry out a different  argument from \cite{Cerny}
and \cite{do}. It is worthwhile to note that our approach   can be easily applied to the other subelliptic setting
such as Carnot groups with virtually no modifications.

\medskip As an application of Concentration-compactness principles on
Heisenberg groups, we study the existence of positive ground state solution to
a class of partial differential equations with exponential growth on
$\mathbb{H}^{n}$ of the form:%
\begin{equation}
-\mathrm{div}\left(  \left\vert \nabla_{\mathbb{H}}u\right\vert ^{Q-2}%
\nabla_{\mathbb{H}}u\right)  +V\left(  \xi\right)  \left\vert u\right\vert
^{Q-2}u=\frac{f\left(  u\right)  }{\rho\left(  \xi\right)  ^{\beta}%
}\label{eqa}%
\end{equation}
for any $0\leq\beta<Q$, where $V:\mathbb{H}^{n}\rightarrow\mathbb{R}$ is a
continuous potential, and $f:\mathbb{R}\rightarrow\mathbb{R}$ behaves like
$\exp\left(  \alpha t^{\frac{Q}{Q-1}}\right)  $ when $t\rightarrow\infty$ (for
the meaning of $\nabla_{\mathbb{H}}$ and $\rho\left(  \xi\right)  $ see
Section \ref{Hesen}).

\medskip

We remark that the Trudinger-Moser type inequalities play an
important role in the study of the existence of solutions to nonlinear partial
differential equations of exponential growth in Euclidean spaces. A good deal of works  have been done and  we just quote
some of them  on this subject, which are a good starting point for
further bibliographic references:
\cite{Adimurthi2,Adimurthi1,  CC, Figueiredo1,Figueiredo,J.M. do1,J. M.
do,do, Flucher, lin, Lam4, LiY,  lu-yang, Malchiodi, struwe, yang, zhu}, etc. 
  
Existence and multiplicity of nontrivial nonnegative solutions to the
equations (\ref{eqa}) on the Heisenberg groups have been proved   in a series of papers 
\cite{Lu4, Lam, Lam2, Lam3}. In their argument, they apply the Trudinger-Moser inequality in
the whole space $\mathbb{H}^{n}$  (Lemma \ref{Lemma2} in Section \ref{Hesen})
combined with mountain-pass theorem, minimization and Ekelands variational
principle. 
 Nevertheless,   the existence of {\it ground state} solutions to the sub-elliptic equation  (\ref{eqa}) on the Heisenberg groups  has  not been established yet so far. The concentration-compactness principles on
Heisenberg groups proved in this paper makes it possible to establish such an existence result.

\medskip

This paper is organized as follows: in Section \ref{Hesen} we recall some
basic facts about Heisenberg Groups and state precisely our main results; in
Section \ref{proof of CC} we first prove the concentration compactness
principles for  Trudinger-Moser inequalities on domains with finite measure --
Theorem \ref{Theorem1}, and then we give the proof for the concentration
compactness principles for Horizontal Sobolev space $HW^{1,Q}\left(
\mathbb{H}^{n}\right)  $ --Theorem \ref{Theorem2}. As an application, in
Section \ref{pde}, we consider the equations (\ref{eqa}) and establish the existence of the {\it ground state solutions} and \ prove Theorem
\ref{ground state} by using the minimax argument and Theorem \ref{Theorem2}.

\section{\bigskip Preliminaries and statement of the results\label{Hesen}}

\subsection{Background on Heisenberg groups}

Let $\mathbb{H}^{n}=\mathbb{C}^{n}\times\mathbb{R}$ be the $n$-dimensional
Heisenberg group, whose group structure is given by
\[
\left(  x,t\right)  \circ\left(  x^{\prime},t^{\prime}\right)  =\left(
x+x^{\prime},t+t^{\prime}+2\mathrm{im}\left(  x\cdot\bar{x}^{\prime}\right)
\right)  .
\]
The Lie algebra of $\mathbb{H}^{n}$ is generated by the left invariant vector
fields
\[
X_{i}=\frac{\partial}{\partial x_{i}}+2y_{i}\frac{\partial}{\partial t}%
,Y_{i}=\frac{\partial}{\partial y_{i}}-2x_{i}\frac{\partial}{\partial
t},T=\frac{\partial}{\partial t},
\]
for $i=1,\ldots,n$. These generators satisfy the non-commutative relationship
$\left[  X_{i},Y_{i}\right]  =4\delta_{ij}T$. Moreover, all the commutators of
length greater than two vanish, and thus this is a nilpotent, graded, and
stratified group of step two. \

\medskip

For each real number $r\in%
\mathbb{R}
$, there is a dilation naturally associated with the Heisenberg group
structure which is usually denoted as $\delta_{r}\left(  x,t\right)  =\left(
rx,r^{2}t\right)  $. The Jacobian determinant of $\delta_{r}$ is $r^{Q}$,
where $Q=2n+2$ is the homogeneous dimension of $\mathbb{H}^{n}$.

\medskip

We will use $\xi=(x,t)$ to denote any point $(x,t)\in\mathbb{H}^{n}$, then the
anisotropic dilation structure on $\mathbb{H}^{n}$ introduces a homogeneous
norm $\left\vert \xi\right\vert =\left(  \left\vert x\right\vert ^{4}%
+t^{2}\right)  ^{1/4}$. Let
\[
B_{r}=\left\{  \xi:\left\vert \xi\right\vert <r\right\}
\]
be the metric ball of center $0$ and radius $r$ in $\mathbb{H}^{n}$. Since the
Lebesgue measure in $\mathbb{R}^{2n+1}$ is the Haar measure on $\mathbb{H}%
^{n}$, one has (writing $\left\vert A\right\vert $ for the measure of $A$)
\[
\left\vert B_{r}\right\vert =\omega_{Q}r^{Q},
\]
where $\omega_{Q}$ is a positive constant only depending on $Q$
(see \cite{Lu}).

\medskip

We write $\left\vert \nabla_{\mathbb{H}}u\right\vert $ to express the norm of
the subelliptic gradient of the function $u:\mathbb{H}^{n}\rightarrow
\mathbb{R}:$%
\[
\left\vert \nabla_{\mathbb{H}}u\right\vert =\sqrt{\sum\left(
X_{i}u\right)  ^{2}+\left(  Y_{i}u\right)  ^{2}}.
\]
Let $\Omega$ be an open set in $\mathbb{H}^{n}$ and $p>1$. We define
the\ Horizontal Sobolev Spaces
\[
HW^{1,p}\left(  \Omega\right)  =\left\{  u\in L^{p}\left(  \Omega\right)
:\left\Vert u\right\Vert _{HW^{1,p}\left(  \Omega\right)  }<\infty\right\}
\]
with the norm
\[
\left\Vert u\right\Vert _{HW^{1,p}\left(  \Omega\right)  }=\left(
\int_{\Omega}\left(  \left\vert \nabla_{\mathbb{H}}u\left(  z,t\right)
\right\vert ^{p}+\left\vert u\left(  z,t\right)  \right\vert ^{p}\right)
dxdt\right)  ^{1/p}.
\]
Also, we define the space $HW_{0}^{1,p}\left(  \Omega\right)  $ as the closure
of $C_{0}^{\infty}\left(  \Omega\right)  $ in the norm of $HW^{1,p}\left(
\Omega\right)  $.

\subsection{Some useful known results on Heisenberg groups}

In this subsection, we collect some known results which will be used in the following.

\begin{lemma}[\cite{Lu}]
\label{lu}Let $\rho=\left\vert \xi\right\vert $ be the homogeneous
norm of the element $\xi=\left(  x,t\right)  \in\mathbb{H}^{n}$, and $g\left(
\xi\right)  =g\left(  \rho\right)  $ be a $C^{1}$ radial function on
$\mathbb{H}^{n}$. Then
\[
\left\vert \nabla_{\mathbb{H}}g\left(  \xi\right)  \right\vert =\frac
{g^{\prime}\left(  \rho\right)  }{\rho}\left\vert x\right\vert .
\]

\end{lemma}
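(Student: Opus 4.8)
The statement is purely computational, so the plan is to differentiate directly. Since $g(\xi)=g(\rho)$ depends on $\xi$ only through the gauge $\rho=\left(|x|^{4}+t^{2}\right)^{1/4}$, the chain rule gives $X_{i}g=g'(\rho)\,X_{i}\rho$ and $Y_{i}g=g'(\rho)\,Y_{i}\rho$ for each $i$, hence
\[
\left|\nabla_{\mathbb{H}}g(\xi)\right|^{2}=g'(\rho)^{2}\sum_{i=1}^{n}\Big[(X_{i}\rho)^{2}+(Y_{i}\rho)^{2}\Big],
\]
and everything is reduced to evaluating the sum on the right.

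First I would compute the ordinary partial derivatives of $\rho$. From $\rho^{4}=|x|^{4}+t^{2}$ (with $|x|^{2}=\sum_{j}(x_{j}^{2}+y_{j}^{2})$) one gets $\partial_{x_{i}}\rho=\rho^{-3}|x|^{2}x_{i}$, $\partial_{y_{i}}\rho=\rho^{-3}|x|^{2}y_{i}$, and $\partial_{t}\rho=\tfrac{1}{2}\rho^{-3}t$. Feeding these into $X_{i}=\partial_{x_{i}}+2y_{i}\partial_{t}$ and $Y_{i}=\partial_{y_{i}}-2x_{i}\partial_{t}$ yields
\[
X_{i}\rho=\rho^{-3}\big(|x|^{2}x_{i}+t\,y_{i}\big),\qquad Y_{i}\rho=\rho^{-3}\big(|x|^{2}y_{i}-t\,x_{i}\big).
\]

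The last step is to square and add. In $(X_{i}\rho)^{2}+(Y_{i}\rho)^{2}$ the cross terms $\pm 2|x|^{2}t\,x_{i}y_{i}$ cancel, leaving $\rho^{-6}(x_{i}^{2}+y_{i}^{2})(|x|^{4}+t^{2})=\rho^{-2}(x_{i}^{2}+y_{i}^{2})$; summing over $i=1,\dots,n$ produces $\rho^{-2}|x|^{2}$. Therefore $\left|\nabla_{\mathbb{H}}g(\xi)\right|^{2}=g'(\rho)^{2}|x|^{2}/\rho^{2}$, which is the asserted formula. There is no genuine obstacle here; the only feature worth flagging is the cancellation of the cross terms, which is precisely what makes $\rho$ the natural gauge adapted to the horizontal vector fields (and, if one wants a sign-free statement, the right-hand side should be read with $|g'(\rho)|$).
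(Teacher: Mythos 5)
Your computation is correct: with the paper's conventions $X_{i}=\partial_{x_{i}}+2y_{i}\partial_{t}$, $Y_{i}=\partial_{y_{i}}-2x_{i}\partial_{t}$ and $\rho=(|x|^{4}+t^{2})^{1/4}$, one indeed gets $X_{i}\rho=\rho^{-3}(|x|^{2}x_{i}+ty_{i})$, $Y_{i}\rho=\rho^{-3}(|x|^{2}y_{i}-tx_{i})$, the cross terms cancel, and $\sum_{i}\bigl[(X_{i}\rho)^{2}+(Y_{i}\rho)^{2}\bigr]=|x|^{2}/\rho^{2}$, giving $|\nabla_{\mathbb{H}}g|=|g'(\rho)|\,|x|/\rho$ away from the origin. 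The paper itself gives no proof --- the lemma is quoted from the reference of Cohn and Lu --- and your direct chain-rule verification is exactly the standard argument behind that citation; your remark that the right-hand side should carry $|g'(\rho)|$ (the paper's statement implicitly assumes this, or monotone $g$) is a fair observation.
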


\begin{lemma} [\cite{Lam2}]
\label{Lemma1} Let $\alpha_{Q}=Q\left(  2\pi^{n}\Gamma\left(
\frac{1}{2}\right)  \Gamma\left(  \frac{Q-1}{2}\right)  \Gamma\left(  \frac
{Q}{2}\right)  ^{-1}\Gamma\left(  n\right)  ^{-1}\right)  ^{\frac{1}{Q-1}},$
$0\leq\beta<Q$. There exists a uniform constant $c$ depending only on
$Q,\beta$ such that for all $\Omega\subset\mathbb{H}^{n}$ with $\left\vert
\Omega\right\vert <\infty$ and $\alpha\leq\alpha_{Q,\beta}=\alpha_{Q}\left(
1-\frac{\beta}{Q}\right)  $, one has%
\begin{equation}
\underset{\left\Vert \nabla_{\mathbb{H}}u\right\Vert _{L^{Q}}\leq1}%
{\underset{u\in HW_{0}^{1,Q}\left(  \Omega\right)  }{\sup}}\int_{\Omega}%
\frac{\exp\left(  \alpha u\left(  \xi\right)  ^{\frac{Q}{Q-1}}\right)  }%
{\rho\left(  \xi\right)  ^{\beta}}d\xi<c.\label{moser-lu}%
\end{equation}
The constant $\alpha_{Q,\beta}$ is the best possible in the sense that if
$\alpha>$ $\alpha_{Q,\beta}$, then the supremum above is infinite.
\end{lemma}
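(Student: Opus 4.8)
The plan is to bypass symmetrization and the Poly\'a-Szeg\"{o} inequality entirely and to extract all of the needed sharpness from a potential representation of $u$; by density it suffices to argue for $u\in C_0^\infty(\Omega)$. The first step is the sharp representation formula of Cohn and the second author: one has
\[
|u(\xi)|\ \le\ \int_{\mathbb{H}^n}K(\eta^{-1}\circ\xi)\,\big|\nabla_{\mathbb{H}}u(\eta)\big|\,d\eta ,
\]
where $K$ is a kernel homogeneous of degree $1-Q$ --- essentially $|\nabla_{\mathbb{H}}\Gamma|$ with $\Gamma$ the fundamental solution of the subLaplacian --- whose nonincreasing rearrangement is a \emph{precise} multiple $K^{*}(s)=c_Q\,s^{1/Q-1}$; retaining this exact $c_Q$, rather than the crude majorant $|\cdot|^{1-Q}$, is what yields the sharp $\alpha_Q$, and $c_Q$ is read off from the explicit ball volume $|B_r|=\omega_Q r^{Q}$. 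Writing $f=|\nabla_{\mathbb{H}}u|$, so $\|f\|_{L^Q}\le1$, and viewing the right-hand side as the convolution $K*f$ on $\mathbb{H}^n$, we get $u^{*}\le(K*f)^{*}$ for the nonincreasing rearrangement with respect to Lebesgue measure, and O'Neil's rearrangement inequality for convolutions yields the pointwise bound
\[
u^{*}(t)\ \le\ c_Q\Big(Q\,t^{1/Q}f^{**}(t)+\int_t^{|\Omega|}f^{*}(s)\,s^{1/Q-1}\,ds\Big),\qquad f^{**}(t)=\tfrac1t\int_0^t f^{*} ,
\]
which is the Heisenberg substitute for the Euclidean estimate $|\nabla u^{*}|\le(|\nabla u|)^{*}$.

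The second step disposes of the singular weight already at the level of rearrangements. Since $\xi\mapsto\rho(\xi)^{-\beta}$ is radially nonincreasing, the Hardy-Littlewood inequality gives
\[
\int_\Omega\frac{\exp\!\big(\alpha\,|u(\xi)|^{Q/(Q-1)}\big)}{\rho(\xi)^{\beta}}\,d\xi\ \le\ \omega_Q^{\beta/Q}\int_0^{|\Omega|}\exp\!\big(\alpha\,u^{*}(s)^{Q/(Q-1)}\big)\,s^{-\beta/Q}\,ds .
\]
Parametrizing by radius via $s=\omega_Q\sigma^{Q}$ and then substituting $\sigma=r_0e^{-r}$ with $\omega_Q r_0^{Q}=|\Omega|$, the weight $s^{-\beta/Q}$ combines with the volume element $\sigma^{Q-1}\,d\sigma$ to produce the factor $e^{-(Q-\beta)r}$ in place of the $e^{-Qr}$ of the unweighted case, so the right-hand side becomes a measure-dependent constant times $\int_0^\infty\exp\!\big(\alpha\,\Phi(r)^{Q/(Q-1)}-(Q-\beta)r\big)\,dr$, with $\Phi$ denoting $u^{*}$ re-expressed in the $r$-variable. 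Carrying the same substitution through the O'Neil bound of the first step shows $\Phi(r)\le\kappa_Q\int_0^r\psi(\tau)\,d\tau+O(1)$ for some $\psi\ge0$ with $\int_0^\infty\psi^{Q}\,d\tau\le1$ and a normalization constant $\kappa_Q$ assembled from $c_Q$ and $\omega_Q$ --- i.e. $\Phi$ is, up to a bounded error, the primitive of an $L^{Q}$-function of controlled norm. That is precisely the configuration of Moser's one-dimensional lemma, which yields finiteness (with a bound depending only on the linear exponent and on $\kappa_Q$, hence only on $Q$ and $\beta$) exactly when the exponential constant is at most $\tfrac{Q-\beta}{Q}$ times its value at $\beta=0$; since the latter is $\alpha_Q$ by the choice of $c_Q$, the admissible threshold is $\alpha_Q\big(1-\tfrac{\beta}{Q}\big)=\alpha_{Q,\beta}$. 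One either quotes this lemma or reproves it by splitting $\{r:\alpha\,\Phi(r)^{Q/(Q-1)}\le(Q-\beta)r\}$, on which the integrand is $\le1$ and $r$-integrable, from its complement, on which $\Phi(r)^{Q/(Q-1)}$ is controlled from $\int\psi^{Q}\le1$ together with the elementary bound $(a+b)^{Q/(Q-1)}\le a^{Q/(Q-1)}+C\big(a^{1/(Q-1)}b+b^{Q/(Q-1)}\big)$.

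For the optimality of $\alpha_{Q,\beta}$ I would use the standard concentrating family built from the logarithmic profile $\min\{\log(1/\rho),\log k\}$ of the fundamental solution of the $Q$-subLaplacian on $\mathbb{H}^n$, normalized so that $\|\nabla_{\mathbb{H}}u_k\|_{L^Q}=1$ (the subelliptic gradient being computed through Lemma \ref{lu}): then $u_k$ is of order $(\log k)^{(Q-1)/Q}$ on $B_{1/k}$, so $\exp(\alpha\,u_k^{Q/(Q-1)})$ is of order $k^{\,\alpha Q/\alpha_Q}$ there, while $\int_{B_{1/k}}\rho^{-\beta}\,d\xi\sim k^{\beta-Q}$; hence $\int_{B_1}\rho^{-\beta}\exp(\alpha\,u_k^{Q/(Q-1)})\,d\xi$ is at least of order $k^{\,\alpha Q/\alpha_Q+\beta-Q}$, which tends to $\infty$ as soon as $\alpha>\alpha_{Q,\beta}$. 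The main obstacle throughout --- and the reason one cannot simply transplant the Euclidean proof --- is the sharpness of the constant: absent a Poly\'a-Szeg\"{o} inequality, all of it must be carried by the representation formula, so the decisive ingredient is the exact value of $c_Q$; granting that, getting $\alpha_{Q,\beta}$ rather than something smaller reduces to running the one-dimensional estimate against the weight $s^{-\beta/Q}$ with no loss at the H\"older step. (The bound can be made to depend only on $Q$ and $\beta$ after a harmless normalization of $|\Omega|$ permitted by the dilation invariance of $\|\nabla_{\mathbb{H}}u\|_{L^Q}$; in any case $\Omega$ is fixed in the concentration-compactness application.)
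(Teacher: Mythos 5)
First, for orientation: the paper does not prove Lemma \ref{Lemma1} at all --- it is imported from \cite{Lam2} (which builds on \cite{Lu}), so your proposal must be measured against the argument in those references. Your overall architecture is in fact the one used there: a representation formula bounding $|u|$ by a convolution of $|\nabla_{\mathbb{H}}u|$ with a $(1-Q)$-homogeneous kernel, O'Neil's inequality (valid on the unimodular group $\mathbb{H}^n$), the substitution $s=|\Omega|e^{-r}$ reducing matters to the one-dimensional Adams--Garsia lemma, Hardy--Littlewood to absorb the weight $\rho^{-\beta}$ (which correctly turns $e^{-Qr}$ into $e^{-(Q-\beta)r}$ and hence $\alpha_Q$ into $\alpha_{Q,\beta}$), and the truncated-logarithm family for sharpness. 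Your observation that the local O'Neil term $Qt^{1/Q}f^{**}(t)$ is bounded by $Q\|f\|_{L^Q}$ is right, and your lower-bound computation is consistent with $\alpha_Q=Q\,c_Q^{1/(Q-1)}$, $c_Q=\int_\Sigma|x^{*}|^{Q}d\mu$. (At the critical exponent you must invoke the uniform Adams--Garsia version of the one-dimensional lemma, since the additive $O(1)$ cannot be absorbed without losing the constant; your sketched splitting alone does not give uniformity, but that lemma is quotable.)

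The genuine gap is exactly at the step you declare decisive: the rearrangement constant of the kernel. That constant cannot be ``read off from the ball volume $|B_r|=\omega_Q r^{Q}$'', because the level sets of any admissible kernel are not metric balls --- the kernel carries the anisotropic factor $|x^{*}|=|x|/\rho$. Worse, the kernel you name, $K=|\nabla_{\mathbb{H}}\Gamma|\sim|x^{*}|\rho^{1-Q}$ with $\Gamma$ the fundamental solution of the second-order sub-Laplacian, does not yield the sharp constant: its distribution function brings in $\int_\Sigma|x^{*}|^{Q/(Q-1)}d\mu$ while the normalization of $\Gamma$ brings in $\int_\Sigma|x^{*}|^{2}d\mu$, and a H\"older comparison shows the resulting exponential threshold is strictly below $\alpha_Q=Q\left(\int_\Sigma|x^{*}|^{Q}d\mu\right)^{1/(Q-1)}$ unless $|x^{*}|$ were constant on $\Sigma$, which it is not. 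To reach $\alpha_Q$ one must use the representation attached to the fundamental solution $E=c\log(1/\rho)$ of the $Q$-sub-Laplacian, i.e. $u(\xi)=\int\langle\nabla_{\mathbb{H}}u,\,|\nabla_{\mathbb{H}}E|^{Q-2}\nabla_{\mathbb{H}}E\rangle\,d\eta$ with $c^{Q-1}\int_\Sigma|x^{*}|^{Q}d\mu=1$, whose kernel is $c_Q^{-1}|x^{*}|^{Q-1}\rho^{1-Q}$; only then does $K^{*}(s)=(Qs)^{-(Q-1)/Q}c_Q^{-1/Q}$ and the Adams machinery produce exactly $\alpha_Q$, matching your lower bound (this is the content of the sharp representation formula in \cite{Lu}). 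As written, your argument proves a Trudinger--Moser inequality with a strictly smaller exponent, so the ``best possible $\alpha_{Q,\beta}$'' assertion of the lemma would not follow.
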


\begin{lemma}[\cite{Lam}]
\label{Lemma2} Let $0\leq\beta<Q$. There exists a uniform constant
$c$ depending only on $Q,\beta$ such that for all $\alpha\leq\alpha_{Q,\beta}%
$, one has%
\begin{equation}
\underset{\left\Vert f\right\Vert _{HW^{1,Q}\left(  \mathbb{H}^{n}\right)
}\leq1}{\underset{f\in HW^{1,Q}\left(  \mathbb{H}^{n}\right)  }{\sup}}%
\int_{\mathbb{H}^{n}}\frac{\Phi\left(  \alpha f\left(  \xi\right)  ^{\frac
{Q}{Q-1}}\right)  }{\rho\left(  \xi\right)  ^{\beta}}d\xi<c. \label{unbounded}%
\end{equation}
where $\Phi\left(  t\right)  =e^{t}-\underset{j=0}{\overset{Q-2}{\sum}}%
\frac{t^{j}}{j!}$.\ The constant $\alpha_{Q,\beta}$ is the best possible in
the sense that if $\alpha>\alpha_{Q,\beta}$, then the supremum in the
inequality (\ref{moser-lu}) is infinite.
\end{lemma}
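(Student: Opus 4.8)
The plan is to deduce the global inequality \eqref{unbounded} from the bounded--domain inequality \eqref{moser-lu} of Lemma \ref{Lemma1} by a rearrangement--free level--set argument, so that the P\'olya--Szeg\H{o} inequality \eqref{poly} is never invoked and the only group--specific input is \eqref{moser-lu} itself. Write $\tau=\tfrac{Q}{Q-1}$, fix $f\in HW^{1,Q}(\mathbb{H}^{n})$ with $\|f\|_{HW^{1,Q}(\mathbb{H}^{n})}\le 1$, so that $\|\nabla_{\mathbb{H}}f\|_{L^{Q}(\mathbb{H}^{n})}\le 1$ and $\|f\|_{L^{Q}(\mathbb{H}^{n})}\le 1$, and decompose $\mathbb{H}^{n}=\{|f|\le 1\}\cup\Omega^{*}$ with $\Omega^{*}:=\{|f|>1\}$. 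I would estimate the integral in \eqref{unbounded} on these two pieces separately: the truncation $\Phi(t)=e^{t}-\sum_{j=0}^{Q-2}t^{j}/j!$ is exactly what makes the first piece harmless (it handles the non-compactness of $\mathbb{H}^{n}$, where $f$ is small), while the finiteness of $|\Omega^{*}|$ is exactly what lets Lemma \ref{Lemma1} act on the second.

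On $\{|f|\le 1\}$ the estimate is elementary. Since $\Phi(t)=\sum_{j\ge Q-1}t^{j}/j!\le\frac{t^{Q-1}}{(Q-1)!}e^{t}$, for $0\le t\le\alpha$ one gets $\Phi(t)\le C(Q,\alpha)\,t^{Q-1}$, and as $(Q-1)\tau=Q$ this yields $\Phi(\alpha|f|^{\tau})\le C|f|^{Q}$ on $\{|f|\le1\}$. Splitting further by $\rho\ge1$ and $\rho<1$: on $\{\rho\ge1\}$ bound $\rho^{-\beta}\le1$ and use $\int_{\mathbb{H}^{n}}|f|^{Q}\le\|f\|_{HW^{1,Q}}^{Q}\le1$; on $\{\rho<1\}\cap\{|f|\le1\}$ bound $|f|^{Q}\le1$ and use $\int_{B_{1}}\rho^{-\beta}\,d\xi<\infty$, which holds precisely because $\beta<Q$. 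Hence the contribution of $\{|f|\le1\}$ is bounded by a constant depending only on $Q$ and $\beta$.

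On $\Omega^{*}$, Chebyshev's inequality gives $|\Omega^{*}|\le\|f\|_{L^{Q}}^{Q}\le1<\infty$, so Lemma \ref{Lemma1} is applicable on $\Omega^{*}$. The truncation $v:=(|f|-1)^{+}$ lies in $HW_{0}^{1,Q}(\Omega^{*})$ and, since $|\nabla_{\mathbb{H}}|f||\le|\nabla_{\mathbb{H}}f|$ a.e., $a^{Q}:=\|\nabla_{\mathbb{H}}v\|_{L^{Q}(\Omega^{*})}^{Q}=\int_{\Omega^{*}}|\nabla_{\mathbb{H}}f|^{Q}\le\|\nabla_{\mathbb{H}}f\|_{L^{Q}}^{Q}\le1$. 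On $\Omega^{*}$ we have $|f|=1+v$; bounding $\Phi(t)\le e^{t}$ and using the pointwise estimates $(1+v)^{\tau}\le v^{\tau}+\tau(1+v)^{\tau-1}$, $(1+v)^{\tau-1}\le 2^{\tau-1}(1+v^{\tau-1})$, and Young's inequality $v^{\tau-1}\le\varepsilon v^{\tau}+C_{\varepsilon}$ (legitimate since $\tau-1<\tau$), one gets, for every $\varepsilon>0$,
\[
\Phi\!\big(\alpha|f|^{\tau}\big)\le C_{\varepsilon}\,\exp\!\big(\alpha(1+\varepsilon')v^{\tau}\big)\qquad\text{on }\Omega^{*},
\]
with $\varepsilon'=\varepsilon'(\varepsilon,Q)\to0$ as $\varepsilon\to0$. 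If $a>0$ (the case $a=0$ being trivial), put $w=v/a$, so $\|\nabla_{\mathbb{H}}w\|_{L^{Q}(\Omega^{*})}=1$ and $\alpha(1+\varepsilon')v^{\tau}=\alpha(1+\varepsilon')a^{\tau}w^{\tau}$; whenever $\alpha(1+\varepsilon')a^{\tau}\le\alpha_{Q,\beta}$, Lemma \ref{Lemma1} bounds $\int_{\Omega^{*}}\exp(\alpha(1+\varepsilon')v^{\tau})\rho^{-\beta}\,d\xi$ by the universal constant in \eqref{moser-lu}, and \eqref{unbounded} follows with constant $C_{\varepsilon}\cdot c$.

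The main obstacle is to verify $\alpha(1+\varepsilon')a^{\tau}\le\alpha_{Q,\beta}$ with $\varepsilon$, hence $C_{\varepsilon}$, chosen uniformly in $f$. For subcritical $\alpha<\alpha_{Q,\beta}$ this is immediate: since $a\le1$, fix $\varepsilon$ with $\alpha(1+\varepsilon')\le\alpha_{Q,\beta}$ and then $C_{\varepsilon}$ is universal. The genuinely delicate situation is the critical exponent $\alpha=\alpha_{Q,\beta}$ together with a concentrating $f$, i.e. $a=\|\nabla_{\mathbb{H}}f\|_{L^{Q}(\Omega^{*})}\uparrow1$, which forces $\varepsilon\downarrow0$ and $C_{\varepsilon}\uparrow\infty$, so the crude splitting loses too much. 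To handle it I would exploit the full norm constraint: from $\|\nabla_{\mathbb{H}}f\|_{L^{Q}}^{Q}+\|f\|_{L^{Q}}^{Q}\le1$ and $|f|>1$ on $\Omega^{*}$ one gets $a^{Q}\le 1-|\Omega^{*}|$, and, using $(1+v)^{Q}\ge 1+v^{Q}$, even $a^{Q}\le1-|\Omega^{*}|-\|v\|_{L^{Q}(\Omega^{*})}^{Q}$; thus $a$ near $1$ forces $\Omega^{*}$ to have small measure and $v$ to be small in $L^{Q}(\Omega^{*})$, i.e. $v$ must concentrate like a Moser function, and in that regime one can either run a direct Moser--function/scaling comparison on $\Omega^{*}$ (using Lemma \ref{lu} for the radial profile), or — more cleanly — first establish \eqref{unbounded} for all $\alpha<\alpha_{Q,\beta}$ with uniform constants (as above) and then upgrade to $\alpha=\alpha_{Q,\beta}$ via the equivalence between subcritical and critical Trudinger--Moser inequalities, whose proof is not symmetrization--based and carries over to $\mathbb{H}^{n}$. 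Since Lemma \ref{Lemma1} is the only place the group structure enters, the same scheme works verbatim on Carnot groups.
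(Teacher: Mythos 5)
Your overall scheme is the right one -- it is essentially the rearrangement-free level-set argument of \cite{Lam} (which this paper only cites; it gives no proof of Lemma \ref{Lemma2}, but its proof of Theorem \ref{Theorem2} uses the same device): split $\mathbb{H}^n$ into a sublevel set, where the truncation $\Phi$ and $\beta<Q$ give an elementary bound, and a superlevel set of finite measure, where Lemma \ref{Lemma1} is applied to a shifted function. Your treatment of $\{|f|\le 1\}$ and of every subcritical exponent $\alpha<\alpha_{Q,\beta}$ is correct. But the lemma's actual content is the critical exponent $\alpha=\alpha_{Q,\beta}$, and there your proof stops: you correctly identify that with the level fixed at $1$ the estimate $(1+v)^{\tau}\le(1+\varepsilon)v^{\tau}+C_{\varepsilon}$ forces $\varepsilon\downarrow 0$ and $C_{\varepsilon}\uparrow\infty$ when $\|\nabla_{\mathbb H}f\|_{L^Q(\Omega^*)}\to 1$, but neither of your two fallbacks closes this. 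A ``Moser-function/scaling comparison'' produces lower bounds (sharpness), not the needed uniform upper bound; and the subcritical-to-critical equivalence of \cite{LLZ} cannot be invoked as a soft step here, since it requires the Adachi--Tanaka form of the subcritical inequality (gradient-norm constraint only, an $\|f\|_{L^Q}^Q$ factor, and a constant with a controlled blow-up rate as $\alpha\uparrow\alpha_{Q,\beta}$), which is not what your splitting produces -- your constants $C_\varepsilon$ blow up at an uncontrolled rate, and if they were uniform in $\alpha$ the critical case would already follow by monotone convergence, so the phrase ``with uniform constants (as above)'' is not available.

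The missing idea, which is exactly the device of Li--Ruf and Lam--Lu and which appears verbatim in this paper's proof of Theorem \ref{Theorem2}, is to truncate at a level proportional to $\|f\|_{L^Q}$ rather than at $1$: set $A(f)=2^{-\frac{1}{Q(Q-1)}}\|f\|_{L^Q(\mathbb H^n)}$ and $\Omega(f)=\{|f|>A(f)\}$, so that $|\Omega(f)|\le 2^{\frac1{Q-1}}$ and, with $v=(|f|-A(f))^{+}$, one has $\|\nabla_{\mathbb H}v\|_{L^Q(\Omega(f))}^{Q}\le 1-\|f\|_{L^Q}^{Q}=:1-s$. Then the elementary inequality $(v+A)^{\tau}\le(1+\varepsilon)v^{\tau}+C_Q\,\varepsilon^{-\frac{1}{Q-1}}A^{\tau}$ with the \emph{specific} choice $\varepsilon\sim s$ does two things simultaneously: $(1+\varepsilon)\le(1-s)^{-\frac1{Q-1}}$, so after normalizing $w=v/\|\nabla_{\mathbb H}v\|_{L^Q}$ the exponent is at most $\alpha_{Q,\beta}w^{\tau}$ and Lemma \ref{Lemma1} applies at the critical constant; and $\varepsilon^{-\frac1{Q-1}}A^{\tau}\sim s^{-\frac1{Q-1}}\,s^{\frac1{Q-1}}=O(1)$, so the multiplicative error $e^{\alpha_{Q,\beta}C_Q\varepsilon^{-1/(Q-1)}A^{\tau}}$ stays bounded uniformly in $f$. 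With a fixed level $A=1$ this compensation is lost precisely in the regime you flagged (small $\|f\|_{L^Q}$, concentrating gradient), and your additional observation $a^{Q}\le 1-|\Omega^{*}|-\|v\|_{L^Q(\Omega^{*})}^{Q}$, while true, is not by itself enough to restore it. So as written the proposal proves the subcritical global inequality but leaves the critical case -- the point of the lemma -- unproved.
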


\subsection{\bigskip Statement of the main results}

Now, we are ready to state precisely the main results of this paper.

\begin{theorem}
[Concentration compactness for domains with finite measure]\label{Theorem1}Let
$0\leq\beta<Q$. Assume that $\left\{  u_{k}\right\}  $ is  a sequence in
$HW_{0}^{1,Q}\left(  \Omega\right)  $ with $\left\vert \Omega\right\vert
<\infty$, such that $\left\Vert \nabla_{\mathbb{H}}u_{k}\right\Vert _{Q}=1$
and $u_{k}\rightharpoonup u\neq0$ in $HW_{0}^{1,Q}\left(  \Omega\right)  $.
If
\[
0<p<M_{Q,u}:=\frac{1}{\left(  1-\left\Vert \nabla_{\mathbb{H}}u\right\Vert
_{Q}^{Q}\right)  ^{1/\left(  Q-1\right)  }}%
\]
then%
\[
\underset{k}{\sup}\int_{\Omega}\frac{e^{\alpha_{Q,\beta}pu_{k}^{\frac{Q}{Q-1}%
}}}{\rho\left(  \xi\right)  ^{\beta}}d\xi<\infty.
\]
Moreover, $M_{Q,u}$ is sharp in the sense that there exists a sequence
$\left\{  u_{k}\right\}  $ satisfying $\left\Vert \nabla_{\mathbb{H}%
}u\right\Vert _{Q}^{Q}=1$ and $u_{k}\rightharpoonup u\neq0$ in $HW_{0}%
^{1,Q}\left(  \Omega\right)  $ such that the supremum is infinite for $p\geq
M_{Q,u}$.
\end{theorem}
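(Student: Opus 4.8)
The plan is to mimic, in the subelliptic setting, the rearrangement-free philosophy that Lam and the second author used to pass from local to global Trudinger-Moser inequalities, combined with the key algebraic observation that underlies all concentration-compactness proofs of this type: if $\|\nabla_{\mathbb H}u_k\|_Q=1$ and $u_k\rightharpoonup u$, then by the Brezis-Lieb type splitting $\|\nabla_{\mathbb H}(u_k-u)\|_Q^Q \to 1-\|\nabla_{\mathbb H}u\|_Q^Q=:1-a$, so for $k$ large the ``residual'' part $w_k:=u_k-u$ has subelliptic gradient norm essentially $(1-a)^{1/Q}$, which is strictly less than $1$ since $u\neq 0$. The exponent $M_{Q,u}=(1-a)^{-1/(Q-1)}$ is precisely the reciprocal of $(1-a)^{1/(Q-1)}$, i.e. it is the factor by which we may dilate the test function $w_k/\|\nabla_{\mathbb H}w_k\|_Q$ before the exponent $\alpha_{Q,\beta}$ becomes supercritical. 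So the heart of the matter is to control $\int_\Omega \rho^{-\beta}\exp(\alpha_{Q,\beta}p\,u_k^{Q/(Q-1)})$ by controlling the exponential integral of the normalized residual together with an $L^q$ contribution coming from $u$.

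First I would record the elementary inequality: for $\varepsilon>0$ there is $C_\varepsilon>0$ with
\[
(s+t)^{\frac{Q}{Q-1}} \le (1+\varepsilon)\,s^{\frac{Q}{Q-1}} + C_\varepsilon\, t^{\frac{Q}{Q-1}},\qquad s,t\ge 0,
\]
applied with $s=|w_k|=|u_k-u|$ and $t=|u|$. This gives
\[
\exp\!\big(\alpha_{Q,\beta}p\,|u_k|^{\frac{Q}{Q-1}}\big)
\le \exp\!\big(\alpha_{Q,\beta}p(1+\varepsilon)|w_k|^{\frac{Q}{Q-1}}\big)\cdot
\exp\!\big(\alpha_{Q,\beta}pC_\varepsilon|u|^{\frac{Q}{Q-1}}\big).
\]
Then I would apply H\"older's inequality with conjugate exponents $r,r'$ (with $r$ close to $1$) against the singular weight $\rho^{-\beta}$, splitting $\rho^{-\beta}=\rho^{-\beta/r}\rho^{-\beta/r'}$ and noting that $\beta r < Q$ for $r$ sufficiently close to $1$ since $\beta<Q$. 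The second factor, $\int_\Omega \rho^{-\beta r'}\exp(\alpha_{Q,\beta}p C_\varepsilon r'|u|^{Q/(Q-1)})$, is finite and $k$-independent by Lemma \ref{Lemma1} applied to the fixed function $u$ (after the harmless normalization $u/\|\nabla_{\mathbb H}u\|_Q$, absorbing the constant $p C_\varepsilon$). Hence everything reduces to a uniform bound on
\[
\int_\Omega \frac{1}{\rho^{\beta r}}\,\exp\!\big(\alpha_{Q,\beta}\,p(1+\varepsilon)r\,|w_k|^{\frac{Q}{Q-1}}\big)\,d\xi.
\]

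To bound this last integral I would normalize: set $v_k := w_k/\|\nabla_{\mathbb H}w_k\|_Q$, so $\|\nabla_{\mathbb H}v_k\|_Q=1$, and
\[
p(1+\varepsilon)r\,|w_k|^{\frac{Q}{Q-1}} = p(1+\varepsilon)r\,\|\nabla_{\mathbb H}w_k\|_Q^{\frac{Q}{Q-1}}\,|v_k|^{\frac{Q}{Q-1}}.
\]
Since $\|\nabla_{\mathbb H}w_k\|_Q^{Q/(Q-1)} \to (1-a)^{1/(Q-1)} = 1/M_{Q,u}$ and $p<M_{Q,u}$, for $\varepsilon$ small enough and $r$ close enough to $1$ we have $p(1+\varepsilon)r\,\|\nabla_{\mathbb H}w_k\|_Q^{Q/(Q-1)}\le 1-\delta<1$ for all large $k$. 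Also $\beta r<Q$, so the weight $\rho^{-\beta r}$ still satisfies the hypothesis of the singular Trudinger-Moser inequality after adjusting $\beta$; more precisely I would replace $\beta$ there by $\beta r$ and note $\alpha_{Q,\beta r}=\alpha_Q(1-\beta r/Q)$, and check that $(1-\delta)\alpha_{Q,\beta}\le \alpha_{Q,\beta r}$ for $r$ close to $1$ (since $\alpha_{Q,\beta r}\to\alpha_{Q,\beta}$ as $r\to1^+$ and the extra factor $(1-\delta)$ gives room). Then Lemma \ref{Lemma1} applied to $v_k$ (with finite $|\Omega|$) yields a uniform-in-$k$ bound, completing the proof of the first assertion. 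The main obstacle here is the bookkeeping of the three competing small/large parameters $\varepsilon$ (from the $(s+t)^{Q/(Q-1)}$ inequality), $\delta$ (the gap below $1$ for the residual gradient), and $r-1$ (the H\"older/weight adjustment): one must choose them in the right order so that $p<M_{Q,u}$ is genuinely used and the exponent in the Trudinger-Moser inequality applied to $v_k$ stays at or below critical.

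For the sharpness statement, I would adapt the Moser sequence on the Heisenberg group that witnesses the sharpness of $\alpha_{Q,\beta}$ in Lemma \ref{Lemma1}, i.e. the radial truncated-logarithm functions $\psi_\varepsilon$ concentrating at the origin and normalized so that $\|\nabla_{\mathbb H}\psi_\varepsilon\|_Q=1$; using Lemma \ref{lu} to compute the subelliptic gradient of such radial functions explicitly. Given the target weak limit $u\neq 0$ with $\|\nabla_{\mathbb H}u\|_Q^Q = a<1$, I would set $u_k := \big((1-a)^{1/Q}\,\psi_{\varepsilon_k} + u\big)$ along a sequence $\varepsilon_k\to 0$ chosen so that the supports of $\psi_{\varepsilon_k}$ and the ``bulk'' of $u$ are asymptotically disjoint in the gradient pairing, so that $\|\nabla_{\mathbb H}u_k\|_Q^Q \to (1-a)+a=1$ and $u_k\rightharpoonup u$. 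Then on the small ball where $\psi_{\varepsilon_k}$ is a large constant, $u_k^{Q/(Q-1)} \gtrsim (1-a)^{1/(Q-1)}\psi_{\varepsilon_k}^{Q/(Q-1)}$, so
\[
\alpha_{Q,\beta}\,p\,u_k^{\frac{Q}{Q-1}} \gtrsim p(1-a)^{\frac{1}{Q-1}}\alpha_{Q,\beta}\,\psi_{\varepsilon_k}^{\frac{Q}{Q-1}} = \frac{p}{M_{Q,u}}\,\alpha_{Q,\beta}\,\psi_{\varepsilon_k}^{\frac{Q}{Q-1}},
\]
and for $p\ge M_{Q,u}$ this is $\ge\alpha_{Q,\beta}\psi_{\varepsilon_k}^{Q/(Q-1)}$, which is exactly the borderline/supercritical regime where the computation underlying the sharpness of Lemma \ref{Lemma1} shows the integral $\int \rho^{-\beta}\exp(\cdot)$ blows up as $k\to\infty$. (As the footnote in the introduction already flags, the constructed sequence establishes blow-up for $p>M_{Q,u}$ cleanly and for $p=M_{Q,u}$ only with additional care; I would state it in the same form.) After normalizing $u_k$ by its gradient norm $c_k\to1$ to meet the hypothesis exactly, the divergence persists.
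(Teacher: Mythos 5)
Your sharpness construction is essentially the paper's own (Moser-type sequence $\omega_k$ plus a plateau function $u$ with disjoint gradient supports, $u_k=u+(1-\delta^Q)^{1/Q}\omega_k$), and that part is fine; in fact for Theorem \ref{Theorem1} the computation diverges already at $p=M_{Q,u}$ because the plateau adds a positive constant inside the exponent, so the extra caution you import from the footnote (which concerns the whole-space functional $\tilde M_{Q,u}$, where the $L^Q$ part of the norm is what spoils the endpoint) is not needed here.

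The first and main part of your argument, however, has a genuine gap at its foundation: the claimed Brezis--Lieb type splitting $\left\Vert \nabla_{\mathbb{H}}(u_k-u)\right\Vert_Q^Q\rightarrow 1-\left\Vert \nabla_{\mathbb{H}}u\right\Vert_Q^Q$ does not follow from $u_k\rightharpoonup u$ in $HW_0^{1,Q}$. Brezis--Lieb requires almost everywhere convergence of the quantities whose $L^Q$ norms are being split, i.e.\ here $\nabla_{\mathbb{H}}u_k\rightarrow\nabla_{\mathbb{H}}u$ a.e., and weak convergence in $HW^{1,Q}$ gives a.e.\ convergence of the functions but not of their horizontal gradients. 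Since $Q=2n+2>2$ there is no Hilbert-space identity to fall back on, and oscillating-gradient sequences show that $\limsup_k\left\Vert \nabla_{\mathbb{H}}(u_k-u)\right\Vert_Q^Q$ can genuinely exceed $1-\left\Vert \nabla_{\mathbb{H}}u\right\Vert_Q^Q$ (the cross terms in $\int|\nabla_{\mathbb{H}}u+\nabla_{\mathbb{H}}w_k|^Q$ need not be asymptotically nonnegative). This is precisely the subtlety in Lions's original argument addressed in \cite{Cerny}, so your reduction ``everything reduces to a Trudinger--Moser bound for the normalized residual $w_k/\|\nabla_{\mathbb{H}}w_k\|_Q$'' is not justified. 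The paper circumvents this by never subtracting the weak limit: it truncates at a level $L$, writes $u_k=T^Lu_k+T_Lu_k$ with $T^L(u)=\min\{L,u\}$, uses that $\nabla_{\mathbb{H}}T^Lu_k$ and $\nabla_{\mathbb{H}}T_Lu_k$ have disjoint supports so the gradient norm splits \emph{exactly} (no cross terms), and then applies weak lower semicontinuity only to $T^Lu_k\rightharpoonup T^Lu$ together with a choice of $L$ pinned to the ratio $(\bar p_1/M_{Q,u})^{Q-1}$ to reach a contradiction; the exponential integral is controlled only on the level set $\{u_k\ge L\}$ via Lemma \ref{Lemma1}, with the complement trivially bounded since $|\Omega|<\infty$. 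A secondary, fixable point: bounding the factor $\int_\Omega\rho^{-\beta r'}\exp\left(\alpha_{Q,\beta}pC_\varepsilon r'|u|^{Q/(Q-1)}\right)d\xi$ for the fixed limit $u$ cannot be done by simply normalizing $u$ and invoking Lemma \ref{Lemma1}, because the constant $pC_\varepsilon r'$ pushes the exponent above the critical threshold; one needs the separate (standard, but to-be-proved) fact that a single fixed function in $HW_0^{1,Q}(\Omega)$ has finite singular exponential integral for every constant, via a decomposition into a bounded part and a part with small gradient norm.
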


\begin{theorem}
[Concentration compactness for $HW^{1,Q}\left(  \mathbb{H}^{n}\right)  $%
]\label{Theorem2}Let $0\leq\beta<Q$. \bigskip Assume that $\left\{
u_{k}\right\}  $ is a sequence in $HW^{1,Q}\left(  \mathbb{H}^{n}\right)  $
such that $\left\Vert u_{k}\right\Vert _{HW^{1,Q}\left(  \mathbb{H}%
^{n}\right)  }^{Q}=1$ and $u_{k}\rightharpoonup u\neq0$ in $HW^{1,Q}\left(
\mathbb{H}^{n}\right)  $. If
\[
0<p<\tilde{M}_{Q,u}:=\frac{1}{\left(  1-\left\Vert u\right\Vert _{HW^{1,Q}%
\left(  \mathbb{H}^{n}\right)  }^{Q}\right)  ^{1/\left(  Q-1\right)  }},
\]
then%
\begin{equation}
\underset{k}{\sup}\int_{\mathbb{H}^{n}}\frac{\Phi\left(  \alpha_{Q,\beta
}pu_{k}^{\frac{Q}{Q-1}}\right)  }{\rho\left(  \xi\right)  ^{\beta}}d\xi
<\infty,\label{suprem}%
\end{equation}
where $\Phi\left(  t\right)  =e^{t}-\underset{j=0}{\overset{Q-2}{\sum}}%
\frac{t^{j}}{j!}$. Furthermore, $\tilde{M}_{Q,u}$ is sharp in the sense that
there exists a sequence $\left\{  u_{k}\right\}  $ satisfying $\left\Vert
u_{k}\right\Vert _{HW^{1,Q}\left(  \mathbb{H}^{n}\right)  }^{Q}=1$ and
$u_{k}\rightharpoonup u\neq0$ in $HW^{1,Q}\left(  \mathbb{H}^{n}\right)  $
such that the supremum is infinite for $p>\tilde{M}_{Q,u}.$
\end{theorem}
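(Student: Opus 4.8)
The plan is to reduce the statement on the whole Heisenberg group $\mathbb{H}^n$ to the finite-measure statement (Theorem \ref{Theorem1}) together with a sharp decay estimate on the ``tail'' of the integral, in the spirit of the level-set argument of Lam and the second author \cite{Lam,Lam1}. First I would fix $0<p<\tilde M_{Q,u}$ and choose $q>1$ very close to $1$ so that $pq < \tilde M_{Q,u}$ still holds; the point of the extra factor $q$ is to absorb lower-order errors via Hölder's inequality at the end. Since $\|u_k\|_{HW^{1,Q}(\mathbb{H}^n)}^Q=1$ and $u_k\rightharpoonup u$, I would write $\|\nabla_{\mathbb{H}}u_k\|_Q^Q = 1-\|u_k\|_Q^Q \to 1-\|u\|_Q^Q =: \tau$ (along a subsequence; by Rellich-type local compactness and the weak convergence one controls $\|u_k\|_Q$), and note $\|\nabla_{\mathbb{H}}u\|_Q^Q \le \|u\|_{HW^{1,Q}}^Q$ — here one must be a bit careful because $\tilde M_{Q,u}$ is defined through $\|u\|_{HW^{1,Q}}^Q$, not through $\|\nabla_{\mathbb{H}}u\|_Q^Q$, so the finite-domain exponent $M_{Q,u}$ is in general \emph{larger} than $\tilde M_{Q,u}$, which is exactly what gives us room.

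Next I would split $\mathbb{H}^n = B_R \cup (\mathbb{H}^n\setminus B_R)$ for a large radius $R$. On the ball $B_R$, which has finite measure, I would renormalize: set $v_k = u_k\chi_{B_R}$-type truncations (more precisely, work with $u_k$ restricted to $B_R$ and compare its subelliptic Dirichlet norm on $B_R$ to $1$), and apply Theorem \ref{Theorem1} with exponent $pq$. The hypothesis $pq<\tilde M_{Q,u}\le M_{Q,u}$ guarantees the supremum of $\int_{B_R}\rho^{-\beta}\exp(\alpha_{Q,\beta}\,pq\,|u_k|^{Q/(Q-1)})\,d\xi$ is finite; since $\Phi(t)\le e^t$, the same bound controls the $\Phi$-integral over $B_R$. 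The technical nuisance here is that $\|\nabla_{\mathbb{H}}u_k\|_{L^Q(B_R)}$ need not equal $1$, so I would instead apply Theorem \ref{Theorem1} to $w_k := u_k/\|\nabla_{\mathbb{H}} u_k\|_{L^Q(B_R)}$ and use that, for the weak limit, $\|\nabla_{\mathbb{H}}u\|_{L^Q(B_R)}^Q/\|\nabla_{\mathbb{H}}u_k\|_{L^Q(B_R)}^Q$ stays bounded away from the critical value — this is where the strict inequality $p<\tilde M_{Q,u}$ is consumed, and it is the step requiring the most care.

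For the exterior region $\mathbb{H}^n\setminus B_R$, I would use the elementary bound $\Phi(t)\le t^{Q-1}e^{t}$ (valid since $\Phi$ starts at the $(Q-1)$st power term) to write $\Phi(\alpha_{Q,\beta}p\,|u_k|^{Q/(Q-1)}) \le C\,|u_k|^{Q}\,\exp(\alpha_{Q,\beta}p\,|u_k|^{Q/(Q-1)})$, then apply Hölder with exponents $q'$ and $q$: the factor $|u_k|^{Q}\,\rho^{-\beta/q'}$ is integrated against the $L^{q'}$ norm, controlled by $\|u_k\|_{HW^{1,Q}}$ via the subcritical Trudinger–Moser estimate (Lemma \ref{Lemma2}) applied on the tail where $|u_k|$ is small, while the exponential factor raised to the $q$ power is handled by the global inequality of Lemma \ref{Lemma2} since $\alpha_{Q,\beta}pq\,|u_k|^{Q/(Q-1)} \le \alpha_{Q,\beta}\,|u_k|^{Q/(Q-1)}$ once $\|u_k\|_{HW^{1,Q}}\le 1$ and $pq<1$... no — $pq$ need not be $<1$, so on the exterior I must instead exploit that $\|u_k\|_{HW^{1,Q}(\mathbb{H}^n\setminus B_R)}$ can be made small uniformly in $k$ is \emph{false}; rather I use $\Phi(t)/t \to$ integrable control and the fact that on the tail $|u_k|\le 1$ outside a large ball (by $L^Q$ boundedness and Chebyshev, the set where $|u_k|>1$ has uniformly bounded measure, and there $\Phi$ is again dominated as in Theorem \ref{Theorem1}). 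Combining the two regions and letting the Hölder parameter $q\downarrow 1$ yields \eqref{suprem}. Finally, for sharpness I would transplant the extremal-type sequence to $\mathbb{H}^n$: take $u$ a fixed nonzero function with $\|u\|_{HW^{1,Q}}<1$ supported away from the origin, add to it scaled and translated Moser-type bubbles $\psi_k$ concentrating at a point, arranged so that $\|u+\psi_k\|_{HW^{1,Q}}^Q = 1$ and $u+\psi_k\rightharpoonup u$, and compute directly (using Lemma \ref{lu} for the subelliptic gradient of radial functions, as in \cite{Lu,Lam}) that $\int_{\mathbb{H}^n}\rho^{-\beta}\Phi(\alpha_{Q,\beta}p|u+\psi_k|^{Q/(Q-1)})\,d\xi\to\infty$ for $p>\tilde M_{Q,u}$; the main obstacle throughout is the bookkeeping in the interior step, ensuring the renormalized Dirichlet norms on $B_R$ converge to the right quantity so that Theorem \ref{Theorem1} applies with an exponent strictly below its own critical threshold.
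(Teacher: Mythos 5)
There is a genuine gap, and it sits exactly at the step you yourself flag as ``requiring the most care.'' First, your comparison of the two thresholds is backwards: since $\left\Vert u\right\Vert _{HW^{1,Q}}^{Q}=\left\Vert \nabla_{\mathbb{H}}u\right\Vert _{Q}^{Q}+\left\Vert u\right\Vert _{Q}^{Q}\geq\left\Vert \nabla_{\mathbb{H}}u\right\Vert _{Q}^{Q}$, one has $\tilde{M}_{Q,u}\geq M_{Q,u}$ (strictly, because $u\neq0$), not $\tilde{M}_{Q,u}\leq M_{Q,u}$. So the hypothesis $p<\tilde{M}_{Q,u}$ gives you no ``room'' at all for an application of Theorem \ref{Theorem1}: for $M_{Q,u}\leq p<\tilde{M}_{Q,u}$ the exponent is already beyond the finite-domain threshold, and Theorem \ref{Theorem2} is stronger than Theorem \ref{Theorem1} precisely because the $L^{Q}$ mass of $u_{k}$ must also be made to eat into the unit budget. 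Second, the reduction to a fixed ball $B_{R}$ cannot invoke Theorem \ref{Theorem1} (nor Lemma \ref{Lemma1}) for the restrictions $u_{k}|_{B_{R}}$, because these are not in $HW_{0}^{1,Q}(B_{R})$; a Dirichlet-norm Trudinger--Moser statement fails badly for functions with nonzero boundary values (think of a near-constant function on $B_{R}$ with tiny gradient norm), and renormalizing by $\left\Vert \nabla_{\mathbb{H}}u_{k}\right\Vert _{L^{Q}(B_{R})}$ only aggravates this. The paper avoids both problems at once: it works on the level sets $\Omega(u_{k})=\{u_{k}>A(u_{k})\}$ and $\Omega_{L}^{k}=\{u_{k}\geq L\}$, whose measure is uniformly bounded by Chebyshev thanks to the $L^{Q}$ bound, replaces $u_{k}$ there by the shifted functions $v_{k}=u_{k}-L$ (which are admissible for Lemma \ref{Lemma1} on those sets), and in the key norm accounting keeps the term $\int_{\mathbb{H}^{n}}|T^{L}u_{k}|^{Q}d\xi$ alongside $\int|\nabla_{\mathbb{H}}T^{L}u_{k}|^{Q}d\xi$ and $\int|\nabla_{\mathbb{H}}T_{L}u_{k}|^{Q}d\xi$, so that the deficit which appears after the lower-semicontinuity step is $1-\left\Vert T^{L}u\right\Vert _{HW^{1,Q}}^{Q}$ rather than the gradient-only deficit; this is how the full threshold $\tilde{M}_{Q,u}$ is reached. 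Your proposal contains no mechanism that produces the $HW^{1,Q}$-deficit rather than the Dirichlet deficit. In addition, your claim that ``$\left\Vert u_{k}\right\Vert _{Q}$ is controlled by Rellich-type local compactness'' is unjustified on the whole group (mass may escape to infinity; only local compactness holds), and, more importantly, you never treat the case $\left\Vert u\right\Vert _{HW^{1,Q}}=1$, where $\tilde{M}_{Q,u}=\infty$ and no finite-domain threshold argument can apply; the paper handles it separately via uniform convexity and Radon's theorem (so $u_{k}\to u$ strongly), the domination Lemma \ref{do1}, and a rearrangement/one-dimensional estimate for the single dominating function.

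The tail estimate and the sharpness construction are essentially sound in outline: outside the set where $u_{k}$ exceeds a level comparable to $\left\Vert u_{k}\right\Vert _{Q}$, the bound $\Phi(t)\lesssim t^{Q-1}e^{t}$ does reduce the integral to an $L^{Q}$ bound as in \cite{Lam}, and your ``fixed profile plus normalized Moser bubble'' sequence is the same construction as the paper's (which, note, yields divergence only for $p>\tilde{M}_{Q,u}$; the endpoint $p=\tilde{M}_{Q,u}$ is left open, see Remark \ref{sharpness}). But as written the central interior step rests on a false inequality between $M_{Q,u}$ and $\tilde{M}_{Q,u}$ and on an inapplicable use of Theorem \ref{Theorem1}, so the proof does not go through without adopting something like the paper's truncation and level-set bookkeeping.
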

The following natural question  still remains open at this time.

\begin{problem} Does (\ref{suprem}) still hold when $p=\tilde{M}_{Q,u}$?
\end{problem}
Now, Let us give the definition of the ground state\ solution of (\ref{eqa}):

\begin{definition}
[Ground state\ solution]A function $u$ is said to be the ground
state\ solution of (\ref{eqa}), if $u$ is positive and minimizes the energy
functional
 associated
 to the equation (\ref{eq}) defined by%
\begin{equation*}
\newline J\left(  u\right)  =\frac{1}{Q}\int_{\mathbb{H}^{n}}\left(
\left\vert \nabla_{\mathbb{H}}u\right\vert ^{Q}+V\left(  \xi\right)
\left\vert u\right\vert ^{Q}\right)  d\xi-\int_{\mathbb{H}^{n}}\frac{F\left(
u\right)  }{\rho\left(  \xi\right)  ^{\beta}}d\xi
\end{equation*}
 within the set of nontrivial solutions of (\ref{eqa}).
\end{definition}
For the equation (\ref{eq}), we obtain the following
\begin{theorem}
\label{ground state} Under the hypotheses of (H1) and (H2) in Section
\ref{pde}, the \ $Q-$sub-Laplacian equations (\ref{eqa}) has a positive ground
state solution.
\end{theorem}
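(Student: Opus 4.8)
The plan is to run a mountain-pass scheme, with Theorem~\ref{Theorem2} supplying the compactness that is otherwise lost on the unbounded domain $\mathbb{H}^n$. Work in the Banach space $E$ of all $u\in HW^{1,Q}(\mathbb{H}^n)$ with $\int_{\mathbb{H}^n}V|u|^{Q}\,d\xi<\infty$, normed by $\|u\|_E=(\int_{\mathbb{H}^n}(|\nabla_{\mathbb{H}}u|^{Q}+V|u|^{Q})\,d\xi)^{1/Q}$. Hypothesis (H1) guarantees $\|\cdot\|_{HW^{1,Q}(\mathbb{H}^n)}\le C\|\cdot\|_E$, so that Lemma~\ref{Lemma2} applies to elements of $E$, and also the compact embeddings $E\hookrightarrow L^{q}(\mathbb{H}^n)$ for $Q\le q<\infty$ needed to control the behaviour at infinity. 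Using the exponential growth bound on $f$ from (H2) together with Lemma~\ref{Lemma2} one verifies $J\in C^{1}(E,\mathbb{R})$; the subcriticality $f(t)=o(t^{Q-1})$ near $0$ and the Ambrosetti--Rabinowitz condition $0<\theta F(t)\le tf(t)$ with $\theta>Q$ from (H2) then produce the mountain-pass geometry ($J(0)=0$, $J\ge\delta>0$ on a small sphere, $J(e)<0$ for some $e$), and we let $c>0$ be the associated min-max level.

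The crucial preliminary is the energy bound $c<\frac{1}{Q}(\alpha_{Q,\beta}/\alpha_{0})^{Q-1}$, $\alpha_{0}$ being the exponential growth rate of $f$. One proves it by inserting a family of Moser-type concentrating functions on $\mathbb{H}^n$ --- radial truncated logarithmic functions whose subelliptic gradient is computed through Lemma~\ref{lu} --- into the min-max and invoking the quantitative lower bound on $\liminf_{t\to\infty}tf(t)\exp(-\alpha_{0}t^{Q/(Q-1)})$ contained in (H2). The mountain-pass theorem then yields a Palais--Smale sequence $\{u_{k}\}\subset E$ at level $c$; by the Ambrosetti--Rabinowitz condition $\{u_{k}\}$ is bounded in $E$, so along a subsequence $u_{k}\rightharpoonup u$ in $E$, $u_{k}\to u$ in $L^{q}_{\mathrm{loc}}$ and a.e., and $\|u_{k}\|_{E}^{Q}\to\lambda^{Q}$ with $\lambda>0$.

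The heart of the proof --- and the only point at which Theorem~\ref{Theorem2} is used --- is to show that $u$ is a \emph{nontrivial} critical point of $J$. Testing $J'(u_{k})\to0$ against $u_{k}$ and using (H2) gives $\sup_{k}\int_{\mathbb{H}^n}f(u_{k})u_{k}/\rho^{\beta}\,d\xi<\infty$; then, using the a.e.\ convergence $\nabla_{\mathbb{H}}u_{k}\to\nabla_{\mathbb{H}}u$ (from the standard monotonicity argument for the $Q$-sub-Laplacian, testing against $(u_{k}-u)$ times a cutoff) together with a Brezis--Lieb type splitting, the level bound is combined with Theorem~\ref{Theorem2}, applied to the normalized sequence $u_{k}/\|u_{k}\|_{HW^{1,Q}(\mathbb{H}^n)}$ (whose weak limit is nonzero once $u\ne0$), to show both that $u\ne0$ and that one may pick $p>1$ with $\sup_{k}\int_{\mathbb{H}^n}\Phi(\alpha_{Q,\beta}p\,u_{k}^{Q/(Q-1)}/\|u_{k}\|_{HW^{1,Q}}^{Q/(Q-1)})/\rho^{\beta}\,d\xi<\infty$ and $\alpha_{0}\limsup_{k}\|u_{k}\|_{HW^{1,Q}}^{Q/(Q-1)}<\alpha_{Q,\beta}p$. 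This makes $\{f(u_{k})/\rho^{\beta}\}$ uniformly integrable on bounded sets, so $f(u_{k})/\rho^{\beta}\to f(u)/\rho^{\beta}$ and $F(u_{k})/\rho^{\beta}\to F(u)/\rho^{\beta}$ in $L^{1}_{\mathrm{loc}}$ (and, using (H1), globally). Passing to the limit in $\langle J'(u_{k}),\varphi\rangle\to0$ gives $J'(u)=0$, and since $f$ vanishes on $(-\infty,0]$, testing with $\min(u,0)$ yields $u\ge0$, which the Harnack inequality (or strong maximum principle) for the subelliptic $Q$-sub-Laplacian improves to $u>0$.

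Finally, to identify $u$ as a ground state, put $\mathcal{N}=\{v\in E\setminus\{0\}:\langle J'(v),v\rangle=0\}$; the monotonicity of $t\mapsto f(t)/t^{Q-1}$ built into (H2) (or the Ambrosetti--Rabinowitz condition) shows $t\mapsto J(tv)$ has a unique positive maximum for every $v\ne0$, and a routine comparison gives $c=\inf_{\mathcal{N}}J\le\inf\{J(v):v\ \text{a nontrivial solution of }(\ref{eqa})\}$. On the other hand, weak lower semicontinuity of $\|\cdot\|_{E}^{Q}$ together with the $L^{1}$ convergence of $F(u_{k})/\rho^{\beta}$ gives $J(u)\le\liminf_{k}J(u_{k})=c$, while $u\in\mathcal{N}$ forces $J(u)\ge c$; hence $J(u)=c$ and $u$ minimizes $J$ among the nontrivial solutions, i.e.\ $u$ is a positive ground state. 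The main obstacle is the third paragraph: keeping $\limsup_{k}\|u_{k}-u\|_{HW^{1,Q}}^{Q}$ strictly below the Trudinger--Moser threshold so that Theorem~\ref{Theorem2} applies and the nonlinearity passes to the limit. This is precisely the ingredient unavailable to \cite{Lu4,Lam,Lam2,Lam3} --- the plain Trudinger--Moser inequality cannot control $f(u_{k})$ when the weak limit concentrates --- and it is what makes the minimizing (ground state) property reachable here.
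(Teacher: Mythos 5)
Your mountain-pass skeleton matches the paper's, but the proof of the crucial level estimate $d_\infty<\frac{1}{Q}\left(\frac{\alpha_{Q,\beta}}{\alpha_{0}}\right)^{Q-1}$ as you propose it does not go through under the stated hypotheses. You insert Moser-type concentrating functions and invoke ``the quantitative lower bound on $\liminf_{t\to\infty}tf(t)\exp(-\alpha_{0}t^{Q/(Q-1)})$ contained in (H2)'' --- but no such asymptotic condition appears in (H2). What (H2) actually provides is (f4): $f(t)\geq C_{\mu}t^{\mu-1}$ with $C_{\mu}$ exceeding an explicit constant built from $\lambda_{\mu}$, and the paper's Lemma \ref{c infinit} obtains the level bound from precisely this: the infimum $\lambda_{\mu}$ is attained by some $v_{0}$ (via the compact embedding $\mathcal{S}\hookrightarrow L^{q}(\mathbb{H}^{n})$), and then $d_{\infty}\leq\max_{t>0}\left(\frac{t^{Q}}{Q}\lambda_{\mu}-\frac{C_{\mu}}{\mu}t^{\mu}\right)=\frac{\mu-Q}{Q\mu}\frac{\lambda_{\mu}^{\mu/(\mu-Q)}}{C_{\mu}^{Q/(\mu-Q)}}$, which is below the threshold exactly because $C_{\mu}$ is large. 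With only (f1)--(f6) at hand, your Moser-sequence computation has no hypothesis to feed on, so this step is a genuine gap.

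A second problem is circularity in your use of Theorem \ref{Theorem2}: the concentration-compactness principle needs a nonzero weak limit (you yourself write ``whose weak limit is nonzero once $u\neq0$''), so it cannot be the tool that establishes $u\neq0$. In the paper's Lemma \ref{PS}, nontriviality at a level $0<d<\frac{1}{Q}\left(\frac{\alpha_{Q,\beta}}{\alpha_{0}}\right)^{Q-1}$ is proved by contradiction using only the ordinary sharp inequality of Lemma \ref{Lemma2}: if $u=0$, then $\int_{\mathbb{H}^{n}}F(u_{k})/\rho^{\beta}\,d\xi\to0$, hence $\|u_{k}\|^{Q}\to dQ$ lies strictly below the Trudinger--Moser threshold, the exponential integrals (\ref{bound}) are uniformly bounded, and therefore $\int_{\mathbb{H}^{n}}f(u_{k})u_{k}/\rho^{\beta}\,d\xi\to0$, contradicting $DJ(u_{k})u_{k}\to0$. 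Theorem \ref{Theorem2} is used only afterwards, once $u\neq0$ is known and $\|u\|<\lim_{k}\|u_{k}\|$, where (\ref{bound2}) gives the improved admissible exponent that makes $\int_{\mathbb{H}^{n}}f(u_{k})(u_{k}-u)/\rho^{\beta}\,d\xi\to0$ and yields \emph{strong} convergence $u_{k}\to u$ in $\mathcal{S}$, i.e.\ the full $(PS)_{d}$ condition; this gives $J(u_{0})=d_{\infty}$ directly, without your Nehari-manifold detour to force $J(u)=c$ (a detour which in any case rests on the monotonicity of $f(t)/t^{Q-1}$, the same ingredient the paper uses to show $J(u)=\max_{t\geq0}J(tu)\geq d_{\infty}$ for every nontrivial critical point and hence $d_{\infty}\leq M_{\infty}$). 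The closing positivity argument is also slightly different: the paper only tests with $u_{0-}$ and uses (f6) together with $J(u_{0})=d_{\infty}>0$, rather than appealing to a subelliptic Harnack inequality.
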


\medskip Throughout this paper, denote by the letter $c$ some positive
constant which may vary from line to line.

\section{Concentration-Compactness principles on Heisenberg
groups\label{proof of CC}}

\subsection{Concentration-Compactness principle for domains with finite
measure\label{domain finite}}

In this subsection, we give the

\begin{proof}
[Proof of Theorem \ref{Theorem1}]Since $\left\Vert \nabla_{\mathbb{H}%
}u\right\Vert _{Q}\leq\underset{k}{\lim}\left\Vert \nabla_{\mathbb{H}}%
u_{k}\right\Vert _{Q}=1$, we split the proof into two cases. \

Case 1: \ $\left\Vert \nabla_{\mathbb{H}}u\right\Vert _{Q}<1.$\ We assume by
contradiction for some $p_{1}<M_{Q,u}$, we have%
\[
\underset{k}{\sup}\int_{\Omega}\frac{\exp\left(  \alpha_{Q,\beta}p_{1}%
u_{k}^{\frac{Q}{Q-1}}\right)  }{\rho\left(  \xi\right)  ^{\beta}}d\xi
=+\infty.
\]
Set
\[
\Omega_{L}^{k}=\left\{  \xi\in\Omega: u_{k}\left(  \xi\right)  \geq L\right\}
,
\]
where $L$ is some constant. \ Let $v_{k}=u_{k}-L$. Then for any $\varepsilon
>0$, one has
\begin{equation}
u_{k}^{\frac{Q}{Q-1}}\leq\left(  1+\varepsilon\right)  v_{k}^{\frac{Q}{Q-1}%
}+C\left(  \varepsilon,Q\right)  L^{\frac{Q}{Q-1}}. \label{2}%
\end{equation}

Since $0\leq\beta<Q$, we have
\begin{align*}
\int_{\Omega}\frac{\exp\left(  \alpha_{Q,\beta}p_{1}u_{k}^{\frac{Q}{Q-1}%
}\right)  }{\rho\left(  \xi\right)  ^{\beta}}d\xi &  =\int_{\Omega_{L}^{k}%
}\frac{\exp\left(  \alpha_{Q,\beta}p_{1}u_{k}^{\frac{Q}{Q-1}}\right)  }%
{\rho\left(  \xi\right)  ^{\beta}}d\xi+\int_{\Omega\backslash\Omega_{L}^{k}%
}\frac{\exp\left(  \alpha_{Q,\beta}p_{1}u_{k}^{\frac{Q}{Q-1}}\right)  }%
{\rho\left(  \xi\right)  ^{\beta}}d\xi\\
&  \leq\int_{\Omega_{L}^{k}}\frac{\exp\left(  \alpha_{Q,\beta}p_{1}%
u_{k}^{\frac{Q}{Q-1}}\right)  }{\rho\left(  \xi\right)  ^{\beta}}d\xi
+c\exp\left(  \alpha_{Q,\beta}p_{1}L^{\frac{Q}{Q-1}}\right)  \int_{\Omega
}\frac{1}{\rho\left(  \xi\right)  ^{\beta}}d\xi\\
&  \leq\int_{\Omega_{L}^{k}}\frac{\exp\left(  \alpha_{Q,\beta}p_{1}%
u_{k}^{\frac{Q}{Q-1}}\right)  }{\rho\left(  \xi\right)  ^{\beta}}d\xi+c\left(
L,Q, |\Omega|, \beta\right)  ,
\end{align*}
and then
\[
\underset{k}{\sup}\int_{\Omega_{L}^{k}}\frac{\exp\left(  \alpha_{Q,\beta}%
p_{1}u_{k}^{\frac{Q}{Q-1}}\right)  }{\rho\left(  \xi\right)  ^{\beta}}%
d\xi=\infty.
\]

By (\ref{2}) we have
\[
\int_{\Omega_{L}^{k}}\frac{\exp\left(  \alpha_{Q,\beta}p_{1}u_{k}^{\frac
{Q}{Q-1}}\right)  }{\rho\left(  \xi\right)  ^{\beta}}d\xi\leq\exp\left(
\alpha_{Q,\beta}p_{1}C\left(  \varepsilon,Q\right)  L^{\frac{Q}{Q-1}}\right)
\cdot\int_{\Omega_{L}^{k}}\frac{\exp\left(  \left(  1+\varepsilon\right)
\alpha_{Q,\beta}p_{1}v_{k}^{\frac{Q}{Q-1}}\right)  }{\rho\left(  \xi\right)
^{\beta}}d\xi.
\]
Thus
\[
\underset{k}{\sup}\int_{\Omega_{L}^{k}}\frac{\exp\left(  \bar{p}_{1}%
\alpha_{Q,\beta}v_{k}^{\frac{Q}{Q-1}}\right)  }{\rho\left(  \xi\right)
^{\beta}}d\xi=\infty,
\]
where $\bar{p}_{1}=\left(  1+\varepsilon\right)  p_{1}<M_{Q,u}.$

Now, we define
\[
T^{L}\left(  u\right)  =\min\left\{  L,u\right\}  \text{ and }T_{L}\left(
u\right)  =u-T^{L}\left(  u\right)
\]
and choose $L$ such that%
\begin{equation}
\frac{1-\left\Vert \nabla_{\mathbb{H}}u\right\Vert _{Q}^{Q}}{1-\left\Vert
\nabla_{\mathbb{H}}T^{L}u\right\Vert _{Q}^{Q}}>\left(  \frac{\bar{p}_{1}%
}{M_{Q,u}}\right)  ^{Q-1}. \label{4}%
\end{equation}
We claim that
\[
\underset{k}{\lim\sup}\int_{\Omega_{L}^{k}}\left\vert \nabla_{\mathbb{H}}%
v_{k}\right\vert ^{Q}d\xi<\left(  \frac{1}{\bar{p}_{1}}\right)  ^{Q-1}.
\]
If not, then up to a subsequence, one has
\begin{equation}
\int_{\Omega_{L}^{k}}\left\vert \nabla_{\mathbb{H}}v_{k}\right\vert ^{Q}%
d\xi=\int_{\Omega}\left\vert \nabla_{\mathbb{H}}T_{L}u_{k}\right\vert ^{Q}%
d\xi\geq\left(  \frac{1}{\bar{p}_{1}}\right)  ^{Q-1}+o_{k}\left(  1\right)  .
\label{3}%
\end{equation}
\ Thus,
\begin{align*}
\left(  \frac{1}{\bar{p}_{1}}\right)  ^{Q-1}+\int_{\Omega}\left\vert
\nabla_{\mathbb{H}}T^{L}u_{k}\right\vert ^{Q}d\xi+o_{k}\left(  1\right)   &
\leq\int_{\Omega}\left\vert \nabla_{\mathbb{H}}T_{L}u_{k}\right\vert ^{Q}%
d\xi+\int_{\Omega\backslash\Omega_{L}^{k}}\left\vert \nabla_{\mathbb{H}}%
u_{k}\right\vert ^{Q}d\xi\\
&  =\int_{\Omega_{L}^{k}}\left\vert \nabla_{\mathbb{H}}u_{k}\right\vert
^{Q}d\xi+\int_{\Omega\backslash\Omega_{L}^{k}}\left\vert \nabla_{\mathbb{H}%
}u_{k}\right\vert ^{Q}d\xi=1.
\end{align*}
For $L>0$ fixed, $T^{L}u_{k}$ is also bounded in $HW^{1,Q}\left(
\Omega\right)  $. Hence, up to a subsequence, $T^{L}u_{k}\rightharpoonup
T^{L}u$ in $HW^{1,Q}\left(  \Omega\right)  $ and $T^{L}u_{k}\rightarrow
T^{L}u$ almost everywhere in $\Omega$. By the lower semicontinuity of the norm
in $HW^{1,Q}\left(  \Omega\right)  $ and the above inequality, we have%
\[
\bar{p}_{1}\geq\frac{1}{\left(  1-\underset{k\rightarrow\infty}{\lim\inf
}\left\Vert \nabla_{\mathbb{H}}T^{L}u_{k}\right\Vert _{Q}^{Q}\right)
^{\frac{1}{Q-1}}}\geq\frac{1}{\left(  1-\left\Vert \nabla_{\mathbb{H}}%
T^{L}u\right\Vert _{Q}^{Q}\right)  ^{\frac{1}{Q-1}}},
\]
combining with (\ref{4}), we derive
\[
\bar{p}_{1}\geq\frac{1}{\left(  1-\left\Vert \nabla_{\mathbb{H}}%
T^{L}u\right\Vert _{Q}^{Q}\right)  ^{\frac{1}{Q-1}}}>\frac{\bar{p}_{1}%
}{M_{Q,u}}\frac{1}{\left(  1-\left\Vert \nabla_{\mathbb{H}}u\right\Vert
_{Q}^{Q}\right)  ^{\frac{1}{Q-1}}}=\bar{p}_{1},
\]
which is a contradiction. \ Therefore
\[
\underset{k}{\lim\sup}\int_{\Omega_{L}^{k}}\left\vert \nabla_{\mathbb{H}}%
v_{k}\right\vert ^{Q}d\xi<\left(  \frac{1}{\bar{p}_{1}}\right)  ^{Q-1}.
\]
By the Trudinger-Moser inequality (\ref{moser-lu}), we derive $\ \ $%
\[
\underset{k}{\sup}\int_{\Omega_{L}^{k}}\frac{\exp\left(  \bar{p}_{1}%
\alpha_{Q,\beta}v_{k}^{\frac{Q}{Q-1}}\right)  }{\rho\left(  \xi\right)
^{\beta}}d\xi<\infty,
\]
which is also a contradiction. The proof is finished in this case.

\medskip

Case 2: $\left\Vert \nabla_{\mathbb{H}}u\right\Vert _{Q}=1$. We can iterate
the procedure as in Case $1$ and get
\[
\underset{k}{\sup}\int_{\Omega_{L}^{k}}\frac{\exp\left(  \bar{p}_{1}%
\alpha_{Q,\beta}v_{k}^{\frac{Q}{Q-1}}\right)  }{\rho\left(  \xi\right)
^{\beta}}=\infty,
\]
where $\bar{p}_{1}=\left(  1+\varepsilon\right)  p_{1}$. \ Then we have
\[
\underset{k}{\lim\sup}\int_{\Omega_{L}^{k}}\left\vert \nabla_{\mathbb{H}}%
v_{k}\right\vert ^{Q}d\xi=\underset{k}{\lim\sup}\int_{\Omega}\left\vert
\nabla_{\mathbb{H}}T_{L}u_{k}\right\vert ^{Q}d\xi\geq\left(  \frac{1}{\bar
{p}_{1}}\right)  ^{Q-1},
\]
thus,
\[
\left\Vert \nabla_{\mathbb{H}}T^{L}u\right\Vert _{Q}^{Q}\leq\underset{k}%
{\lim\inf}\int_{\Omega}\left\vert \nabla_{\mathbb{H}}T^{L}u_{k}\right\vert
^{Q}d\xi=1-\underset{k}{\lim\sup}\int_{\Omega}\left\vert \nabla_{\mathbb{H}%
}T_{L}u_{k}\right\vert ^{Q}d\xi\leq1-\left(  \frac{1}{\bar{p}_{1}}\right)
^{Q-1}.
\]
On the other hand, since $\left\Vert \nabla_{\mathbb{H}}u\right\Vert _{Q}=1$,
we can take $L$ large enough such that
\[
\left\Vert \nabla_{\mathbb{H}}T^{L}u\right\Vert _{Q}^{Q}>1-\frac{1}{2}\left(
\frac{1}{\bar{p}_{1}}\right)  ^{Q-1},
\]
which is contradiction, and the proof is finished in this case.

\medskip

Now, we prove the sharpness of $M_{Q,u}$. For some $r>0$, we defined
$\omega_{k}\left(  \xi\right)  $ by%
\begin{equation}
\omega_{k}\left(  \xi\right)  =\left\{
\begin{array}
[c]{c}%
Q^{\frac{1-Q}{Q}}\left(  c_{Q}\right)  ^{-\frac{1}{Q}}k^{\frac{Q-1}{Q}}\text{
\ \ \ \ \ \ \ \ if }\left\vert \xi\right\vert \in\left[  0,re^{-\frac{k}{Q}%
}\right] \\
Q^{\frac{1}{Q}}\left(  c_{Q}\right)  ^{-\frac{1}{Q}}\log\left(  \frac
{r}{\left\vert \xi\right\vert }\right)  k^{-\frac{1}{Q}}\text{ \ if
}\left\vert \xi\right\vert \in\left[  re^{-\frac{k}{Q}},r\right] \\
0\text{ \ \ \ \ \ \ \ \ \ \ \ \ \ \ \ \ \ \ if }\left\vert \xi\right\vert
\in\left[  r,\infty\right]  ,
\end{array}
\right.  \label{add1}%
\end{equation}
where $c_{Q}=\int_{\Sigma}\left\vert x^{\ast}\right\vert ^{Q}d\xi$, $x^{\ast
}=\frac{x}{\left\vert \xi\right\vert }$ and $\Sigma$ is the unit sphere on
$\mathbb{H}^{n}$.

\medskip

We can verify that $\omega_{k}\left(  \xi\right)  \in HW_{0}^{1,Q}\left(
\Omega\right)  $. Actually, from Lemma \ref{lu} we have%
\begin{align*}
\int_{\Omega}\left\vert \nabla_{\mathbb{H}}\omega_{k}\left(  \xi\right)
\right\vert ^{Q}d\xi &  =\int_{\Sigma}\int_{re^{-\frac{k}{Q}}}^{r}\left\vert
Q^{\frac{1}{Q}}\left(  c_{Q}\right)  ^{-\frac{1}{Q}}k^{-\frac{1}{Q}}%
\frac{\left\vert x^{\ast}\right\vert }{\rho\left(  \xi\right)  }\right\vert
^{Q}\rho\left(  \xi\right)  ^{Q-1}d\rho\left(  \xi\right)  d\mu\left(
x^{\ast}\right) \\
&  =\frac{Q}{c_{Q}}\frac{1}{k}c_{Q}\int_{re^{-\frac{k}{Q}}}^{r}\rho^{-1}%
d\rho=1
\end{align*}
and $\omega_{k}\left(  \xi\right)  \rightharpoonup0$ in $HW_{0}^{1,Q}\left(
\Omega\right)  $.

\medskip

Now, for $R=3r$, we define
\begin{equation}
u=\left\{
\begin{array}
[c]{c}%
A\text{ \ \ \ \ \ \ \ \ \ \ \ \ if }\left\vert \xi\right\vert \in\left[
0,\frac{2R}{3}\right] \\
3A-\frac{3A}{R}\left\vert \xi\right\vert \text{ \ \ \ if }\left\vert
\xi\right\vert \in\left[  \frac{2R}{3},R\right] \\
0\text{ \ \ \ \ \ \ \ \ \ \ \ \ \ if }\left\vert \xi\right\vert \in\left[
R\ ,+\infty\right]  ,
\end{array}
\right.  \label{add2}%
\end{equation}
where $A>0$ is chosen in such a way that\ $\left\Vert \nabla_{\mathbb{H}%
}u\right\Vert _{L^{Q}\left(  \Omega\right)  }=$\ $\delta<1$. Defining
\begin{equation}
u_{k}=u+\left(  1-\delta^{Q}\right)  ^{1/Q}\omega_{k}. \label{add3}%
\end{equation}
Observing that $\nabla_{\mathbb{H}}u$ and $\nabla_{\mathbb{H}}\omega_{k}$ have
disjoints supports, we have
\begin{align*}
\left\Vert \nabla_{\mathbb{H}}u_{k}\right\Vert _{L^{Q}\left(  \Omega\right)
}^{Q}  &  =\left\Vert \nabla_{\mathbb{H}}u\right\Vert _{L^{Q}\left(
\Omega\right)  }^{Q}+\left(  1-\delta^{Q}\right) \\
&  =1,
\end{align*}
moreover, $u_{k}\rightharpoonup u$ in $HW_{0}^{1,Q}\left(  \Omega\right)  $.

Consequently,
\begin{align*}
&  \int_{\Omega}\frac{\exp\left(  \alpha_{Q,\beta}M_{Q,u}u_{k}^{\frac{Q}{Q-1}%
}\right)  }{\rho\left(  \xi\right)  ^{\beta}}d\xi=\int_{\Omega}\frac
{\exp\left(  \frac{\alpha_{Q,\beta}u_{k}^{\frac{Q}{Q-1}}}{\left(  1-\delta
^{Q}\right)  ^{1/\left(  Q-1\right)  }}\right)  }{\rho\left(  \xi\right)
^{\beta}}d\xi\\
&  \geq\int_{B_{r\exp}\left(  -\frac{k}{Q}\right)  }\frac{\exp\left(
\frac{\alpha_{Q,\beta}\left(  A+\left(  1-\delta^{Q}\right)  ^{1/Q}\omega
_{k}\right)  ^{\frac{Q}{Q-1}}}{\left(  1-\delta^{Q}\right)  ^{1/\left(
Q-1\right)  }}\right)  }{\rho\left(  \xi\right)  ^{\beta}}d\xi\\
&  =\int_{B_{r\exp}\left(  -\frac{k}{Q}\right)  }\frac{\exp\left(
\alpha_{Q,\beta}\left(  C+\omega_{k}\right)  ^{\frac{Q}{Q-1}}\right)  }%
{\rho\left(  \xi\right)  ^{\beta}}d\xi\\
&  \geq\exp\left(  \left(  C^{\prime}+\left(  \left(  1-\frac{\beta}%
{Q}\right)  k\right)  ^{\frac{Q-1}{Q}}\right)  ^{\frac{Q}{Q-1}}\right)
\int_{B_{r\exp}\left(  -\frac{k}{Q}\right)  }\frac{1}{\rho\left(  \xi\right)
^{\beta}}d\xi\\
&  \geq C^{\prime\prime}\exp\left(  \left(  C^{\prime}+\left(  \left(
1-\frac{\beta}{Q}\right)  k\right)  ^{\frac{Q-1}{Q}}\right)  ^{\frac{Q}{Q-1}%
}-\left(  1-\frac{\beta}{Q}\right)  k\right)  \rightarrow\infty,
\end{align*}
for some positive constant $C,C^{\prime},C^{\prime\prime}$, and the theorem is finished.
\end{proof}

\subsection{\bigskip Concentration Compactness Principle for the whole space
$\mathbb{H}^{n}$\label{domain infinite}}

In order to prove Theorem \ref{Theorem2}, we need the following

\begin{lemma}
\label{do1}Let $\left\{  u_{k}\right\}  $ be a sequence in $HW^{1,Q}\left(
\mathbb{H}^{n}\right)  $ strongly convergent. Then there exist a subsequence
$\left\{  u_{k_{i}}\right\}  $ of $\left\{  u_{k}\right\}  $ and
$\omega\left(  \xi\right)  \in HW^{1,Q}\left(  \mathbb{H}^{n}\right)  $ such
that $\left\vert u_{k_{i}}\right\vert \leq$ $\omega\left(  \xi\right)  $
almost everywhere on $\mathbb{H}^{n}$.
\end{lemma}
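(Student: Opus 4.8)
The plan is to reproduce, at the level of the full Sobolev norm, the classical extraction argument showing that an $L^p$-convergent sequence has an $L^p$-dominated subsequence. Since $\{u_k\}$ converges strongly in $HW^{1,Q}(\mathbb{H}^n)$, it is Cauchy there, so we may pass to a subsequence $\{u_{k_i}\}$ with $\left\Vert u_{k_{i+1}}-u_{k_i}\right\Vert_{HW^{1,Q}(\mathbb{H}^n)}\le 2^{-i}$ for every $i\ge 1$. Define the nonnegative, pointwise nondecreasing partial sums
\[
S_N:=\left\vert u_{k_1}\right\vert+\sum_{i=1}^{N}\left\vert u_{k_{i+1}}-u_{k_i}\right\vert,
\qquad
\omega:=\left\vert u_{k_1}\right\vert+\sum_{i=1}^{\infty}\left\vert u_{k_{i+1}}-u_{k_i}\right\vert .
\]

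Next I would check that $\omega\in HW^{1,Q}(\mathbb{H}^n)$. The only structural ingredient needed is the pointwise identity $\left\vert\nabla_{\mathbb{H}}\left\vert v\right\vert\right\vert=\left\vert\nabla_{\mathbb{H}}v\right\vert$ a.e. for $v\in HW^{1,Q}(\mathbb{H}^n)$, which follows from the chain rule applied to the smooth approximations $\sqrt{v^2+\varepsilon^2}-\varepsilon$ and letting $\varepsilon\to 0$; in particular $\left\Vert\,\left\vert v\right\vert\,\right\Vert_{HW^{1,Q}}=\left\Vert v\right\Vert_{HW^{1,Q}}$. Hence $\sum_{i}\left\Vert\,\left\vert u_{k_{i+1}}-u_{k_i}\right\vert\,\right\Vert_{HW^{1,Q}}\le\sum_i 2^{-i}<\infty$, so the series defining $\omega$ is absolutely convergent in the Banach space $HW^{1,Q}(\mathbb{H}^n)$; consequently $S_N\to\omega$ in $HW^{1,Q}(\mathbb{H}^n)$, and therefore in $L^Q(\mathbb{H}^n)$.

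Finally, the domination. Since $S_N$ is nonnegative and nondecreasing in $N$, it has an a.e. pointwise limit $S_\infty\in[0,+\infty]$; since $S_N\to\omega$ in $L^Q(\mathbb{H}^n)$, a further subsequence of $\{S_N\}$ converges to $\omega$ a.e., which forces $S_\infty=\omega$ a.e., so in particular $\omega<\infty$ a.e. By the triangle inequality applied to the telescoping identity $u_{k_i}=u_{k_1}+\sum_{j=1}^{i-1}\big(u_{k_{j+1}}-u_{k_j}\big)$ we get $\left\vert u_{k_i}\right\vert\le S_{i-1}\le S_\infty=\omega$ a.e.\ on $\mathbb{H}^n$ for every $i$, which is the assertion. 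The only step that is not completely routine is the norm bound $\left\Vert\,\left\vert v\right\vert\,\right\Vert_{HW^{1,Q}}\le\left\Vert v\right\Vert_{HW^{1,Q}}$, but unlike the radial rearrangement there is no Pólya--Szeg\H{o} obstruction for the map $v\mapsto\left\vert v\right\vert$, and the identity $\left\vert\nabla_{\mathbb{H}}\left\vert v\right\vert\right\vert=\left\vert\nabla_{\mathbb{H}}v\right\vert$ is a standard consequence of the chain rule for horizontal gradients; completeness of $HW^{1,Q}(\mathbb{H}^n)$ and a.e.\ convergence along a subsequence of an $L^Q$-convergent sequence are classical.
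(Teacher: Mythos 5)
Your argument is correct and is essentially the proof the paper has in mind: the paper omits the proof and refers to \cite[Proposition 1]{do}, which is proved by exactly this Riesz--Fischer type extraction of a rapidly convergent subsequence with the dominating function $\omega=\left\vert u_{k_1}\right\vert+\sum_i\left\vert u_{k_{i+1}}-u_{k_i}\right\vert$. The only Heisenberg-specific ingredient, namely that $v\mapsto\left\vert v\right\vert$ does not increase the $HW^{1,Q}$ norm because $\left\vert\nabla_{\mathbb{H}}\left\vert v\right\vert\right\vert=\left\vert\nabla_{\mathbb{H}}v\right\vert$ a.e., is indeed standard for Sobolev spaces associated with the horizontal vector fields, so no gap remains.
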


\begin{proof}
The proof is similar as \cite[Proposition 1]{do}, and we omit it.
\end{proof}

Now, we give the

\begin{proof}
[Proof of Theorem \ref{Theorem2}]As in the proof of Theorem \ref{Theorem1}, we
split the proof into two cases.

Case 1: \ $\left\Vert u\right\Vert _{HW^{1,Q}\left(  \mathbb{H}^{n}\right)
}<1.$\ We assume by contradiction for some $p_{1}<\tilde{M}_{Q,u}$, we have%
\[
\underset{k}{\sup}\int_{\mathbb{H}^{n}}\frac{\Phi\left(  \alpha_{Q,\beta}%
p_{1}u_{k}^{\frac{Q}{Q-1}}\right)  }{\rho\left(  \xi\right)  ^{\beta}}%
d\xi=+\infty.
\]
Set
\[
A\left(  u_{k}\right)  =2^{-\frac{1}{Q\left(  Q-1\right)  }}\left\Vert
u_{k}\right\Vert _{L^{Q}\left(  \mathbb{H}^{n}\right)  }\text{, }\Omega\left(
u_{k}\right)  =\left\{  \xi\in\mathbb{H}^{n}:u_{k}\left(  \xi\right)
>A\left(  u_{k}\right)  \right\}
\]
and
\[
\Omega_{L}^{k}=\left\{  \xi\in\mathbb{H}^{n},u_{k}\left(  \xi\right)  \geq
L\right\}  ,
\]
where $L$ is some constant which will be determined later. \ We can
easily verify that%
\[
A\left(  u_{k}\right)  <1\ \text{and }\left\vert \Omega\left(  u_{k}\right)
\right\vert \leq2^{\frac{1}{Q-1}}.
\]
Now, we write
\[
\int_{\mathbb{H}^{n}}\frac{\Phi\left(  \alpha_{Q,\beta}p_{1}u_{k}^{\frac
{Q}{Q-1}}\right)  }{\rho\left(  \xi\right)  ^{\beta}}d\xi=\left(  \int
_{\Omega\left(  u_{k}\right)  }+\int_{\mathbb{H}^{n}\backslash\Omega\left(
u_{k}\right)  }\right)  \frac{\Phi\left(  \alpha_{Q,\beta}p_{1}u_{k}^{\frac
{Q}{Q-1}}\right)  }{\rho\left(  \xi\right)  ^{\beta}}d\xi,
\]
Similar to  the proof of \cite[Theorem 1.6]{Lam}, we can show that
\[
\int_{\mathbb{H}^{n}\backslash\Omega\left(  u_{k}\right)  }\frac{\Phi\left(
\alpha_{Q,\beta}p_{1}u_{k}^{\frac{Q}{Q-1}}\right)  }{\rho\left(  \xi\right)
^{\beta}}d\xi\leq c\left(  p_{1},Q,\beta\right)  .
\]
Therefore, we have
\[
\underset{k}{\sup}\int_{\Omega\left(  u_{k}\right)  }\frac{\Phi\left(
\alpha_{Q,\beta}p_{1}u_{k}^{\frac{Q}{Q-1}}\right)  }{\rho\left(  \xi\right)
^{\beta}}d\xi=\infty
\]

Let $v_{k}=u_{k}-L$. Then for any $\varepsilon>0$, one has
\begin{equation}
u_{k}^{\frac{Q}{Q-1}}\leq\left(  1+\varepsilon\right)  v_{k}^{\frac{Q}{Q-1}%
}+c\left(  \varepsilon,Q\right)  L^{\frac{Q}{Q-1}}. \label{a2}%
\end{equation}
By (\ref{a2}), we have
$$
\int_{\Omega\left(  u_{k}\right)  }\frac{\Phi\left(  \alpha_{Q,\beta}%
p_{1}u_{k}^{\frac{Q}{Q-1}}\right)  }{\rho\left(  \xi\right)  ^{\beta}}d\xi
  \leq  \exp\left(  \alpha_{Q,\beta}p_{1}c\left(  \varepsilon,Q\right)
L^{\frac{Q}{Q-1}}\right)  \int_{\Omega\left(  u_{k}\right)  }\frac{\exp\left(
\left(  1+\varepsilon\right)  \alpha_{Q,\beta}p_{1}v_{k}^{\frac{Q}{Q-1}%
}\right)  }{\rho\left(  \xi\right)  ^{\beta}}d\xi.
$$
Thus
\[
\underset{k}{\sup}\int_{\Omega\left(  u_{k}\right)  }\frac{\exp\left(
\alpha_{Q}\bar{p}_{1}v_{k}^{\frac{Q}{Q-1}}\right)  }{\rho\left(  \xi\right)
^{\beta}}d\xi=\infty,
\]
where $\bar{p}_{1}=\left(  1+\varepsilon\right)  p_{1}<\tilde{M}_{Q,u}$.

Since $\left\vert \Omega\left(  u_{k}\right)  \right\vert \leq2^{\frac{1}%
{Q-1}}$, we have%
\[
\underset{k}{\sup}\int_{\Omega_{L}^{k}}\frac{\exp\left(  \alpha_{Q,\beta}%
\bar{p}_{1}v_{k}^{\frac{Q}{Q-1}}\right)  }{\rho\left(  \xi\right)  ^{\beta}%
}d\xi=\infty.
\]

Now, we define
\[
T^{L}\left(  u\right)  =\min\left\{  L,u\right\}  \text{ and }T_{L}\left(
u\right)  =u-T^{L}\left(  u\right)  \text{.}%
\]
and choose $L$ such that%
\begin{equation}
\frac{1-\left\Vert u\right\Vert _{HW^{1,Q}\left(  \mathbb{H}^{n}\right)  }%
^{Q}}{1-\left\Vert T^{L}u\right\Vert _{HW^{1,Q}\left(  \mathbb{H}^{n}\right)
}^{Q}}>\left(  \frac{\bar{p}_{1}}{\tilde{M}_{Q,u}}\right)  ^{Q-1}. \label{a4}%
\end{equation}
We claim that
\[
\underset{k}{\lim\sup}\int_{\Omega_{L}^{k}}\left\vert \nabla_{\mathbb{H}}%
v_{k}\right\vert ^{Q}d\xi<\left(  \frac{1}{\bar{p}_{1}}\right)  ^{Q-1}.
\]
If not, up to a subsequence, one has
\begin{equation}
\int_{\Omega_{L}^{k}}\left\vert \nabla_{\mathbb{H}}v_{k}\right\vert ^{Q}%
d\xi=\int_{\mathbb{H}^{n}}\left\vert \nabla_{\mathbb{H}}T_{L}u_{k}\right\vert
^{Q}d\xi\geq\left(  \frac{1}{\bar{p}_{1}}\right)  ^{Q-1}+o_{k}\left(
1\right)  . \label{a3}%
\end{equation}
\ Thus,
\begin{align*}
&  \left(  \frac{1}{\bar{p}_{1}}\right)  ^{Q-1}+\int_{\mathbb{H}^{n}%
}\left\vert \nabla_{\mathbb{H}}T^{L}u_{k}\right\vert ^{Q}d\xi+\int
_{\mathbb{H}^{n}}\left\vert T^{L}u_{k}\right\vert ^{Q}d\xi+o_{k}\left(
1\right) \\
&  \leq\left(  \frac{1}{\bar{p}_{1}}\right)  ^{Q-1}+\int_{\mathbb{H}^{n}%
}\left\vert \nabla_{\mathbb{H}}T^{L}u_{k}\right\vert ^{Q}d\xi+\int
_{\mathbb{H}^{n}}\left\vert u_{k}\right\vert ^{Q}d\xi+o_{k}\left(  1\right) \\
&  \leq\int_{\mathbb{H}^{n}}\left\vert \nabla_{\mathbb{H}}T_{L}u_{k}%
\right\vert ^{Q}d\xi+\int_{\mathbb{H}^{n}\backslash\Omega_{L}^{k}}\left\vert
\nabla_{\mathbb{H}}u_{k}\right\vert ^{Q}d\xi+\int_{\mathbb{H}^{n}}\left\vert
u_{k}\right\vert ^{Q}d\xi\\
&  =\int_{\Omega_{L}^{k}}\left\vert \nabla_{\mathbb{H}}u_{k}\right\vert
^{Q}d\xi+\int_{\Omega\backslash\Omega_{L}^{k}}\left\vert \nabla_{\mathbb{H}%
}u_{k}\right\vert ^{Q}d\xi+\int_{\mathbb{H}^{n}}\left\vert u_{k}\right\vert
^{Q}d\xi=1.
\end{align*}
For $L>0$ fixed, $T^{L}u_{k}$ is also bounded in $HW^{1,Q}\left(
\mathbb{H}^{n}\right)  $. Hence, up to a subsequence, $T^{L}u_{k}%
\rightharpoonup T^{L}u$ in $HW^{1,Q}\left(  \mathbb{H}^{n}\right)  $ and
$T^{L}u_{k}\rightarrow T^{L}u$ almost everywhere on $\mathbb{H}^{n}$. By the
lower semicontinuity of the norm in $HW^{1,Q}\left(  \mathbb{H}^{n}\right)  $
and the above inequality, we have%
\[
\bar{p}_{1}\geq\frac{1}{\left(  1-\underset{k\rightarrow\infty}{\lim\inf
}\left\Vert T^{L}u_{k}\right\Vert _{HW^{1,Q}\left(  \mathbb{H}^{n}\right)
}^{Q}\right)  ^{\frac{1}{Q-1}}}\geq\frac{1}{\left(  1-\left\Vert
T^{L}u\right\Vert _{HW^{1,Q}\left(  \mathbb{H}^{n}\right)  }^{Q}\right)
^{\frac{1}{Q-1}}},
\]
combining with (\ref{a4}), we have
\[
\bar{p}_{1}\geq\frac{1}{\left(  1-\left\Vert T^{L}u\right\Vert _{HW^{1,Q}%
\left(  \mathbb{H}^{n}\right)  }^{Q}\right)  ^{\frac{1}{Q-1}}}>\frac{\bar
{p}_{1}}{\tilde{M}_{Q,u}}\frac{1}{\left(  1-\left\Vert T^{L}u\right\Vert
_{HW^{1,Q}\left(  \mathbb{H}^{n}\right)  }^{Q}\right)  ^{\frac{1}{Q-1}}}%
=\bar{p}_{1},
\]
which is a contradiction. \ Therefore
\[
\underset{k}{\lim\sup}\int_{\Omega_{L}^{k}}\left\vert \nabla_{\mathbb{H}}%
v_{k}\right\vert ^{Q}d\xi<\left(  \frac{1}{\bar{p}_{1}}\right)  ^{Q-1}.
\]
By the Trudinger-Moser inequality (\ref{moser-lu}), we have $\ \ $%
\[
\underset{k}{\sup}\int_{\Omega_{L}^{k}}\frac{\exp\left(  \alpha_{Q,\beta}%
\bar{p}_{1}v_{k}^{\frac{Q}{Q-1}}\right)  }{\rho\left(  \xi\right)  ^{\beta}%
}d\xi<\infty,
\]
which is also a contradiction. The proof is finished in this case.

\medskip

Case 2: $\left\Vert u\right\Vert _{HW^{1,Q}\left(  \mathbb{H}^{n}\right)  }%
=1$. Since $HW^{1,Q}\left(  \mathbb{H}^{n}\right)  $ is uniformly convex
Banach space and $u_{k}\rightharpoonup u$ weakly in $HW^{1,Q}\left(
\mathbb{H}^{n}\right)  $, by Radon's Theorem, we have $u_{k}\rightarrow u$
strongly in $HW^{1,Q}\left(  \mathbb{H}^{n}\right)  $. Using Lemma \ref{do1},
there exists some $\omega\left(  \xi\right)  \in HW^{1,Q}\left(
\mathbb{H}^{n}\right)  $, such that up to a subsequence, $\left\vert
u_{k}\right\vert \leq\omega\left(  \xi\right)  \ $a.e. in $\mathbb{H}^{n}$.
Therefore
\[
\int_{\mathbb{H}^{n}}\frac{\Phi\left(  \alpha_{Q,\beta}p_{1}u_{k}^{\frac
{Q}{Q-1}}\right)  }{\rho\left(  \xi\right)  ^{\beta}}d\xi\leq\int
_{\mathbb{H}^{n}}\frac{\Phi\left(  \alpha_{Q,\beta}p_{1}\omega\left(
\xi\right)  ^{\frac{Q}{Q-1}}\right)  }{\rho\left(  \xi\right)  ^{\beta}}d\xi.
\]

Now, we show%
\begin{equation}
\int_{\mathbb{H}^{n}}\frac{\Phi\left(  \alpha_{Q,\beta}p_{1}\omega\left(
\xi\right)  ^{\frac{Q}{Q-1}}\right)  }{\rho\left(  \xi\right)  ^{\beta}}%
d\xi<\infty.\label{add}%
\end{equation}
Set $\Omega\left(  \omega\right)  =\left\{  \xi\in\mathbb{H}^{n}%
:\omega>1\right\}  $, we have
\begin{align*}
\int_{\mathbb{H}^{n}}\left\vert \omega\left(  \xi\right)  \right\vert ^{Q}d\xi
&  \geq\int_{\Omega\left(  \omega\right)  }\left\vert \omega\left(
\xi\right)  \right\vert ^{Q}d\xi\\
&  \geq\left\vert \Omega\left(  \omega\right)  \right\vert .
\end{align*}
Similar as \cite{Lam}, we can derive
\[
\int_{\mathbb{H}^{n}\backslash\Omega\left(  \omega\right)  }\frac{\Phi\left(
\alpha_{Q,\beta}p_{1}\omega\left(  \xi\right)  ^{\frac{Q}{Q-1}}\right)  }%
{\rho\left(  \xi\right)  ^{\beta}}d\xi<C\left(  p_{1},Q,\beta\right)  .
\]
Now, we only need to show
\[
\int_{\Omega\left(  \omega\right)  }\frac{\exp\left(  \alpha_{Q,\beta}%
p_{1}\omega\left(  \xi\right)  ^{\frac{Q}{Q-1}}\right)  }{\rho\left(
\xi\right)  ^{\beta}}d\xi<\infty.
\]
Let $\omega^{\ast}\left(  \xi\right)  $ be the non-increasing rearrangement of
$\omega\left(  \xi\right)  $ in $\Omega\left(  \omega\right)  $. Then \
\[
\int_{\Omega\left(  \omega\right)  }\frac{\exp\left(  \alpha_{Q,\beta}%
p_{1}\omega\left(  \xi\right)  ^{\frac{Q}{Q-1}}\right)  }{\rho\left(
\xi\right)  ^{\beta}}d\xi=\int_{B_{R}}\frac{\exp\left(  \alpha_{Q,\beta}%
p_{1}\omega^{\ast}\left(  \xi\right)  ^{\frac{Q}{Q-1}}\right)  }{\rho\left(
\xi\right)  ^{\beta}}d\xi,
\]
where $\left\vert B_{R}\right\vert =$ $\left\vert \Omega\left(  \omega\right)
\right\vert $. We introduce the variable $t$ by $\ \rho\left(  \xi\right)
^{Q}=R^{Q}e^{-t}$, and set
\[
\varphi\left(  t\right)  =Q^{1-\frac{1}{Q}}c_{Q}^{\frac{1}{Q}}\omega^{\ast
}\left(  \xi\right)  .
\]
Then by Lemma \ref{lu} and the result of Manfredi and Vera De Serio
\cite{Manfredi}\ that there exists a constant $c\geq1$ depending only on $Q$
such that,
\[
\int_{0}^{\infty}\left\vert \varphi^{\prime}\left(  t\right)  \right\vert
^{Q}dt=\int_{B_{R}}\left\vert \nabla_{\mathbb{H}}\omega^{\ast}\left(
\xi\right)  \right\vert ^{Q}d\xi\leq c\int_{\Omega\left(  \omega\right)
}\left\vert \nabla_{\mathbb{H}}\omega\left(  \xi\right)  \right\vert ^{Q}%
d\xi<\infty.
\]
Moreover, we have
\begin{align*}
\int_{\Omega\left(  \omega\right)  }\frac{\exp\left(  \alpha_{Q,\beta}%
p_{1}\omega\left(  \xi\right)  ^{\frac{Q}{Q-1}}\right)  }{\rho\left(
\xi\right)  ^{\beta}}d\xi &  \leq\int_{\Omega\left(  \omega\right)  }%
\frac{\exp\left(  \alpha_{Q,\beta}p_{1}\omega^{\ast}\left(  \xi\right)
^{\frac{Q}{Q-1}}\right)  }{\rho\left(  \xi\right)  ^{\beta}}d\xi\\
&  =\left\vert \Omega\left(  \omega\right)  \right\vert \cdot R^{-\beta}%
\int_{0}^{\infty}\exp\left(  \left(  1-\frac{\beta}{Q}\right)  \left(
p_{1}\varphi\left(  t\right)  ^{\frac{Q}{Q-1}}-t\right)  \right)  dt.
\end{align*}
This follows from the Hardy-Littlewood inequality implies by noticing that the
rearrangement of $\rho\left(  \xi\right)  ^{-\beta}$ is just itself.

Since $\int_{0}^{\infty}\left\vert \varphi^{\prime}\left(  t\right)
\right\vert ^{Q}dt<\infty$, then for all $\varepsilon>0$, there exists
$T=T\left(  \varepsilon\right)  $ such that
\[
\int_{T}^{\infty}\left\vert \varphi^{\prime}\left(  t\right)  \right\vert
^{Q}dt<\varepsilon^{Q}.
\]
Hence, by H\"{o}lder's inequality
\begin{align*}
\varphi\left(  t\right)   &  =\varphi\left(  T\right)  +\int_{T}^{t}%
\varphi^{\prime}\left(  t\right)  dt\\
&  \leq\varphi\left(  T\right)  +\left(  \int_{T}^{t}\left\vert \varphi
^{\prime}\left(  t\right)  \right\vert ^{Q}dt\right)  ^{\frac{1}{Q}}%
\cdot\left\vert t-T\right\vert ^{\frac{Q-1}{Q}}\\
&  \leq\varphi\left(  T\right)  +\varepsilon\left\vert t-T\right\vert
^{\frac{Q-1}{Q}}.
\end{align*}
There exists $\tilde{T}$ such that
\[
p_{1}\varphi\left(  t\right)  ^{\frac{Q}{Q-1}}\leq\frac{t}{2}\text{ for all
}t>\tilde{T}.
\]
Therefore $\int_{\Omega\left(  \omega\right)  }\frac{\exp\left(
\alpha_{Q,\beta}p_{1}\omega\left(  \xi\right)  ^{\frac{Q}{Q-1}}\right)  }%
{\rho\left(  \xi\right)  ^{\beta}}d\xi<\infty$, and the proof is finished in
this case.

Now, we prove the sharpness of $\tilde{M}_{Q,u}$. For some $r>0$ and $R=3r$,
we define $\ \omega_{k}\left(  \xi\right)  ,u\in HW^{1,Q}\left(
\mathbb{H}^{n}\right)  $ as (\ref{add1}),(\ref{add2}), respectively. The
constant $A$ is chosen in such a way that $\left\Vert u\right\Vert
_{HW^{1,Q}\left(  \mathbb{H}^{n}\right)  }=\delta<1$. Defining
\[
u_{k}=u+\left(  1-\delta^{Q}\right)  ^{1/Q}\omega_{k}.
\]
We can easily verify that
\begin{equation}
\left\Vert \omega_{k}\right\Vert _{L^{p}\left(  \mathbb{H}^{n}\right)
}\rightarrow0\text{, for any }p\geq1,\label{add4}%
\end{equation}%
\[
\left\Vert \nabla_{\mathbb{H}}u_{k}\right\Vert _{L^{Q}\left(  \mathbb{H}%
^{n}\right)  }^{Q}=\left\Vert \nabla_{\mathbb{H}}u\right\Vert _{L^{Q}\left(
\mathbb{H}^{n}\right)  }^{Q}+\left(  1-\delta^{Q}\right)  ,
\]
and
\[
u_{k}\rightharpoonup u\text{ weakly in }HW^{1,Q}\left(  \mathbb{H}^{n}\right)
.
\]
Moreover, from (\ref{add4}) we have
\begin{align*}
\int_{\mathbb{H}^{n}}\left\vert u_{k}\right\vert ^{Q}d\xi &  =\int
_{\mathbb{H}^{n}}\left\vert u+\left(  1-\delta^{Q}\right)  ^{1/Q}\omega
_{k}\right\vert ^{Q}d\xi\\
&  =\int_{\mathbb{H}^{n}}\left\vert u\right\vert ^{Q}d\xi+\xi_{k},
\end{align*}
where $\xi_{k}\rightarrow O\left(  \left(  \frac{1}{k}\right)  ^{1/Q}\right)
$, and then we have $\left\Vert u_{k}\right\Vert _{HW^{1,Q}\left(
\mathbb{H}^{n}\right)  }^{Q}=1+\xi_{k}$.  Set $v_{k}=\frac{u_{k}}{\left(
1+\xi_{k}\right)  ^{1/Q}}$, we have
\[
v_{k}\rightharpoonup u\text{ weakly in }HW^{1,Q}\left(  \mathbb{H}^{n}\right)
\text{ with }\left\Vert v_{k}\right\Vert _{HW^{1,Q}\left(  \mathbb{H}%
^{n}\right)  }^{Q}=1\text{. }%
\]
Consequently, for any $\varepsilon_{0}>0$ and $p=\left(  1+\varepsilon
_{0}\right)  \tilde{M}_{Q,u}$, one has
\begin{align*}
&  \int_{\mathbb{H}^{n}}\frac{\Phi\left(  \alpha_{Q,\beta}\tilde{M}_{Q,u}%
v_{k}^{\frac{Q}{Q-1}}\right)  }{\rho\left(  \xi\right)  ^{\beta}}d\xi\\
&  \geq\int_{B_{r\exp\left(  -\frac{k}{Q}\right)  }}\frac{1}{\rho\left(
\xi\right)  ^{\beta}}\exp\left(  \frac{\left(  1+\varepsilon_{0}\right)
\alpha_{Q,\beta}v_{k}^{\frac{Q}{Q-1}}}{\left(  1-\delta^{Q}\right)
^{1/\left(  Q-1\right)  }}\right)  d\xi
\end{align*}
(using the fact that $\xi_{k}\rightarrow0$)%
\begin{align*}
&  \geq\int_{B_{r\exp\left(  -\frac{k}{Q}\right)  }}\frac{1}{\rho\left(
\xi\right)  ^{\beta}}\exp\left(  \frac{\alpha_{Q,\beta}\left(  \left(
1+\varepsilon_{0}^{\prime}\right)  \left(  A+\left(  1-\delta^{Q}\right)
^{1/Q}\omega_{k}\right)  \right)  ^{\frac{Q}{Q-1}}}{\left(  1-\delta
^{Q}\right)  ^{1/\left(  Q-1\right)  }}\right)  d\xi\\
&  =\int_{B_{r\exp\left(  -\frac{k}{Q}\right)  }}\frac{1}{\rho\left(
\xi\right)  ^{\beta}}\exp\left(  \alpha_{Q,\beta}\left[  \left(
1+\varepsilon_{0}^{\prime}\right)  \left(  C+\omega_{k}\right)  \right]
^{\frac{Q}{Q-1}}\right)  d\xi\\
&  \geq\int_{B_{r\exp\left(  -\frac{k}{Q}\right)  }}\frac{1}{\rho\left(
\xi\right)  ^{\beta}}\exp\left(  \left(  1-\frac{\beta}{Q}\right)  \left[
\left(  1+\varepsilon_{0}^{\prime}\right)  \left(  C^{\prime}+k^{\frac{Q-1}%
{Q}}\right)  \right]  ^{\frac{Q}{Q-1}}\right)  d\xi\\
&  \geq C^{\prime\prime}\exp\left(  \left[  \left(  1-\frac{\beta}{Q}\right)
\left(  1+\varepsilon_{0}^{\prime}\right)  \left(  C^{\prime}+k^{\frac{Q-1}%
{Q}}\right)  \right]  ^{\frac{Q}{Q-1}}-\left(  1-\frac{\beta}{Q}\right)
k\right)  \rightarrow\infty
\end{align*}
for some positive constant $\varepsilon_{0}^{\prime},C,C^{\prime}%
,C^{\prime\prime}$, and the theorem is finished.
\end{proof}
\begin{remark}
\label{sharpness} The sequence $\left\{  v_{k}\right\}$ is not enough to show that the supremum (\ref{suprem}) is
infinite when\  $p=\tilde{M}%
_{Q,u}$. Actually, we have
\begin{align*}
& \int_{B_{r\exp\left(  -\frac{k}{Q}\right)  }}\frac{\Phi\left(
\alpha_{Q,\beta}\tilde{M}_{Q,u}v_{k}^{\frac{Q}{Q-1}}\right)  }{\rho\left(
\xi\right)  ^{\beta}}d\xi=\int_{B_{r\exp\left(  -\frac{k}{Q}\right)  }}%
\frac{1}{\rho\left(  \xi\right)  ^{\beta}}\exp\left(  \frac{\alpha_{Q,\beta
}v_{k}^{\frac{Q}{Q-1}}}{\left(  1-\delta^{Q}\right)  ^{1/\left(  Q-1\right)
}}\right)  d\xi\\
& \leq\int_{B_{r\exp\left(  -\frac{k}{Q}\right)  }}\frac{1}{\rho\left(
\xi\right)  ^{\beta}}\exp\left(  \frac{\alpha_{Q,\beta}\left(  \left(  \left(
1+\xi_{k}\right)  ^{-1/Q}\right)  \left(  A+\left(  1-\delta^{Q}\right)
^{1/Q}\omega_{k}\right)  \right)  ^{\frac{Q}{Q-1}}}{\left(  1-\delta
^{Q}\right)  ^{1/\left(  Q-1\right)  }}\right)  d\xi\\
& \leq\int_{B_{r\exp\left(  -\frac{k}{Q}\right)  }}\frac{1}{\rho\left(
\xi\right)  ^{\beta}}\exp\left(  \left(  1-\frac{\beta}{Q}\right)  \left[
\left(  \left(  1+\xi_{k}\right)  ^{-1/Q}\right)  \left(  C^{\prime}%
+k^{\frac{Q-1}{Q}}\right)  \right]  ^{\frac{Q}{Q-1}}\right)  d\xi\\
& \left(  \text{since }1-\left(  1+\xi_{k}\right)  ^{-1/Q}=O\left(  \left(
\frac{1}{k}\right)  ^{1/Q}\right)  \right)  \\
& \leq\int_{B_{r\exp\left(  -\frac{k}{Q}\right)  }}\frac{1}{\rho\left(
\xi\right)  ^{\beta}}\exp\left(  \left(  1-\frac{\beta}{Q}\right)  \left[
\left(  1-C^{\prime\prime}\left(  \frac{1}{k}\right)  ^{1/Q}\right)  \left(
C^{\prime}+k^{\frac{Q-1}{Q}}\right)  \right]  ^{\frac{Q}{Q-1}}\right)  d\xi\\
& \leq\int_{B_{r\exp\left(  -\frac{k}{Q}\right)  }}\frac{1}{\rho\left(
\xi\right)  ^{\beta}}\exp\left(  \left(  1-\frac{\beta}{Q}\right)  \left[
k^{\frac{Q-1}{Q}}-C^{\prime\prime}k^{\frac{Q-2}{Q}}\right]  ^{\frac{Q}{Q-1}%
}\right)  d\xi\\
& \leq C^{\prime\prime\prime\prime}\exp\left(  \left(  1-\frac{\beta}%
{Q}\right)  k\left[  1-C^{\prime\prime\prime}k^{\frac{-1}{Q}}\right]  -\left(
1-\frac{\beta}{Q}\right)  k\right)  \\
& \leq C^{\prime\prime\prime\prime}\exp\left(  -C^{\prime\prime\prime}\left(
1-\frac{\beta}{Q}\right)  k^{^{\frac{Q-1}{Q}}}\right)  <\infty
\end{align*}
for some positive constant $\ C^{\prime}$,$C^{\prime\prime}$,$C^{\prime
\prime\prime}$ and $C^{\prime\prime\prime\prime}$. We remark that this argument is
also suitable for the sequence constructed in \cite{J. M. do} for the sharpness of  $\tilde{M}_{n,u}=\left(  1-\left\Vert u\right\Vert _{W^{1,n}\left(  \mathbb{R}%
^{n}\right)  }\right)  ^{-1/\left(  n-1\right)  }$.
\end{remark}
\medskip

\section{$Q-$sub-Laplacian equations of exponential growth on $\mathbb{H}^{n}$
\label{pde}.}

\bigskip

In this section, let's consider the following nonlinear partial differential
equations on $\mathbb{H}^{n}:$%
\begin{equation}
-\mathrm{div}\left(  \left\vert \nabla_{\mathbb{H}}u\right\vert ^{Q-2}%
\nabla_{\mathbb{H}}u\right)  +V\left(  \xi\right)  \left\vert u\right\vert
^{Q-2}u=\frac{f\left(  u\right)  }{\rho\left(  \xi\right)  ^{\beta}},
\label{eq}%
\end{equation}
where $0\leq\beta<Q$.

The main features of this class of equations (\ref{eq}) are that it is defined
in the whole space $\mathbb{H}^{n}$ and involves critical growth and the
nonlinear operator is $Q$-sub-Laplacian. In spite of a possible failure of the
Palais--Smale (PS) compactness condition, we apply the minimax argument based
on the Concentration-Compactness Principle for $HW^{1,Q}\left(  \mathbb{H}%
^{n}\right)  $ -- Theorem \ref{Theorem2} as in \cite{J. M. do}.

The basic assumptions about $f$ and $V\ $are collected in the following:

\medskip

\textbf{(H1) Assumptions for potential }$V$

\medskip

The potential $V:\mathbb{H}^{n}\rightarrow\mathbb{R}$ is a continuous
potential, and satisfies:

\medskip

(V1) $V$ is a continuous function such that $V(\xi)\geq1\ $ for all $\xi
\in\mathbb{H}^{n}$, and one of the following two conditions:

(V2) $V(\xi)\rightarrow\infty$ as $\rho\left(  \xi\right)  $ $\rightarrow
\infty$; or more generally, for every $M>0$, $\left\vert \left\{  \xi
\in\mathbb{H}^{n}:V(\xi)>M\right\}  \right\vert <\infty$;

(V3) the function $V(\xi)^{-1}$ belongs to $L^{1}(\mathbb{H}^{n})$.

\bigskip

\textbf{(H2) Assumptions for }$f$

\medskip

The function $f\left(  t\right)  :\mathbb{R}\rightarrow\mathbb{R}$ behaves
like $\exp\left(  \alpha t^{\frac{Q}{Q-1}}\right)  $ when $\left\vert
t\right\vert \rightarrow\infty$. Precisely, we assume that $f$ satisfies the
following conditions:

\medskip

(f1) there exist constants $\alpha_{0},b_{0},b_{1}>0$ such that for all
$t\in\mathbb{R}$,%
\[
f\left(  t\right)  \leq b_{0}t^{Q-1}+b_{1}\Phi\left(  \alpha_{0}t^{\frac
{Q}{Q-1}}\right)  ;
\]

\medskip

(f2) there exists $\lambda>Q$ such that for all $\xi\in\mathbb{H}^{n}$ and
$t>0,$%
\[
0<\lambda F\left(  t\right)  :=\lambda\int_{0}^{t}f\left(  s\right)  ds\leq
tf\left(  t\right)  ;
\]

(f3) there exist constant $R_{0},M_{0}>0$ such that for all $\xi\in
\mathbb{H}^{n}$ and $t>R_{0},$%
\[
0\leq F\left(  t\right)  \leq M_{0}f\left(  t\right)  ;
\]

\medskip

(f4) there exist constant $\mu>Q$ and $C_{\mu}$ such that for all $t\geq0,$%
\begin{equation}
f\left(  t\right)  \geq C_{\mu}t^{\mu-1} \label{f4}%
\end{equation}
with $C_{\mu}$ satisfying%
\[
C_{\mu}>\left(  \frac{\alpha_{Q,\beta}}{\alpha_{0}}\right)  ^{\frac{\left(
Q-\mu\right)  \left(  Q-1\right)  }{Q}}\left(  \frac{\mu-Q}{\mu}\right)
^{\frac{\mu-Q}{Q}}\lambda_{\mu}^{\mu/Q},
\]
where%
\[
\lambda_{\mu}:=\underset{u\in\mathcal{S}\backslash0}{\inf}\frac{\left\Vert
u\right\Vert ^{Q}}{\int_{\mathbb{H}^{n}}\frac{\left\vert u\right\vert ^{\mu}%
}{\rho\left(  \xi\right)  ^{\beta}}d\xi}\text{;}%
\]

\medskip

(f5) $\underset{t\rightarrow0^{+}}{\lim\sup}\frac{F\left(  t\right)  }%
{k_{0}\left\vert s\right\vert ^{Q}}<\lambda_{Q}:=\underset{u\in\mathcal{S}%
\backslash0}{\inf}\frac{\left\Vert u\right\Vert ^{Q}}{\int_{\mathbb{H}^{n}%
}\frac{\left\vert u\right\vert ^{Q}}{\rho\left(  \xi\right)  ^{\beta}}d\xi}$.

\medskip

Since we are interested in nonnegative weak solutions, we also suppose the following

(f6) $F\left(  t\right)  =0$ if $t\leq0$.

\medskip

From condition (f1), we conclude that for all $t\in\mathbb{R}\,$,
\[
F\left(  t\right)  \leq b_{2}\Phi\left(  \alpha_{1}t^{\frac{Q}{Q-1}}\right)
\]
for some constant $b_{2},\alpha_{1}>0$. From (\ref{add}), we have
$\frac{F\left(  u\right)  }{\rho\left(  \xi\right)  ^{\beta}}\in L^{1}\left(
\mathbb{H}^{n}\right)  $ for all $u\in\mathcal{S}$. Therefore, the associated
functional to the equation (\ref{eq}) defined by%
\begin{equation}
\newline J\left(  u\right)  =\frac{1}{Q}\int_{\mathbb{H}^{n}}\left(
\left\vert \nabla_{\mathbb{H}}u\right\vert ^{Q}+V\left(  \xi\right)
\left\vert u\right\vert ^{Q}\right)  d\xi-\int_{\mathbb{H}^{n}}\frac{F\left(
u\right)  }{\rho\left(  \xi\right)  ^{\beta}}d\xi\label{energy}%
\end{equation}
is well-defined. Moreover, $J$ is a $C^{1}$ functional on $\mathcal{S}$ with
\[
DJ\left(  u\right)  v=\int_{\mathbb{H}^{n}}\left(  \left\vert \nabla
_{\mathbb{H}}u\right\vert ^{Q-2}\nabla_{\mathbb{H}}u\nabla_{\mathbb{H}%
}v+V\left(  \xi\right)  \left\vert u\right\vert ^{Q-2}uv\right)  d\xi
-\int_{\mathbb{H}^{n}}\frac{f\left(  u\right)  v}{\rho\left(  \xi\right)
^{\beta}}d\xi
\]
for all $v\in\mathcal{S}$. Thus, $DJ\left(  u\right)  =0$ if and only if $u$
$\in\mathcal{S}$ is a weak solution to equation (\ref{eq}).

We define the following space associated with the potential $V$:%
\[
\mathcal{S}=\left\{  u\in HW^{1,Q}\left(  \mathbb{H}^{n}\right)  :\left\Vert
u\right\Vert <\infty\right\}
\]
with the norm $\left\Vert u\right\Vert :=\left(  \int_{\mathbb{H}^{n}}\left(
\left\vert \nabla_{\mathbb{H}}u\right\vert ^{Q}+V\left(  \xi\right)
\left\vert u\right\vert ^{Q}\right)  d\xi\right)  ^{1/Q}$.

From the hypothesis (H1), we have the following compactness result:

\begin{lemma}
If $V\left(  \xi\right)  $ satisfy the hypothesis (H1), then for all $Q\leq
q<\infty$, the embedding
\end{lemma}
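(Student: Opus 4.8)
The statement to be proved is that, under the hypotheses (H1), the embedding $\mathcal{S}\hookrightarrow L^{q}(\mathbb{H}^{n})$ is compact for every $Q\le q<\infty$; this is the subelliptic analogue of the compactness lemma used by do \'O et al.\ in \cite{J. M. do}, and I would follow the same scheme. First I record the continuous embeddings. Since $V(\xi)\ge 1$ we have $\|u\|_{HW^{1,Q}(\mathbb{H}^{n})}\le\|u\|$, so $\mathcal{S}\hookrightarrow HW^{1,Q}(\mathbb{H}^{n})$ continuously; and the subelliptic (Folland--Stein) Sobolev embedding — equivalently Lemma \ref{Lemma2} with $\beta=0$, using $\Phi(t)\ge t^{j}/j!$ for every $j\ge Q-1$ — gives $HW^{1,Q}(\mathbb{H}^{n})\hookrightarrow L^{s}(\mathbb{H}^{n})$ continuously for all $s\in[Q,\infty)$. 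Hence a bounded sequence $\{u_{k}\}\subset\mathcal{S}$ is bounded in every $L^{s}(\mathbb{H}^{n})$, $s\ge Q$, and, along a subsequence, $u_{k}\rightharpoonup u$ weakly in $\mathcal{S}$, therefore weakly in $HW^{1,Q}(\mathbb{H}^{n})$ and in each $L^{s}(\mathbb{H}^{n})$; note also $\|u_{k}-u\|$ is bounded and $\int_{\mathbb{H}^{n}}V|u_{k}-u|^{Q}\,d\xi\le\|u_{k}-u\|^{Q}$.

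The goal is to upgrade weak to strong $L^{q}$ convergence. By the local, Rellich--Kondrachov-type compactness $HW^{1,Q}(B_{R})\hookrightarrow\hookrightarrow L^{Q}(B_{R})$ (classical for H\"ormander systems and Carnot groups), $u_{k}\to u$ strongly in $L^{Q}(B_{R})$ for every metric ball $B_{R}$, and a diagonal argument over $R\in\mathbb{N}$ yields, after a further subsequence, $u_{k}\to u$ almost everywhere on $\mathbb{H}^{n}$.

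The one genuinely delicate point is the loss of compactness at infinity, which is recovered from the structure of $V$. Both alternatives in (H1) imply that the sublevel sets $\{\xi:V(\xi)\le M\}$ have finite measure for every $M>0$: under (V2) this is clear (such a set is bounded, resp.\ of finite measure, by hypothesis), and under (V3) it follows from $\tfrac{1}{M}\,|\{V\le M\}|\le\int_{\{V\le M\}}V^{-1}\,d\xi\le\|V^{-1}\|_{L^{1}(\mathbb{H}^{n})}$. Now fix $\varepsilon>0$ and set $q_{1}=q+1>q$. On $\{V>M\}$,
\[
\int_{\{V>M\}}|u_{k}-u|^{Q}\,d\xi\le\frac{1}{M}\int_{\mathbb{H}^{n}}V|u_{k}-u|^{Q}\,d\xi\le\frac{C}{M},
\]
and interpolating between $L^{Q}$ and the uniformly bounded $L^{q_{1}}$ norm gives $\|u_{k}-u\|_{L^{q}(\{V>M\})}\le C M^{-\sigma}$ for some $\sigma>0$ independent of $k$; choose $M=M(\varepsilon)$ so that this is $\le\varepsilon$ for all $k$. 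On the complement $\{V\le M\}$, now of finite measure, the functions $|u_{k}-u|^{q}$ are bounded in $L^{q_{1}/q}$ with $q_{1}/q>1$, hence uniformly integrable, and converge to $0$ almost everywhere; Vitali's convergence theorem gives $\int_{\{V\le M\}}|u_{k}-u|^{q}\,d\xi\to0$. Adding the two contributions, $\limsup_{k}\|u_{k}-u\|_{L^{q}(\mathbb{H}^{n})}\le\varepsilon$, and letting $\varepsilon\to0$ completes the proof.

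I expect the tail estimate on $\{V>M\}$ together with the finite-measure/Vitali argument on $\{V\le M\}$ to be the heart of the matter; the passage from $q=Q$ to all $q\ge Q$ is then a one-line interpolation, permitted by the fact that $\mathcal{S}$ embeds into every $L^{s}$, $s\ge Q$, while the local compactness on balls enters only as a black box. The argument is insensitive to the subelliptic structure and would carry over verbatim to Carnot groups.
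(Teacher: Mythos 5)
Your proposal is correct, but it follows a noticeably different route from the paper's proof, and in fact a more general one. The paper first reduces to the case $u_k\rightharpoonup 0$, then uses only the coercive form of (V2): given $\varepsilon>0$ it picks a ball $B_R$ outside of which $V\geq 2C/\varepsilon$, obtains $\int_{B_R}|u_k|^Q\,d\xi<\varepsilon/2$ from the compact embedding $HW^{1,Q}(B_R)\hookrightarrow L^Q(B_R)$, bounds the exterior part by $\frac{\varepsilon}{2C}\int V|u_k|^Q\,d\xi<\varepsilon/2$, and so gets $u_k\to 0$ in $L^Q(\mathbb{H}^n)$; only at the end does it pass to general $q\geq Q$ by the H\"older interpolation $\int|u_k|^q\leq\left(\int|u_k|^p\right)^{\frac{q-Q}{p-Q}}\left(\int|u_k|^Q\right)^{\frac{p-q}{p-Q}}$ together with the uniform $L^p$ bounds coming from the Trudinger--Moser inequality (\ref{unbounded}). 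You instead decompose by level sets of $V$: the tail $\{V>M\}$ is controlled by $\frac{1}{M}\int V|u_k-u|^Q$ plus interpolation, and the bulk $\{V\leq M\}$, which has finite measure under either alternative of (H1), is handled by almost everywhere convergence (from local compactness and a diagonal argument) combined with uniform integrability and Vitali's theorem. What the paper's argument buys is simplicity: no a.e.\ convergence, uniform integrability or Vitali bookkeeping, and the smallness is obtained directly for the full $L^Q$ norm before a single interpolation step. What yours buys is generality and robustness: it genuinely covers the ``more generally'' clause of (V2) --- read, as you correctly do, as finiteness of the measure of the sublevel sets $\{V\leq M\}$, the displayed $\left\vert\{V>M\}\right\vert<\infty$ in the paper being evidently a slip --- as well as (V3), for which the paper's written proof (which needs $V$ large outside a fixed ball) does not literally apply; your level-set decomposition is the natural fix, and your observation that $\left\vert\{V\leq M\}\right\vert\leq M\|V^{-1}\|_{L^1}$ under (V3) is exactly what is needed. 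Both proofs use the local compact embedding on metric balls as a black box, and both are insensitive to the subelliptic structure.
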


\[
\mathcal{S}\hookrightarrow L^{q}\left(  \mathbb{H}^{n}\right)
\]
is compact.

\begin{proof}
The proof is analogous to the proof for the Euclidean case in \cite{Costa,
Rabinowitz}, for the completeness, we give the details here.

 Let $\left\{
u_{k}\right\}  $ be a sequence such that $\left\Vert u_{k}\right\Vert ^{Q}<C$.
 In order to prove this result, we only need to show that
$u_{k}\rightarrow0$ strongly in $L^{q}\left(  \mathbb{H}^{n}\right)  $ for any
$Q\leq q<\infty$, whenever $u_{k}\rightharpoonup0$ weakly in $\mathcal{S}$, as
$k\rightarrow\infty$. \

For any $\varepsilon>0$, from (V2), we can choose some $R>0$ such that
\begin{equation}
V\left(  \xi\right)  \geq\frac{2C}{\varepsilon}\label{addd}%
\end{equation}
for all $\xi$ satisfying $\rho\left(  \xi\right)  \geq R$. Since the embedding $HW^{1,Q}\left(  B_{R}\right)  \hookrightarrow L^{q}\left(
B_{R}\right)  $ is compact, we know $u_{k}\rightarrow0$ strongly in
$L^{q}\left(  B_{R}\right)  $, and then there exists a integer $N>0$ such
that when $k>N$,
\begin{equation}
\int_{B_{R}}\left\vert u_{k}\right\vert ^{Q}d\xi<\frac{\varepsilon}%
{2}.\label{add001}%
\end{equation}
On the other hand, from (\ref{addd}) we have
\[
\frac{2C}{\varepsilon}\int_{\mathbb{H}^{n}\backslash B_{R}}\left\vert
u_{k}\right\vert ^{Q}d\xi\leq\int_{\mathbb{H}^{n}\backslash B_{R}}V\left(
\xi\right)  \left\vert u_{k}\right\vert ^{Q}d\xi<C,
\]
that is
\begin{equation}
\int_{\mathbb{H}^{n}\backslash B_{R}}\left\vert u_{k}\right\vert ^{Q}%
d\xi<\frac{\varepsilon}{2}.\label{add002}%
\end{equation}
Combine (\ref{add001}) and (\ref{add002}) we obtain \
\begin{equation}
\int_{\mathbb{H}^{n}}\left\vert u_{k}\right\vert ^{Q}d\xi\leq\varepsilon
.\label{add000}%
\end{equation}
For any $q<p<\infty$, we define
\[
\lambda=\frac{Q\left(  p-q\right)  }{q\left(  p-Q\right)  }\mathcal{\ }%
\ \text{and }\mu=\frac{p\left(  q-Q\right)  }{q\left(  p-Q\right)  }.
\]
Then $\lambda>0$ and $\mu>0$. $\ $By H\"{o}lder's inequality, the
Trudinger-Moser inequality (\ref{unbounded}) and (\ref{add000}), we have
\begin{align*}
\int_{\mathbb{H}^{n}}\left\vert u_{k}\right\vert ^{q}d\xi &  =\int
_{\mathbb{H}^{n}}\left\vert u_{k}\right\vert ^{q\lambda+\mu q}d\xi\\
&  =\left(  \int_{\mathbb{H}^{n}}\left\vert u_{k}\right\vert ^{q\mu\frac
{p-Q}{q-Q}}d\xi\right)  ^{\frac{q-Q}{p-Q}}\cdot\left(  \int_{\mathbb{H}^{n}%
}\left\vert u_{k}\right\vert ^{q\lambda\frac{p-Q}{p-q}}d\xi\right)
^{\frac{p-q}{p-Q}}\\
&  =\left(  \int_{\mathbb{H}^{n}}\left\vert u_{k}\right\vert ^{p}d\xi\right)
^{\frac{q-Q}{p-Q}}\cdot\left(  \int_{\mathbb{H}^{n}}\left\vert u_{k}%
\right\vert ^{Q}d\xi\right)  ^{\frac{p-q}{p-Q}}\\
&  \leq c\left(  \underset{\left\Vert f\right\Vert _{HW^{1,Q}\left(
\mathbb{H}^{n}\right)  }\leq1}{\underset{f\in HW^{1,Q}\left(  \mathbb{H}%
^{n}\right)  }{\sup}}\int_{\mathbb{H}^{n}}\Phi\left(  \alpha_{Q}f\left(
\xi\right)  ^{\frac{Q}{Q-1}}\right)  d\xi\right)  ^{^{\frac{q-Q}{p-Q}}}%
\cdot\left(  \int_{\mathbb{H}^{n}}\left\vert u_{k}\right\vert ^{Q}d\xi\right)
^{\frac{p-q}{p-Q}}\\
&  \leq c\varepsilon^{\frac{p-q}{p-Q}}.
\end{align*}
The proof is finished.
\end{proof}

\subsection{\textbf{Palais--Smale compactness condition}}

In this subsection, we analyze the compactness of Palais--Smale sequences of
the functional $J$. This is the crucial step in the study of existence results
for equation (\ref{eq}).

First, we recall the definition of Palais-Smale Condition:
\begin{definition}
[Palais--Smale Condition] A sequence$\ \left\{  u_{k}\right\}  \ $ in
$\mathcal{S}$ is called a local Palais--Smale sequence at level $d$ for the
\bigskip functional $J$ ($\left(  PS\right)  _{d}$ sequence), if
\[
J\left(  u_{k}\right)  \rightarrow d\text{ and }\left\Vert DJ\left(
u_{k}\right)  \right\Vert \rightarrow0\text{, as }k\rightarrow\infty,
\]
the functional $J$ is said to satisfy the Palais--Smale condition at level $d$
($\left(  PS\right)  _{d}$ condition), if any $\left(  PS\right)  _{d}$
sequence has a convergent subsequence.
\end{definition}

\begin{lemma}
\label{PS}Under the hypotheses of (H1) and (H2). The functional $J$
satisfies the Palais--Smale condition at level $d$ for any $d<\frac{1}%
{Q}\left(  \frac{\alpha_{Q,\beta}}{\alpha_{0}}\right)  ^{Q-1}$.
\end{lemma}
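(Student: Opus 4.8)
The plan is to run the classical scheme for $(PS)$–compactness with critical exponential nonlinearities, as in do \'O et al. \cite{J. M. do}, but invoking the Concentration–Compactness Principle of Theorem \ref{Theorem2} at the single point where, in the Euclidean setting, one would need a P\'olya--Szeg\H{o}-based concentration-compactness argument unavailable on $\mathbb{H}^{n}$.

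\emph{Step 1 (boundedness, weak limit) and Step 2 ($u$ is a weak solution).} First I would show that a $(PS)_{d}$ sequence $\{u_{k}\}$ is bounded in $\mathcal{S}$: by (f2), $J(u_{k})-\tfrac{1}{\lambda}DJ(u_{k})u_{k}\ge(\tfrac{1}{Q}-\tfrac{1}{\lambda})\|u_{k}\|^{Q}$, while the left-hand side is $d+o(1)+o(1)\|u_{k}\|$, and $\lambda>Q$ forces $\sup_{k}\|u_{k}\|<\infty$. Passing to a subsequence, $u_{k}\rightharpoonup u$ in $\mathcal{S}$, and by the compact embedding $\mathcal{S}\hookrightarrow L^{q}(\mathbb{H}^{n})$ just proved, $u_{k}\to u$ in $L^{q}(\mathbb{H}^{n})$ for every $q\ge Q$ and $u_{k}\to u$ a.e. From $DJ(u_{k})u_{k}=o(1)$ and the boundedness of $\|u_{k}\|$ one gets $\sup_{k}\int_{\mathbb{H}^{n}}\rho^{-\beta}f(u_{k})u_{k}\,d\xi<\infty$; splitting the integrals on $\{u_{k}\le K\}$ versus $\{u_{k}>K\}$ and using $f(u_{k})\le K^{-1}f(u_{k})u_{k}$ together with $(f3)$ on the latter set shows $\{\rho^{-\beta}f(u_{k})\}$ and $\{\rho^{-\beta}F(u_{k})\}$ are uniformly integrable on bounded sets, so Vitali's theorem gives $f(u_{k})\to f(u)$ and $F(u_{k})\to F(u)$ in $L^{1}_{\mathrm{loc}}(\mathbb{H}^{n},\rho^{-\beta}d\xi)$; the tails over $\mathbb{H}^{n}\setminus B_{R}$ are made uniformly small as $R\to\infty$ by combining $(f1)$--$(f3)$ with the fact that, under (V2) (resp. (V3)), $\sup_{k}\int_{\mathbb{H}^{n}\setminus B_{R}}|u_{k}|^{Q}\,d\xi\to0$, whence $\int_{\mathbb{H}^{n}}\rho^{-\beta}F(u_{k})\to\int_{\mathbb{H}^{n}}\rho^{-\beta}F(u)$. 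Letting $k\to\infty$ in $DJ(u_{k})\varphi=o(1)$ for $\varphi\in C_{0}^{\infty}(\mathbb{H}^{n})$ (with the standard Boccardo--Murat-type a.e. convergence of $\nabla_{\mathbb{H}}u_{k}$ for the $Q$-Laplacian term) yields $DJ(u)=0$; in particular $\|u\|^{Q}=\int_{\mathbb{H}^{n}}\rho^{-\beta}f(u)u\,d\xi$ and, by (f2), $J(u)\ge(\tfrac{1}{Q}-\tfrac{1}{\lambda})\|u\|^{Q}\ge0$.

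\emph{Step 3 (the escaping Dirichlet energy is subcritical; Theorem \ref{Theorem2} enters here).} Combining $J(u_{k})\to d$, the just-established $\int\rho^{-\beta}F(u_{k})\to\int\rho^{-\beta}F(u)$, and $J(u)=\tfrac1Q\|u\|^{Q}-\int\rho^{-\beta}F(u)$, one obtains $\lim_{k}\|u_{k}\|^{Q}=Q(d-J(u))+\|u\|^{Q}$; since $\|u_{k}\|_{Q}\to\|u\|_{Q}$ and $\int V|u|^{Q}\le\liminf_{k}\int V|u_{k}|^{Q}$ by Fatou, this forces $\limsup_{k}\big(\|u_{k}\|_{HW^{1,Q}}^{Q}-\|u\|_{HW^{1,Q}}^{Q}\big)\le Q(d-J(u))\le Qd<(\alpha_{Q,\beta}/\alpha_{0})^{Q-1}$. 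Take a further subsequence with $\|u_{k}\|_{HW^{1,Q}}\to\sigma_{\infty}$ and set $v_{k}=u_{k}/\|u_{k}\|_{HW^{1,Q}}$, so $\|v_{k}\|_{HW^{1,Q}}=1$ and $v_{k}\rightharpoonup u/\sigma_{\infty}$. If $u\neq0$, the inequality $\sigma_{\infty}^{Q}-\|u\|_{HW^{1,Q}}^{Q}<(\alpha_{Q,\beta}/\alpha_{0})^{Q-1}$ is exactly the assertion $\alpha_{0}\sigma_{\infty}^{Q/(Q-1)}/\alpha_{Q,\beta}<\tilde{M}_{Q,u/\sigma_{\infty}}$, so there are $q>1$ and an admissible $p<\tilde{M}_{Q,u/\sigma_{\infty}}$ with $q\alpha_{0}|u_{k}|^{\frac{Q}{Q-1}}\le\alpha_{Q,\beta}p\,|v_{k}|^{\frac{Q}{Q-1}}$ for $k$ large, and Theorem \ref{Theorem2} gives $\sup_{k}\int_{\mathbb{H}^{n}}\rho^{-\beta}\Phi\big(q\alpha_{0}|u_{k}|^{\frac{Q}{Q-1}}\big)\,d\xi<\infty$. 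If $u=0$, then $\sigma_{\infty}^{Q}<(\alpha_{Q,\beta}/\alpha_{0})^{Q-1}$ and the same exponential bound follows directly from Lemma \ref{Lemma2}.

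\emph{Step 4 (conclusion) and the main obstacle.} The uniform bound on $\int\rho^{-\beta}\Phi(q\alpha_{0}|u_{k}|^{\frac{Q}{Q-1}})$ with $q>1$ makes $\{\rho^{-\beta}f(u_{k})u_{k}\}$ uniformly integrable and tight on $\mathbb{H}^{n}$: by $(f1)$, H\"older's inequality with exponents $q,q'$, an elementary inequality of the form $\Phi(t)^{q}\le C\Phi(qt)$, the strong convergence $u_{k}\to u$ in $L^{q'}$ (with $q'\ge Q$ since $q$ may be taken close to $1$), and $\beta<Q$ to control $\rho^{-\beta}$ near the origin. Hence $\int_{\mathbb{H}^{n}}\rho^{-\beta}f(u_{k})u_{k}\,d\xi\to\int_{\mathbb{H}^{n}}\rho^{-\beta}f(u)u\,d\xi=\|u\|^{Q}$, and since $\|u_{k}\|^{Q}=\int_{\mathbb{H}^{n}}\rho^{-\beta}f(u_{k})u_{k}\,d\xi+o(1)$ we get $\|u_{k}\|\to\|u\|$; as $\mathcal{S}$ is uniformly convex and $u_{k}\rightharpoonup u$, it follows that $u_{k}\to u$ strongly in $\mathcal{S}$. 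The delicate part is Step 3: one must turn the subcriticality of the energy level, $d<\tfrac1Q(\alpha_{Q,\beta}/\alpha_{0})^{Q-1}$, into subcriticality of the Dirichlet energy that escapes the weak limit, and then exploit this to gain the decisive room $q>1$ in the exponential integral — precisely where Lions-type concentration-compactness (built on P\'olya--Szeg\H{o}) would be used in $\mathbb{R}^{n}$ and where Theorem \ref{Theorem2} is the substitute on $\mathbb{H}^{n}$. A secondary technical point is the uniform smallness of all the tail integrals over $\mathbb{H}^{n}\setminus B_{R}$, handled by the coercivity assumptions (V2)/(V3) on $V$.
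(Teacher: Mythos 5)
Your proposal is correct, and its decisive mechanism is the same as the paper's: convert the level restriction $d<\frac{1}{Q}\left(\frac{\alpha_{Q,\beta}}{\alpha_{0}}\right)^{Q-1}$, via the limit identity $\lim_{k}\|u_{k}\|^{Q}=Q\left(d-J(u)\right)+\|u\|^{Q}$ and $J(u)\geq 0$, into enough room to pick $q>1$ so that Theorem \ref{Theorem2} (or Lemma \ref{Lemma2} when $u=0$) yields a uniform bound on $\int\rho^{-\beta}\Phi\left(q\alpha_{0}u_{k}^{Q/(Q-1)}\right)d\xi$, and then use (f1) plus H\"older and the compact embedding to pass to the limit in the nonlinear term. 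Where you differ from the paper is organization: the paper splits into the cases $d=0$ and $d\neq 0$, proves separately that $u\neq 0$ when $d\neq 0$ (by a contradiction argument using Lemma \ref{Lemma2}), further distinguishes $\|\tilde u\|=1$ from $\|\tilde u\|<1$, and in the last case concludes strong convergence through the monotonicity inequality $\left(|a|^{Q-2}a-|b|^{Q-2}b\right)\left(a-b\right)\geq 2^{2-Q}|a-b|^{Q}$; you instead run a single unified argument, normalize $v_{k}=u_{k}/\|u_{k}\|_{HW^{1,Q}}$ so that the hypothesis $\|v_{k}\|_{HW^{1,Q}}=1$ of Theorem \ref{Theorem2} is met verbatim (the paper normalizes in the $\mathcal{S}$-norm, which only gives $\|\tilde u_{k}\|_{HW^{1,Q}}\leq 1$, and your Fatou step transferring the $\mathcal{S}$-norm gap to the $HW^{1,Q}$-norm gap is the small extra price for this), and you finish everywhere by norm convergence plus uniform convexity (Radon--Riesz), which the paper itself uses only in its Case 1. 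Your route buys a cleaner, case-free proof that automatically absorbs the situations $u=0$ and $d=0$; the paper's route buys a more elementary endgame (the explicit $Q$-Laplacian monotonicity estimate) and leans on \cite{Lam} for the $L^{1}$ convergences that you re-derive by a Vitali/tail argument — both of which are standard and correct, so I see no gap.
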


\begin{proof}
The proof is analogous to the proof of \cite[Proposition 4.1]{J. M. do}. For
the completeness, we give the details here.

Let $\left\{  u_{k}\right\}  $ be a $\left(  PS\right)  _{d}$ sequence for
$J$, that is,
\begin{equation}
J\left(  u_{k}\right)  \rightarrow d\text{ } \label{c}%
\end{equation}
and $\left\vert DJ\left(  u_{k}\right)  v\right\vert \rightarrow0$ for all
$v\in\mathcal{S}$, as$\ $ $k\rightarrow\infty$. Then
\begin{align}
&  \int_{\mathbb{H}^{n}}\frac{f\left(  u_{k}\right)  v}{\rho\left(
\xi\right)  ^{\beta}}d\xi-\int_{\mathbb{H}^{n}}\left(  \left\vert
\nabla_{\mathbb{H}}u_{k}\right\vert ^{Q-2}\nabla_{\mathbb{H}}u_{k}%
\nabla_{\mathbb{H}}v+V\left(  \xi\right)  \left\vert u_{k}\right\vert
^{Q-2}u_{k}v\right)  d\xi\label{tend to 0}\\
&  \leq\varepsilon_{k}\left\Vert v\right\Vert \nonumber
\end{align}
for all $v\in\mathcal{S}$, where $\varepsilon_{k}\rightarrow0$, as
$k\rightarrow\infty$.\ \

Choosing $v=u_{k}$ in (\ref{tend to 0}), by (\ref{c}) we get%
\begin{align*}
&  \int_{\mathbb{H}^{n}}\frac{f\left(  u_{k}\right)  u_{k}}{\rho\left(
\xi\right)  ^{\beta}}-\int_{\mathbb{H}^{n}}\frac{F\left(  u_{k}\right)
\lambda}{\rho\left(  \xi\right)  ^{\beta}}+\frac{\lambda}{Q}\left\Vert
u_{k}\right\Vert ^{Q}-d\lambda-\left\Vert u_{k}\right\Vert ^{Q}\\
&  \leq\varepsilon_{k}\left\Vert u_{k}\right\Vert ,
\end{align*}
From (f2), we have
\[
\frac{\lambda-Q}{Q}\left\Vert u_{k}\right\Vert ^{Q}\leq c+\varepsilon
_{k}\left\Vert u_{k}\right\Vert ,
\]
hence, $u_{k}$ is bounded in $\mathcal{S}$. Since for any $q\geq Q$, the
embedding $\mathcal{S}\hookrightarrow L^{q}\left(  \mathbb{H}^{n}\right)  $ is
compact, we can assume that,%
\begin{align}
u_{k}  &  \rightharpoonup u\text{ weakly in }\mathcal{S},\nonumber\\
u_{k}  &  \rightarrow u\text{ strongly in }L^{q}\left(  \mathbb{H}^{n}\right)
\text{ for any }q\geq Q,\label{limit}\\
u_{k}  &  \rightarrow u\text{ for almost all }\xi\in\mathbb{H}^{n}%
\text{.}\nonumber
\end{align}
From (\ref{limit}), we can verify that
\begin{equation}
\int_{\mathbb{H}^{n}}\frac{\left\vert u_{k}-u\right\vert ^{s}}{\rho\left(
\xi\right)  ^{\beta}}d\xi\rightarrow0\text{ as }k\rightarrow\infty
\label{limite3}%
\end{equation}
for any $s\in\left[  Q,\infty\right)  $ and $\beta\in\left[  0,Q\right)  $.
Actually, for any $R>0$,
\[
\int_{\mathbb{H}^{n}}\frac{\left\vert u_{k}-u\right\vert ^{s}}{\rho\left(
\xi\right)  ^{\beta}}d\xi\leq\left(  \int_{B_{R}}+\int_{\mathbb{H}%
^{n}\backslash B_{R}}\right)  \frac{\left\vert u_{k}-u\right\vert ^{s}}%
{\rho\left(  \xi\right)  ^{\beta}}d\xi=I+II,
\]
by H\"{o}lder's inequality, for any $p,\gamma\in\left(  1,\infty\right)  $, we
have
\begin{align*}
I  &  \leq\left(  \int_{B_{R}}\left\vert u_{k}-u\right\vert ^{sp}d\xi\right)
^{1/p}\cdot\left(  \int_{B_{R}}\frac{1}{\rho\left(  \xi\right)  ^{\beta
p^{\prime}}}d\xi\right)  ^{1/p^{\prime}},\text{ and}\\
II  &  \leq\left(  \int_{\mathbb{H}^{n}\backslash B_{R}}\left\vert
u_{k}-u\right\vert ^{s\gamma}d\xi\right)  ^{1/\gamma}\cdot\left(
\int_{\mathbb{H}^{n}\backslash B_{R}}\frac{1}{\rho\left(  \xi\right)
^{\beta\gamma^{\prime}}}d\xi\right)  ^{1/\gamma^{\prime}},
\end{align*}
where $p^{\prime}=\frac{p}{p-1}$ and $\gamma^{\prime}=\frac{\gamma}{\gamma-1}$.

Choosing $p^{\prime}$ and $\gamma$\ sufficiently closed to $1$ such that
\[
\beta p^{\prime}<1\text{ and }\beta\gamma^{\prime}>Q,
\]
then by (\ref{limit}), we get $\int_{\mathbb{H}^{n}}\frac{\left\vert
u_{k}-u\right\vert ^{s}}{\rho\left(  \xi\right)  ^{\beta}}d\xi\rightarrow0$.

Thanks to \cite[Lemma 5.5 and (6.7)]{Lam}, we have
\begin{equation}
\left\{
\begin{array}
[c]{c}%
\frac{f\left(  u_{k}\right)  }{\rho\left(  \xi\right)  ^{\beta}}%
\rightarrow\frac{f\left(  u\right)  }{\rho\left(  \xi\right)  ^{\beta}}\text{
}\ \ \ \ \ \ \ \ \ \ \ \ \ \ \ \ \ \ \ \ \ \ \ \ \ \ \ \text{in }L_{loc}%
^{1}\left(  \mathbb{H}^{n}\right) \\
\frac{F\left(  u_{k}\right)  }{\rho\left(  \xi\right)  ^{\beta}}%
\rightarrow\frac{F\left(  u\right)  }{\rho\left(  \xi\right)  ^{\beta}}\text{
\ \ \ \ \ \ \ \ \ \ \ \ \ \ \ \ \ \ \ \ \ \ \ \ \ \ \ in }L^{1}\left(
\mathbb{H}^{n}\right) \\
\left\vert \nabla_{\mathbb{H}}u_{k}\right\vert ^{Q-2}\nabla_{\mathbb{H}}%
u_{k}\rightharpoonup\left\vert \nabla_{\mathbb{H}}u\right\vert ^{Q-2}%
\nabla_{\mathbb{H}}u\text{ \ \ \ \ \ weakly in }\left(  L_{loc}^{Q/\left(
Q-1\right)  }\left(  \mathbb{H}^{n}\right)  \right)  ^{2n}.
\end{array}
\right.  \label{add111}%
\end{equation}
From this convergence and passing the limit in (\ref{tend to 0}), we get%
\[
\int_{\mathbb{H}^{n}}\frac{f\left(  u\right)  v}{\rho\left(  \xi\right)
^{\beta}}d\xi-\int_{\mathbb{H}^{n}}\left(  \left\vert \nabla_{\mathbb{H}%
}u\right\vert ^{Q-2}\nabla_{\mathbb{H}}u\nabla_{\mathbb{H}}v+V\left(
\xi\right)  \left\vert u\right\vert ^{Q-2}uv\right)  d\xi=0
\]
for any $v\in C_{0}^{\infty}\left(  \mathbb{H}^{n}\right)  $. By density,
taking $v=u$, we have
\[
\int_{\mathbb{H}^{n}}\frac{f\left(  u\right)  u}{\rho\left(  \xi\right)
^{\beta}}d\xi-\int_{\mathbb{H}^{n}}\left(  \left\vert \nabla_{\mathbb{H}%
}u\right\vert ^{Q}+V\left(  \xi\right)  \left\vert u\right\vert ^{Q}\right)
d\xi=0,
\]
from (f2), we get
\[
\int_{\mathbb{H}^{n}}\left(  \left\vert \nabla_{\mathbb{H}}u\right\vert
^{Q}+V\left(  \xi\right)  \left\vert u\right\vert ^{Q}\right)  d\xi\geq
Q\int_{\mathbb{H}^{n}}\frac{F\left(  u\right)  }{\rho\left(  \xi\right)
^{\beta}}d\xi,
\]
thus, $J\left(  u\right)  \geq0$.

In the following, we prove the strong convergence of $\left\{  u_{k}\right\}
$. For this purpose, we split the proof into two cases:

Case 1: $d=0$. From (\ref{add111}) and (\ref{c}), we have
\[
\left\Vert u\right\Vert ^{Q}\leq\underset{k}{\lim}\left\Vert u_{k}\right\Vert
^{Q}=Q\int_{\mathbb{H}^{n}}\frac{F\left(  u\right)  }{\rho\left(  \xi\right)
^{\beta}}d\xi,\text{ }%
\]
hence $J\left(  u\right)  \leq0$. Therefore $J\left(  u\right)  =0$ and
$\underset{k}{\lim}\left\Vert u_{k}\right\Vert ^{Q}$\ $=\left\Vert
u\right\Vert ^{Q}$. Since $\mathcal{S}$ is a uniformly convex Banach space, by
Radon's Theorem, $u_{k}\rightarrow$ $u$ strongly in $\mathcal{S}$.

Case 2: $d\neq0$. We first claim that $u\neq0$. We assume by contradiction for
$u=0$. By (\ref{add111}), (f2) and (f1), we have $\int_{\mathbb{H}^{n}}%
\frac{F\left(  u_{k}\right)  }{\rho\left(  \xi\right)  ^{\beta}}%
d\xi\rightarrow0$, as $k\rightarrow\infty$. From (\ref{c}) we obtain
\begin{equation}
\int_{\mathbb{H}^{n}}\left(  \left\vert \nabla_{\mathbb{H}}u_{k}\right\vert
^{Q}+V\left(  \xi\right)  \left\vert u_{k}\right\vert ^{Q}\right)
d\xi\rightarrow dQ\text{ as }k\rightarrow\infty.\label{contradicit}%
\end{equation}
Since $d<\frac{1}{Q}\left(  \frac{\alpha_{Q,\beta}}{\alpha_{0}}\right)
^{Q-1}$, we can choose some $q>1$ close to $1$ sufficiently such that
\begin{align*}
&  q\alpha_{0}\left(  \int_{\mathbb{H}^{n}}\left(  \left\vert \nabla
_{\mathbb{H}}u_{k}\right\vert ^{Q}+\left\vert u_{k}\right\vert ^{Q}\right)
d\xi\right)  ^{1/\left(  Q-1\right)  }\\
&  \leq q\alpha_{0}\left(  \int_{\mathbb{H}^{n}}\left(  \left\vert
\nabla_{\mathbb{H}}u_{k}\right\vert ^{Q}+V\left(  \xi\right)  \left\vert
u_{k}\right\vert ^{Q}\right)  d\xi\right)  ^{1/\left(  Q-1\right)  }%
<\alpha_{Q,q\beta}.
\end{align*}
Then, by Lemma \ref{Lemma2}, we have
\begin{equation}
\int_{\mathbb{H}^{n}}\frac{\Phi\left(  q\alpha_{0}u_{k}^{\frac{Q}{Q-1}%
}\right)  }{\rho\left(  \xi\right)  ^{q\beta}}d\xi\leq c.\label{bound}%
\end{equation}

By H\"{o}lder's inequality, combining (f1), (\ref{bound}), (\ref{limit}) and
(\ref{limite3}) we get
\[
\int_{\mathbb{H}^{n}}\frac{f\left(  u_{k}\right)  u_{k}}{\rho\left(
\xi\right)  ^{\beta}}d\xi\leq c\int_{\mathbb{H}^{n}}\frac{u_{k}^{Q}}%
{\rho\left(  \xi\right)  ^{\beta}}d\xi+c\left(  \int_{\mathbb{H}^{n}}%
\frac{\Phi\left(  q\alpha_{0}u_{k}^{\frac{Q}{Q-1}}\right)  }{\rho\left(
\xi\right)  ^{q\beta}}d\xi\right)  ^{1/q}\left(  \int_{\mathbb{H}^{n}}%
u_{k}^{q^{\prime}}d\xi\right)  ^{1/q^{\prime}}\rightarrow0,
\]
where $q^{\prime}=\frac{q}{q-1}$.

On the other hand, since $u_{k}$ is a $\left(  PS\right)  _{d}$ sequence,
$DJ\left(  u_{k}\right)  u_{k}\rightarrow0$, i.e.,%
\[
\int_{\mathbb{H}^{n}}\frac{f\left(  u_{k}\right)  u_{k}}{\rho\left(
\xi\right)  ^{\beta}}d\xi-\int_{\mathbb{H}^{n}}\left(  \left\vert
\nabla_{\mathbb{H}}u_{k}\right\vert ^{Q}+V\left(  \xi\right)  \left\vert
u_{k}\right\vert ^{Q}\right)  d\xi\rightarrow0,
\]
thus, $\int_{\mathbb{H}^{n}}\left(  \left\vert \nabla_{\mathbb{H}}%
u_{k}\right\vert ^{Q}+V\left(  \xi\right)  \left\vert u_{k}\right\vert
^{Q}\right)  d\xi\rightarrow0$, which is a contradiction, and the claim is proved.

Set $\tilde{u}_{k}=\frac{u_{k}}{\left\Vert u_{k}\right\Vert }$ and $\tilde
{u}=\frac{u}{\underset{k}{\lim}\left\Vert u_{k}\right\Vert }$. Then
$\left\Vert u_{k}\right\Vert =1$ and $\tilde{u}_{k}\rightharpoonup\tilde{u}$
weakly in $\mathcal{S}$. If $\left\Vert \tilde{u}\right\Vert =1$, we have
$\underset{k}{\lim}\left\Vert u_{k}\right\Vert =\left\Vert u\right\Vert $, and
then $u_{k}\rightarrow$ $u$ strongly in $\mathcal{S}$.

If $\left\Vert u\right\Vert <1$, \ by (\ref{c}) and (\ref{add111}) and the
fact that $J\left(  u\right)  \geq0$, one has
\[
d+o_{k}\left(  1\right)  >J\left(  u_{k}\right)  -J\left(  u\right)
\rightarrow\frac{1}{Q}\left(  \left\Vert u_{k}\right\Vert ^{Q}-\left\Vert
u\right\Vert ^{Q}\right)  ,
\]
thus,%
\[
\left\Vert u_{k}\right\Vert ^{Q}\left(  1-\left\Vert \frac{u}{\left\Vert
u_{k}\right\Vert}\right\Vert ^{Q}\right)  <Q\left(  d+o_{k}\left(
1\right)  \right)  ,
\]
that is
\[
\left\Vert u_{k}\right\Vert ^{Q/\left(  Q-1\right)  }<\frac{\frac
{\alpha_{Q,\beta}}{\alpha_{0}}+o_{k}\left(  1\right)  }{\left(  1-\left\Vert
\frac{u}{\left\Vert u_{k}\right\Vert}\right\Vert ^{Q}\right)  ^{1/\left(
Q-1\right)  }},
\]
therefore when $k$ large, we can choose some $q>1$ close to $1$ sufficiently
such that%
\begin{equation}
q\alpha_{0}\left\Vert u_{k}\right\Vert ^{Q/\left(  Q-1\right)  }<\frac
{\alpha_{Q,q\beta}}{\left(  1-\left\Vert \tilde{u}\right\Vert ^{Q}\right)
^{1/\left(  Q-1\right)  }}. \label{bound2}%
\end{equation}
By Theorem \ref{Theorem2}, $\frac{\Phi\left(  q\alpha_{0}u_{k}^{\frac{Q}{Q-1}%
}\right)  }{\rho\left(  \xi\right)  ^{q\beta}}$ is bounded in $L^{1}\left(
\mathbb{H}^{n}\right)  $.

By H\"{o}lder's inequality, combining (f1), (\ref{bound2}), (\ref{limit}),
(\ref{limite3}), we get%
\begin{equation}%
\begin{array}
[c]{c}%
\left\vert \int_{\mathbb{H}^{n}}\frac{f\left(  u_{k}\right)  \left(
u_{k}-u\right)  }{\rho\left(  \xi\right)  ^{\beta}}d\xi\right\vert \leq
c\left(  \int_{\mathbb{H}^{n}}\frac{u_{k}^{Q}}{\rho\left(  \xi\right)
^{\beta}}d\xi\right)  ^{\frac{Q-1}{Q}}\left(  \int_{\mathbb{H}^{n}}%
\frac{\left\vert u_{k}-u\right\vert ^{Q}}{\rho\left(  \xi\right)  ^{\beta}%
}d\xi\right)  ^{\frac{1}{Q}}\\
\text{ \ \ \ \ \ \ \ \ \ \ \ \ \ \ \ \ \ \ \ \ \ \ \ \ \ \ \ \ \ \ \ }%
+c\left(  \int_{\mathbb{H}^{n}}\frac{\Phi\left(  q\alpha_{0}u_{k}^{\frac
{Q}{Q-1}}\right)  }{\rho\left(  \xi\right)  ^{q\beta}}d\xi\right)
^{1/q}\left(  \int_{\mathbb{H}^{n}}\left\vert u_{k}-u\right\vert ^{q^{\prime}%
}d\xi\right)  ^{1/q^{\prime}}\rightarrow0.
\end{array}
\label{convergence}%
\end{equation}
Since $DJ\left(  u_{k}\right)  \left(  u_{k}-u\right)  \rightarrow0$, \ from
(\ref{convergence}) we derive%
\begin{equation}
\int_{\mathbb{H}^{n}}\left(  \left\vert \nabla_{\mathbb{H}}u_{k}\right\vert
^{Q-2}\nabla_{\mathbb{H}}u_{k}\nabla_{\mathbb{H}}\left(  u_{k}-u\right)
+V\left(  \xi\right)  \left\vert u_{k}\right\vert ^{Q-2}u_{k}\left(
u_{k}-u\right)  \right)  d\xi\rightarrow0. \label{11}%
\end{equation}
On the other hand, since $u_{k}\rightharpoonup u$ in $\mathcal{S}$, we have
\begin{equation}
\int_{\mathbb{H}^{n}}\left(  \left\vert \nabla_{\mathbb{H}}u\right\vert
^{Q-2}\nabla_{\mathbb{H}}u\nabla_{\mathbb{H}}\left(  u_{k}-u\right)  +V\left(
\xi\right)  \left\vert u\right\vert ^{Q-2}u\left(  u_{k}-u\right)  \right)
d\xi\rightarrow0.\ \label{12}%
\end{equation}
Combining (\ref{11}) and (\ref{12}), we obtain there is a constant $c>0$ so
that%
\[%
\begin{array}
[c]{c}%
\left\Vert u_{k}-u\right\Vert \leq c\int_{\mathbb{H}^{n}}\left(  \left\vert
\nabla_{\mathbb{H}}u_{k}\right\vert ^{Q-2}\nabla_{\mathbb{H}}u_{k}-\left\vert
\nabla_{\mathbb{H}}u\right\vert ^{Q-2}\nabla_{\mathbb{H}}u\right)
\nabla_{\mathbb{H}}\left(  u_{k}-u\right) \\
\text{ \ \ \ \ \ \ \ \ \ \ \ \ \ \ \ \ \ \ \ \ \ \ \ \ \ \ \ }+c\int
_{\mathbb{H}^{n}}V\left(  \xi\right)  \left(  \left\vert u_{k}\right\vert
^{Q-2}u_{k}-\left\vert u\right\vert ^{Q-2}u\right)  \left(  u_{k}-u\right)
d\xi\rightarrow0,
\end{array}
\]
where we have used the inequality $\left(  \left\vert a\right\vert
^{Q-2}a-\left\vert b\right\vert ^{Q-2}b\right)  \left(  a-b\right)
\geq2^{2-Q}\left\vert a-b\right\vert ^{Q}$, for all $a,b\in\mathbb{R}^{2n}$.
The proof is finished.
\end{proof}

From the proof of \cite[Lemma 5.1 and Lemma 5.2.]{Lam}, we have the following
geometric conditions of the mountain-pass theorem:

\begin{lemma}
\label{geometric}Suppose that the hypotheses of (H1) and (H2) hold. Then

(i)\bigskip\ there exists $r,\delta>0$ such that $J\left(  u\right)
\geq\delta$ if $\left\Vert u\right\Vert =r$;

(ii) there exists $e\in\mathcal{S}$ with $\left\Vert e\right\Vert >r$, such
that $J\left(  e\right)  <0$.
\end{lemma}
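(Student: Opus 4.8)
The plan is to verify the two mountain–pass conditions separately, reading everything off from the growth hypotheses (f1), (f4), (f5) and controlling the exponential term by the sharp Trudinger--Moser inequality of Lemma~\ref{Lemma2}. For (i) I would first observe that $V(\xi)\ge 1$ forces $\|u\|_{HW^{1,Q}(\mathbb H^n)}\le\|u\|$ for all $u\in\mathcal S$, so that when $\|u\|=r$ is small, $\|u\|_{HW^{1,Q}(\mathbb H^n)}$ is small too. Next, from (f5) near $t=0$ together with the consequence $F(t)\le b_2\Phi(\alpha_1 t^{Q/(Q-1)})$ of (f1) recorded above, I would deduce that for any fixed $q>Q$ there is $C_q>0$ with
\[
F(t)\le\tau\,|t|^{Q}+C_q\,|t|^{q}\,\Phi\!\left(\alpha_1 t^{\frac{Q}{Q-1}}\right),\qquad t\in\mathbb R,
\]
where $\tau$ is small enough that $\tfrac1Q-\tfrac{\tau}{\lambda_Q}>0$, which (f5) permits. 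Inserting this into $J$ and using $\int_{\mathbb H^n}|u|^Q\rho(\xi)^{-\beta}\,d\xi\le\lambda_Q^{-1}\|u\|^Q$ (the definition of $\lambda_Q$), the first two terms contribute at least $(\tfrac1Q-\tfrac{\tau}{\lambda_Q})\|u\|^Q$, a fixed positive multiple of $\|u\|^Q$. For the remaining term I would split the weight $\rho^{-\beta}=\rho^{-\beta/s}\rho^{-\beta/s'}$, apply H\"older, use the elementary bound $[\Phi(t)]^{s}\le\Phi(st)$, and then invoke Lemma~\ref{Lemma2}: writing $u=\|u\|_{HW^{1,Q}(\mathbb H^n)}\hat u$ with $\|\hat u\|_{HW^{1,Q}(\mathbb H^n)}=1$, once $r$ is small enough that $s\alpha_1\|u\|_{HW^{1,Q}(\mathbb H^n)}^{Q/(Q-1)}\le\alpha_{Q,\beta}$ the exponential factor is bounded by a constant, while $\int_{\mathbb H^n}|u|^{qs'}\rho(\xi)^{-\beta}\,d\xi\le c\|u\|^{qs'}$ follows from the ball/complement H\"older splitting used in the proof of Lemma~\ref{PS}. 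This yields $J(u)\ge c_1\|u\|^Q-c_2\|u\|^{q}$ with $q>Q$, hence $J(u)\ge\delta>0$ on $\|u\|=r$ for $r$ small, which is (i).

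For (ii) I would fix any nonnegative $u_0\in C_0^\infty(\mathbb H^n)\subset\mathcal S$ with $u_0\not\equiv0$ and test $J$ along the ray $t\mapsto tu_0$. Integrating (f4) gives $F(t)\ge\frac{C_\mu}{\mu}t^{\mu}$ for $t\ge0$, so for $t>0$
\[
J(tu_0)=\frac{t^{Q}}{Q}\|u_0\|^Q-\int_{\mathbb H^n}\frac{F(tu_0)}{\rho(\xi)^{\beta}}\,d\xi\le\frac{t^{Q}}{Q}\|u_0\|^Q-\frac{C_\mu t^{\mu}}{\mu}\int_{\mathbb H^n}\frac{u_0^{\mu}}{\rho(\xi)^{\beta}}\,d\xi .
\]
Since $\mu>Q$ and the last integral is a fixed positive number, $J(tu_0)\to-\infty$ as $t\to+\infty$; choosing $t_0$ large with $\|t_0u_0\|>r$ and $J(t_0u_0)<0$ and setting $e:=t_0u_0$ finishes (ii).

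I expect the only delicate point to be the exponential term in (i): one must keep the coefficient multiplying $u^{Q/(Q-1)}$ strictly below the sharp constant $\alpha_{Q,\beta}$ after the H\"older splitting, and this is exactly what the smallness of $r$ — via $V\ge1$, hence $\|u\|_{HW^{1,Q}(\mathbb H^n)}\le r$ — together with Lemma~\ref{Lemma2} provides. Everything else is routine bookkeeping with the sub- and super-critical powers coming from (f1), (f4) and (f5).
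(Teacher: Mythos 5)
Your proof is correct, and it is essentially the standard argument behind the paper's own treatment, which offers no proof at all but simply cites \cite[Lemmas 5.1 and 5.2]{Lam}: for (i) you combine the smallness of $F$ near zero from (f5) with a H\"older splitting of the weighted exponential term, the elementary bound $\left[\Phi(t)\right]^{s}\leq\Phi(st)$, and Lemma \ref{Lemma2} (made applicable because $V\geq1$ gives $\left\Vert u\right\Vert_{HW^{1,Q}(\mathbb{H}^{n})}\leq\left\Vert u\right\Vert=r$ small), obtaining $J(u)\geq c_{1}\left\Vert u\right\Vert^{Q}-c_{2}\left\Vert u\right\Vert^{q}$ with $q>Q$, while (ii) is the usual ray argument from (f4) with $\mu>Q$. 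The only caveat is your choice of $\tau$ with $\frac{1}{Q}-\frac{\tau}{\lambda_{Q}}>0$, which relies on reading the (typographically garbled, $k_{0}$ undefined) hypothesis (f5) as asserting $\limsup_{t\rightarrow0^{+}}QF(t)/\left\vert t\right\vert^{Q}<\lambda_{Q}$; this is the standard intended meaning and is needed for the positivity of the leading coefficient, so it is an ambiguity of the paper's statement rather than a gap in your argument.
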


Now, we define the minimax level by
\[
d_{\infty}=\underset{g\in\Gamma}{\inf}\underset{t\in\left[  0,1\right]  }%
{\max}J\left(  g\left(  t\right)  \right)  ,
\]
where $\Gamma=\left\{  g\in C\left(  \left[  0,1\right]  ,\mathcal{S}\right)
:g\left(  0\right)  =0\text{ and }g\left(  1\right)  <0\right\}  $. From Lemma
\ref{geometric}, we have $d_{\infty}>0$.

\begin{lemma}
\label{c infinit}\bigskip Under the hypotheses of (H1) and (H2). We have
$d_{\infty}<\frac{1}{Q}\left(  \frac{\alpha_{Q,\beta}}{\alpha_{0}}\right)
^{Q-1}$.
\end{lemma}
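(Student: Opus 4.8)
The plan is to exhibit an explicit test function, insert it into the definition of $d_\infty$, and bound the resulting maximum over $t\in[0,1]$ strictly below $\frac{1}{Q}\left(\frac{\alpha_{Q,\beta}}{\alpha_0}\right)^{Q-1}$. The natural candidate is a Moser-type sequence, namely the normalized concentrating functions $\omega_k$ already constructed in \eqref{add1} (suitably truncated to have compact support so that they lie in $\mathcal{S}$), scaled so that $\|\omega_k\|=1$. Since $\Gamma$ consists of all paths from $0$ to a point where $J<0$, and since for each $k$ the ray $t\mapsto t\,s_k\,\omega_k$ (with $s_k$ large enough that $J(s_k\omega_k)<0$, which is possible by (f2)) is an admissible path, we get $d_\infty\le \max_{t\ge 0} J(t\omega_k)$ for every $k$. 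It therefore suffices to show that for $k$ large this maximum is strictly less than the threshold.

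The main step is the estimate of $\max_{t\ge 0}J(t\omega_k)$. First I would use (f4), i.e.\ $f(t)\ge C_\mu t^{\mu-1}$, hence $F(t)\ge \frac{C_\mu}{\mu}t^\mu$, to bound $J(t\omega_k)$ from above by the model functional
\[
J(t\omega_k)\le \frac{t^Q}{Q}\|\omega_k\|^Q-\frac{C_\mu}{\mu}t^\mu\int_{\mathbb{H}^n}\frac{|\omega_k|^\mu}{\rho(\xi)^\beta}\,d\xi
=\frac{t^Q}{Q}-\frac{C_\mu}{\mu}t^\mu\int_{\mathbb{H}^n}\frac{|\omega_k|^\mu}{\rho(\xi)^\beta}\,d\xi .
\]
Maximizing the right-hand side in $t>0$ gives a value of the form $\left(\frac{1}{Q}-\frac{1}{\mu}\right)\left(\frac{\mu}{Q\,C_\mu}\right)^{Q/(\mu-Q)}\left(\int_{\mathbb{H}^n}\frac{|\omega_k|^\mu}{\rho(\xi)^\beta}d\xi\right)^{-Q/(\mu-Q)}$. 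Now I invoke the definition of $\lambda_\mu$ in (f4): since $\|\omega_k\|\ge\|\nabla_{\mathbb{H}}\omega_k\|_Q=1$ and one checks (using $\|\omega_k\|_{L^Q}\to 0$, exactly as in \eqref{add4}) that $\|\omega_k\|\to 1$, one gets $\int_{\mathbb{H}^n}\frac{|\omega_k|^\mu}{\rho(\xi)^\beta}d\xi\ge \frac{\|\omega_k\|^Q}{\lambda_\mu}\ge \frac{1+o(1)}{\lambda_\mu}$. Substituting this lower bound and simplifying, the maximal value is at most
\[
\left(\frac{\mu-Q}{\mu}\right)\left(\frac{\mu}{Q}\right)^{Q/(\mu-Q)}C_\mu^{-Q/(\mu-Q)}\lambda_\mu^{Q/(\mu-Q)}\big(1+o(1)\big),
\]
and the hypothesis on $C_\mu$ in (f4) is precisely the inequality guaranteeing this quantity is $<\frac{1}{Q}\left(\frac{\alpha_{Q,\beta}}{\alpha_0}\right)^{Q-1}$. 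Taking $k$ large absorbs the $o(1)$.

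The technical obstacle is twofold. First, one must ensure the test functions genuinely lie in $\mathcal{S}$ and that $\|\omega_k\|\to 1$ rather than just $\|\nabla_{\mathbb{H}}\omega_k\|_Q=1$; this is where the potential term $\int V|\omega_k|^Q$ must be controlled, using (V1) (so $V\ge 1$ on the compact support, but $V$ is bounded there by continuity) together with $\|\omega_k\|_{L^Q}\to 0$ from the explicit form of $\omega_k$ — the same computation already used in \eqref{add4}. Second, one must verify algebraically that the constant produced by the maximization matches the threshold under the stated hypothesis on $C_\mu$; this is a routine but slightly delicate exponent bookkeeping exercise, checking that the powers of $\frac{\mu-Q}{\mu}$, of $\lambda_\mu$, and of $\frac{\alpha_{Q,\beta}}{\alpha_0}$ line up exactly as in (f4). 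Once these two points are settled, the conclusion $d_\infty<\frac{1}{Q}\left(\frac{\alpha_{Q,\beta}}{\alpha_0}\right)^{Q-1}$ follows immediately by choosing $k$ sufficiently large.
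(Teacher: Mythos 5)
There is a genuine gap, and it is located exactly at the step you flag as ``routine bookkeeping.'' The definition of $\lambda_\mu$ in (f4) is an infimum,
\[
\lambda_{\mu}=\inf_{u\in\mathcal{S}\setminus 0}\frac{\|u\|^{Q}}{\int_{\mathbb{H}^{n}}\frac{|u|^{\mu}}{\rho(\xi)^{\beta}}\,d\xi},
\]
so for an \emph{arbitrary} test function it yields only
$\int_{\mathbb{H}^{n}}\frac{|u|^{\mu}}{\rho(\xi)^{\beta}}\,d\xi\le \|u\|^{Q}/\lambda_{\mu}$, i.e.\ the reverse of the inequality
$\int_{\mathbb{H}^{n}}\frac{|\omega_k|^{\mu}}{\rho(\xi)^{\beta}}\,d\xi\ge \|\omega_k\|^{Q}/\lambda_{\mu}$ that your argument needs. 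Worse, the concentrating Moser functions $\omega_k$ of \eqref{add1} are the worst possible choice here: by the same computation as \eqref{add4} one has $\|\omega_k\|_{L^{\mu}}\to 0$, hence $\int_{\mathbb{H}^{n}}\frac{|\omega_k|^{\mu}}{\rho(\xi)^{\beta}}\,d\xi\to 0$, and since the value of $\max_{t>0}\bigl(\frac{t^{Q}}{Q}\|\omega_k\|^{Q}-\frac{C_{\mu}}{\mu}t^{\mu}\int\frac{|\omega_k|^{\mu}}{\rho^{\beta}}\bigr)$ is proportional to $\bigl(\int\frac{|\omega_k|^{\mu}}{\rho^{\beta}}\bigr)^{-Q/(\mu-Q)}$, this upper bound for $\max_{t\ge 0}J(t\omega_k)$ blows up to $+\infty$ as $k\to\infty$. (Also note that the inequality $F\ge\frac{C_\mu}{\mu}t^\mu$ gives an upper bound for $J$ only along the ray, which is fine, but it cannot compensate for a vanishing $L^\mu$-weight.) So the scheme cannot be repaired by ``taking $k$ large''; it fails structurally because the estimate needs a function whose $\|\cdot\|^{Q}$-to-$\int|\cdot|^{\mu}\rho^{-\beta}$ ratio is (nearly) \emph{optimal}, not a concentrating sequence for the Trudinger--Moser functional.

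The paper's proof does precisely this: it takes a minimizing sequence $\{v_k\}$ for $\lambda_{\mu}$ normalized by $\int_{\mathbb{H}^{n}}\frac{|v_k|^{\mu}}{\rho(\xi)^{\beta}}\,d\xi=1$, uses the compact embedding $\mathcal{S}\hookrightarrow L^{q}(\mathbb{H}^{n})$ (together with the argument giving \eqref{limite3}) and weak lower semicontinuity to produce a nonnegative minimizer $v_{0}$ with $\|v_{0}\|^{Q}\le\lambda_{\mu}$ and $\int\frac{|v_0|^{\mu}}{\rho^{\beta}}=1$, and then runs your maximization along the ray $t\mapsto tv_{0}$ (made into an admissible path in $\Gamma$ exactly as you propose, via (f4) forcing $J(tv_0)\to-\infty$). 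This yields
\[
d_{\infty}\le \max_{t>0}\Bigl(\frac{t^{Q}}{Q}\lambda_{\mu}-\frac{C_{\mu}}{\mu}t^{\mu}\Bigr)=\frac{\mu-Q}{Q\mu}\,\frac{\lambda_{\mu}^{\mu/(\mu-Q)}}{C_{\mu}^{Q/(\mu-Q)}},
\]
and the lower bound on $C_{\mu}$ in (f4) is calibrated to make this quantity strictly less than $\frac{1}{Q}\bigl(\frac{\alpha_{Q,\beta}}{\alpha_{0}}\bigr)^{Q-1}$. Thus the essential missing ingredient in your proposal is the attainment (or near-attainment) of $\lambda_{\mu}$ as the source of the test path; with $v_{0}$ in place of $\omega_k$ your computation of the maximum and of the constant matching is correct.
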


\begin{proof}
Let $\left\{  v_{k}\right\}  $ be in $\mathcal{S}$ with $\int_{\mathbb{H}^{n}%
}\frac{\left\vert v_{k}\right\vert ^{\mu}}{\rho\left(  \xi\right)  ^{\beta}%
}d\xi=1$ and $\left\Vert v_{k}\right\Vert ^{Q}\rightarrow\lambda_{\mu}$. Then
$\left\{  v_{k}\right\}  $ is bounded in $\mathcal{S}$. Using the compactness
of embedding \ $\mathcal{S}\hookrightarrow L^{q}\left(  \mathbb{H}^{n}\right)
$ for all $q\geq Q$, up to a subsequence, we have%
\begin{align}
v_{k} &  \rightharpoonup v_{0}\text{ weakly in }\mathcal{S},\nonumber\\
v_{k} &  \rightarrow v_{0}\text{ strongly in }L^{q}\left(  \mathbb{H}%
^{n}\right)  \text{ for all }q\in\left[  Q,\infty\right)  \label{limt2}\\
v_{k} &  \rightarrow v_{0}\text{ for almost all }\xi\in\mathbb{H}^{n}%
\text{.}\nonumber
\end{align}
(\ref{limite3}) implies that $\int_{\mathbb{H}^{n}}\frac{\left\vert
v_{0}\right\vert ^{\mu}}{\rho\left(  \xi\right)  ^{\beta}}d\xi=\underset
{k}{\lim}\int_{\mathbb{H}^{n}}\frac{\left\vert v_{k}\right\vert ^{\mu}}%
{\rho\left(  \xi\right)  ^{\beta}}d\xi=1$. By the semicontinuity of the norm
$\left\Vert \cdot\right\Vert $, we infer that
\[
\left\Vert v_{0}\right\Vert ^{Q}\leq\underset{k}{\lim\inf}\left\Vert
v_{k}\right\Vert ^{Q}=\lambda_{\mu}\text{,}%
\]
thus $\lambda_{\mu}$ is attained by $v_{0}$, we may assume that $v_{0}\geq0$.

From (\ref{f4}), we know that $F\left(  t\right)  \geq\frac{C_{\mu}}{\mu
}t^{\mu}$ for some $\mu>Q$. Hence,
\begin{align*}
J\left(  tv_{0}\right)   &  \leq\frac{t^{Q}}{Q}\int_{\mathbb{H}^{n}}\left(
\left\vert \nabla_{\mathbb{H}}v_{0}\right\vert ^{Q}+V\left(  \xi\right)
\left\vert v_{0}\right\vert ^{Q}\right)  d\xi-\frac{C_{\mu}t^{\mu}}{\mu}%
\int_{\mathbb{H}^{n}}\frac{v_{0}^{\mu}}{\rho\left(  \xi\right)  ^{\beta}}%
d\xi\\
&  \rightarrow-\infty,\text{ as }t\rightarrow\infty\text{.}%
\end{align*}
Setting $\tilde{v}_{0}\left(  t\right)  =tt_{0}v_{0}$ with $t_{0}$
sufficiently large, then $\tilde{v}_{0}\left(  t\right)  \in\Gamma$. By (f4),
we have
\begin{align*}
d_{\infty} &  \leq\underset{t\in\left[  0,1\right]  }{\max}J\left(  \tilde
{v}_{0}\left(  t\right)  \right)  \leq\underset{t\in\left[  0,1\right]  }%
{\max}\left(  \frac{\left(  t_{0}t\right)  ^{Q}}{Q}\left\Vert v_{0}\right\Vert
^{Q}-\frac{C_{\mu}\left(  t_{0}t\right)  ^{\mu}}{\mu}\int_{\mathbb{H}^{n}%
}\frac{v_{0}^{\mu}}{\rho\left(  \xi\right)  ^{\beta}}d\xi\right)  \\
&  \leq\underset{t>0}{\max}\left(  \frac{t^{Q}}{Q}\left\Vert v_{0}\right\Vert
^{Q}-\frac{C_{\mu}}{\mu}t^{\mu}\int_{\mathbb{H}^{n}}\frac{v_{0}^{\mu}}%
{\rho\left(  \xi\right)  ^{\beta}}d\xi\right)  \\
&  \leq\underset{t>0}{\max}\left(  \frac{t^{Q}}{Q}\lambda_{\mu}-\frac{C_{\mu}%
}{\mu}t^{\mu}\right)  =\frac{\mu-Q}{Q\mu}\frac{\lambda_{\mu}^{\mu/\left(
\mu-Q\right)  }}{C_{\mu}^{Q/\left(  \mu-Q\right)  }}\\
&  <\frac{1}{Q}\left(  \frac{\alpha_{Q,\beta}}{\alpha_{0}}\right)  ^{Q-1},
\end{align*}
and this completes the proof.
\end{proof}

Finally, we come to the

\begin{proof}
[Proof of Theorem \ref{ground state}]Let $\left\{  u_{k}\right\}  $ be a
sequence in $\mathcal{S}$ such that
\[
J\left(  u_{k}\right)  \rightarrow d_{\infty}\text{ }%
\]
and $DJ\left(  u_{k}\right)  \rightarrow0$. By Lemmas \ref{PS} and
\ref{c infinit}, the sequence $\left\{  u_{k}\right\}  $ converges weakly to a
weak solution $u_{0}$ of (\ref{eq}). \ \ Now, we show $u_{0}>0$ in
$\mathbb{H}^{n}$.

Set $u_{0+}:=\max\left\{  u_{0},0\right\}  $ and $u_{0-}:=\max\left\{
-u_{0},0\right\}  $. \ Since $u_{0}$ satisfies $DJ\left(  u_{0}\right)  =0$,
we have $DJ\left(  u_{0}\right)  u_{0-}=0$, that is,
\[
\left\Vert \ u_{0-}\right\Vert ^{Q}-\int_{\mathbb{H}^{n}}\frac{f\left(
u_{0}\right)  u_{0-}}{\rho\left(  \xi\right)  ^{\beta}}d\xi=0.
\]
On the other hand, from (f6) we have $\int_{\mathbb{H}^{n}}\frac{f\left(
u_{0}\right)  u_{0-}}{\rho\left(  \xi\right)  ^{\beta}}d\xi=0$, and then
$\left\Vert \ u_{0-}\right\Vert ^{Q}=0$. Therefore, $u_{0}\geq0$ on
$\mathbb{H}^{n}$. From $J\left(  u_{0}\right)  =d_{\infty}>0$, we know
$u_{0}\ $is positive on $\mathbb{H}^{n}$.

Now, let%
\[
M_{\infty}:=\underset{u\in\mathcal{P}\backslash0}{\inf}J\left(  u\right)  ,
\]
where $\mathcal{P}:=\left\{  u\in\mathcal{S}:DJ\left(  u\right)  =0\right\}  $.

In order to show that $u_{0}$ is a ground state solution of (\ref{eq}), we
only need to prove $d_{\infty}$ $\leq M_{\infty}$. For any $u\in
$\ $\mathcal{P}\backslash0$, we define $m\left(  t\right)  $ by $m\left(
t\right)  =J\left(  tu\right)  $. Since $J\in C^{1}\left(  \mathcal{S}%
,\mathbb{R}\right)  $, we have $m\left(  t\right)  $ is differentiable and
\[
m^{\prime}\left(  t\right)  =DJ\left(  tu\right)  u=t^{Q-1}\left\Vert
\ u\right\Vert ^{Q}-\int_{\mathbb{H}^{n}}\frac{f\left(  tu\right)  u}%
{\rho\left(  \xi\right)  ^{\beta}}d\xi,
\]
for any $t>0$.

From $DJ\left(  u\right)  u=0$, we derive%
\[
m^{\prime}\left(  t\right)  =t^{Q-1}\int_{\mathbb{H}^{n}}\left(
\frac{f\left(  u\right)  }{u^{Q-1}}-\frac{f\left(  tu\right)  }{\left(
tu\right)  ^{Q-1}}\right)  \frac{u^{Q}}{\rho\left(  \xi\right)  ^{\beta}}%
d\xi.
\]
By (f2), we know $\frac{f\left(  t\right)  }{t^{Q-1}}$ is increasing for all $s>0$.  From this and the fact $m^{\prime}\left(  1\right)  =0$, we know
$m^{\prime}\left(  t\right)  >0$ if $t\in\left(  0,1\right)  $, and
$m^{\prime}\left(  t\right)  <0$ if $t\in\left(  1,\infty\right)  $. Thus,
$J\left(  u\right)  =\underset{t\geq0}{\max}J\left(  tu\right)  $.

Setting $\tilde{u}\left(  t\right)  =tt_{0}u$ with $t_{0}$ sufficiently large,
we get $\tilde{u}\left(  t\right)  \in\Gamma$, and then%
\[
d_{\infty}\leq\underset{t\in\left[  0,1\right]  }{\max}J\left(  \tilde
{u}\left(  t\right)  \right)  \leq\underset{t\geq0}{\max}J\left(  tu\right)
=J\left(  u\right)  .
\]
Therefore, $d_{\infty}$ $\leq M_{\infty}$. The proof is completed.
\end{proof}

{\bf Acknowledgement} The results of this paper were presented 
by the third author at the Workshop in Fourier Analysis in Sanya Mathematical Forum in August, 2016 and by 
 the second author at the AMS special session on  Geometric Aspects of Harmonic Analysis in Maine in September, 2016.

\end{document}